\newtheorem{theorem}{Theorem}
\declaretheorem[style=thmbluebox,name=Theorem,numbered=no]{theorem*}
\declaretheorem[style=thmbluebox,name=Proposition,sibling=theorem]{proposition}
\declaretheorem[style=thmbluebox,name=Lemma,sibling=theorem]{lemma}
\declaretheorem[style=thmbluebox,name=Corollary,sibling=theorem]{corollary}
\declaretheorem[style=thmbluebox,name=Lemma,numbered=no]{lemma*}
\declaretheorem[style=thmbluebox, sibling=theorem]{definition}
\title{Ratios of Naruse-Newton Coefficients Obtained from Descent Polynomials}
\author{Andrew Cai\footnote{Clements High School, Sugar Land, TX. \newline \indent \space \space Author's Email Address: andrewcai31@gmail.com}}
\date{\today}
\definecolor{light-gray}{gray}{0.75}
\begin{document}

\maketitle

\begin{abstract}
We study Naruse-Newton coefficients, which are obtained from expanding descent polynomials in a Newton basis introduced by Jiradilok and McConville. These coefficients $C_0, C_1, \ldots$ form an integer sequence associated to each finite set of positive integers. For fixed nonnegative integers $a<b$, we examine the set $R_{a, b}$ of all ratios $\frac{C_a}{C_b}$ over finite sets of positive integers. We characterize finite sets for which $\frac{C_a}{C_b}$ is minimized and provide a construction to prove $R_{a, b}$ is unbounded above. We use this construction to obtain results on the closure of $R_{a, b}$. We also examine properties of Naruse-Newton coefficients associated with doubleton sets, such as unimodality and log-concavity. Finally, we find an explicit formula for all ratios $\frac{C_a}{C_b}$ of Naruse-Newton coefficients associated with ribbons of staircase shape.
\end{abstract}

\section{Introduction}
For a positive integer $n$, the \emph{descent set} of a permutation $\pi \in S_n$ is defined as the set of positions $i \in [n-1] = \{1, 2, \ldots, n-1\}$ such that $\pi_i>\pi_{i+1}$.  For a fixed set of positive integers $I$, the univariate \emph{descent polynomial} $d_I$ is the unique polynomial such that for all positive integers $n$ with $n > \max(I \cup \{0\})$, the value $d_I(n)$ equals the number of permutations of $[n]$ such that $I$ is the permutation's descent set. The polynomiality of $d_I(n)$ was first proven by MacMahon \cite{mcm}. 

Jiradilok and McConville \cite{jm19} show a bijection between descent sets as subsets of $[n-1]$  and ribbons (a class of Young diagrams) with exactly $n$ cells. As described in Section 2 below, this bijection equates the set of permutations of $[n]$ with a specific descent set to the set of skew tableaux of a specific skew shape. This allows Naruse's formula, discovered in \cite{naruse}, to be applied. Naruse's formula extends upon Frame, Robinson, and Thrall's Hook Length Formula \cite{frt} to determine the number of skew tableaux of skew shape $\lambda/\mu$. Applying this formula results in a simple expression for $d_I(n)$ as a summation of products of hook lengths across excited diagrams. 

We write each descent polynomial through the Naruse-Newton basis (which has coefficients $C_i$) using the expression for $d_I(n)$ generated by Naruse's formula.  Jiradilok and McConville consider ratios between these \emph{Naruse-Newton coefficients} and prove that the minimum ratio of two coefficients $C_a$ and $C_b$ for $b>a$ is $\frac{a!}{b!}$ in Corollary 3.6 of \cite{jm19}. Results regarding these coefficients are used in bounding roots of descent polynomials, shown in \cite{jm19}'s proof of Conjecture 4.3 of \cite{dl}. 


We first extend upon Corollary 3.6 of \cite{jm19} in Proposition \ref{prop:strong-3.6}, and show it remains true for one additional index. We use this corollary to prove Theorem \ref{thm:num-min}, which finds all $a$ and $b$ for fixed $I$ such that $\frac{C_a}{C_b}$ is equal to the aforementioned minimum ratio. Following this result is Corollary \ref{cor:min-char}, which for nonnegative integers $b>a$, characterizes all descent sets in which the minimum ratio between $C_a$ and $C_b$ is attained.

For nonnegative integers $b>a$, we examine $R_{a, b}$, defined as the set of all ratios $\frac{C_a}{C_b}$ over all descent sets.  In Theorem \ref{thm:no-max}, we show that this set is unbounded above, and provide a construction for which $\frac{C_a}{C_b}$ approaches infinity. This construction is used to prove that every point in $R_{a, b}$ is the limit of a convergent sequence of pairwise distinct points in $R_{a, b}$ in Corollary \ref{cor:closure}. Furthermore, for a nonnegative integer $a$ and fixed descent set with $s+1$ Naruse-Newton coefficients, we define $T_{a}$ as the set of all possible ratios $\frac{C_{s-a-1}}{C_{s-a}}$. We prove that the closure of $T_a$ is the set of nonnegative real numbers, shown in our Theorem \ref{thm:behind-closure}.

We also examine specific sequences of Naruse-Newton coefficients, namely those of two-element descent sets and those associated with ribbons of staircase shape. We find all unimodal and log-concave sequences of Naruse-Newton coefficients of two-element descent sets in Corollary \ref{cor:two-uni} and Corollary \ref{cor:two-log}, respectively. We then prove in Theorem \ref{thm:tripoly} that for all Naruse-Newton sequences associated with a ribbon of staircase shape and all nonnegative integers $i$, the ratio $\frac{C_{a-i}}{C_a}$ is a polynomial in $a$ of degree at most $i$. This lends to an explicit formula for all ratios of Naruse-Newton coefficients for ribbons of staircase shape. The proof of Theorem \ref{thm:tripoly} also incorporates a linear algebraic result as stated in Lemma \ref{lem:matrix}. In particular, we find a polynomial expression for the determinant of a specific matrix in two variables.

This paper is organized as follows. In Section 2, we introduce descent sets and Young diagrams, and detail the relation between the two. We also apply Naruse's formula to determine a hook length-based formula for $d_I(n)$, following the work of \cite{jm19}. Then, in Section 3, we define the Naruse-Newton basis and Naruse-Newton coefficients. In Section 4, we find all $a$ and $b$ for fixed $I$ such that $\frac{C_a}{C_b} = \frac{a!}{b!}$. We then characterize descent sets for fixed $a$ and $b$ in which $\frac{C_a}{C_b} = \frac{a!}{b!}$. In Section 5, we prove that $R_{a, b}$ lacks an upper bound, and examine the closures of $R_{a, b}$ and $T_a$. In Section 6, we examine properties of Naruse-Newton coefficients of doubletons, or descent sets with two elements. Finally, in Section 7, we examine properties of Naruse-Newton coefficients corresponding to ribbons of staircase shape.
\bigskip

\section{Preliminaries}

This section reviews descent polynomials, Young tableaux, Naruse's formula, the correspondence between descent sets and ribbons, and excitation factors. We develop our preliminaries in a similar fashion to \cite{jm19}.

Define a \emph{permutation} to be a bijection $\pi:[n] \rightarrow [n]$, where $[n] = \{1, 2, \ldots, n\}$. We use the one-line notation $\pi = \pi_1 \cdots \pi_n$ to mean $\pi(i) = \pi_i$. For each permutation $\pi$, define the \emph{descent set} of $\pi$ to be the set of $i$ in $[n-1]$ such that $\pi_i>\pi_{i+1}$.  Then, for each finite set $I$ of positive integers, define the \emph{descent polynomial} $d_I$ to be the unique univariate polynomial such that for all integers $n > \mathrm{max}(I \cup \{0\})$, the value $d_I(n)$ is equal to the number of permutations of $[n]$ with descent set $I$. MacMahon proved $d_I$ is a polynomial of degree $\mathrm{max}(I \cup \{0\})$ using the Principle of Inclusion-Exclusion \cite{mcm}.  

\begin{figure}
\centering
\begin{tikzpicture}
\begin{scope}
\draw (-3/4, 9/4) -- (-3/4, -3/4);
\draw (0, 9/4) -- (0, -3/4);
\draw (3/4, 9/4) -- (3/4, 0);
\draw (3/2, 9/4) -- (3/2, 3/4);
\draw (-3/4, 9/4) -- (3/2, 9/4);
\draw (-3/4, 3/2) -- (3/2, 3/2);
\draw (-3/4, 3/4) -- (3/2, 3/4);
\draw (-3/4, 0) -- (3/4, 0);
\draw (-3/4, -3/4) -- (0, -3/4);
\node at (-3/8, 15/8) {$c_{1, 1}$};
\node at (3/8, 15/8) {$c_{1, 2}$};
\node at (9/8, 15/8) {$c_{1, 3}$};
\node at (-3/8, 9/8) {$c_{2, 1}$};
\node at (3/8, 9/8) {$c_{2, 2}$};
\node at (9/8, 9/8) {$c_{2, 3}$};
\node at (-3/8, 3/8) {$c_{3, 1}$};
\node at (3/8, 3/8) {$c_{3, 2}$};
\node at (-3/8, -3/8) {$c_{4, 1}$};
\end{scope}

\begin{scope}[xshift=3.5cm]
\draw (-3/4, 9/4) -- (-3/4, -3/4);
\draw (0, 9/4) -- (0, -3/4);
\draw (3/4, 9/4) -- (3/4, 0);
\draw (3/2, 9/4) -- (3/2, 3/4);
\draw (-3/4, 9/4) -- (3/2, 9/4);
\draw (-3/4, 3/2) -- (3/2, 3/2);
\draw (-3/4, 3/4) -- (3/2, 3/4);
\draw (-3/4, 0) -- (3/4, 0);
\draw (-3/4, -3/4) -- (0, -3/4);
\node at (-3/8, 15/8) {\large 1};
\node at (3/8, 15/8) {\large 3};
\node at (9/8, 15/8) {\large 7};
\node at (-3/8, 9/8) {\large 2};
\node at (3/8, 9/8) {\large 5};
\node at (9/8, 9/8) {\large 8};
\node at (-3/8, 3/8) {\large 4};
\node at (3/8, 3/8) {\large 9};
\node at (-3/8, -3/8) {\large 6};
\end{scope}

\end{tikzpicture}
\caption{A straight Young diagram with labeled cells and a straight Young tableau, both of shape $(3, 3, 2, 1)$.}
\label{fig:youngdiagram}
\end{figure}

A \emph{partition} $\lambda$ is a weakly decreasing integer sequence $\lambda = (\lambda_1, \lambda_2, \ldots)$, with $\lambda_i \ge 0$ for all positive integers $i$, such that there exists an $N$ with $\lambda_N = 0$. For each partition $\lambda$, the \emph{straight Young diagram} $\mathbb{D}(\lambda)$ of shape $\lambda$ is defined as the left-justified collection of cells containing $\lambda_i$ cells in its $i$th row, for positive integer $i$. We use the English notation, where the row corresponding to $\lambda_1$ is at the top of the diagram. The \emph{conjugate partition} $\lambda'$ of the partition $\lambda$ is defined such that there are $\lambda'_i$ cells in the $i$th column of $\mathbb{D}(\lambda)$. Let $|\lambda|$ denote the number of cells in $\mathbb{D}(\lambda)$. Denote $c_{i, j}$ as the cell in the $i$th row and $j$th column. In Figure \ref{fig:youngdiagram}, we provide a straight Young diagram with $c_{i, j}$s labeled and a straight Young tableau, both of shape $(3, 3, 2, 1)$.

For a partition $\lambda$ with $n = |\lambda|$, define a \emph{straight Young tableau} of shape $\lambda$ to be a bijective filling of the cells of the Young diagram $\mathbb{D}(\lambda)$ with $[n]$, such that the value of each cell is less than those of all cells below it and to its right. 

A formula for $f^\lambda$, defined as the number of Young tableaux of shape $\lambda$, was discovered by Frame, Robinson, and Thrall in 1954 \cite{frt}. The elegant formula is based upon the product of hook lengths of cells, where the \emph{hook length} of a cell $c$, denoted as $h_\lambda(c)$, is equal to the number of cells weakly below or weakly to the right of $c$ (including $c$ itself). 

\begin{theorem}[Hook Length Formula \cite{frt}] \label{thm:hlf} 
For a partition $\lambda$ with $n = |\lambda|$,
$$f^\lambda = n!\displaystyle\prod_{c \in \mathbb{D}(\lambda)} \frac{1}{h_\lambda(c)}.$$
\end{theorem}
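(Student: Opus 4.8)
The plan is to prove the formula by induction on $n = |\lambda|$, comparing the claimed quantity to $f^\lambda$ through the recurrence each side satisfies under removal of a corner cell. Write $e(\lambda) = n! \prod_{c \in \mathbb{D}(\lambda)} h_\lambda(c)^{-1}$ for the candidate value; the goal is to show $f^\lambda = e(\lambda)$. The count $f^\lambda$ satisfies an obvious recurrence: since entries increase down rows and across columns, in any Young tableau of shape $\lambda$ the entry $n$ must occupy a \emph{corner} (a cell with no cell below it or to its right, whose removal leaves a valid Young diagram), and deleting that cell gives a bijection with tableaux of the smaller shape. Hence $f^\lambda = \sum_{c} f^{\lambda \setminus c}$, summed over removable corners $c$. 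By the inductive hypothesis $f^{\lambda\setminus c} = e(\lambda \setminus c)$, so it suffices to prove that $e$ obeys the same recurrence, i.e.
\begin{equation*}
\sum_{c \text{ corner}} \frac{e(\lambda \setminus c)}{e(\lambda)} = 1.
\end{equation*}

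First I would make the summands explicit. If $c = c_{r,s}$ is a corner, then passing from $\lambda$ to $\lambda \setminus c$ decreases each hook length by exactly one along the cells of row $r$ strictly left of $c$ and of column $s$ strictly above $c$, and leaves every other hook length unchanged. Writing $h_j = h_\lambda(c_{r,j})$ and $v_i = h_\lambda(c_{i,s})$, a short calculation gives
\begin{equation*}
\frac{e(\lambda\setminus c)}{e(\lambda)} = \frac{1}{n} \prod_{j < s}\left(1 + \frac{1}{h_j - 1}\right)\prod_{i < r}\left(1 + \frac{1}{v_i - 1}\right).
\end{equation*}
The task is then to show these quantities, one per corner, sum to $1$.

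To prove the identity I would use the probabilistic argument of Greene, Nijenhuis, and Wilf: the \emph{hook walk}. Choose a starting cell of $\mathbb{D}(\lambda)$ uniformly at random, and repeatedly jump from the current cell to a uniformly chosen other cell in its hook, stopping upon reaching a corner. The factor $\tfrac{1}{n}$ above is the probability of the uniform start, and expanding the two products over subsets of the arm and leg cells should reproduce precisely the probabilities of the individual trajectories terminating at $c$. Summing over all corners then accounts for every walk, which terminates with probability $1$, giving the identity.

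The main obstacle is the combinatorial heart of this step: verifying that expanding $\prod_{j<s}(1 + (h_j - 1)^{-1})\prod_{i<r}(1 + (v_i-1)^{-1})$ matches the sum of hook-walk path probabilities ending at $c_{r,s}$. This requires showing that the probability of a fixed trajectory factors as a product of conditional step probabilities that telescope against the $(h_j - 1)^{-1}$ and $(v_i-1)^{-1}$ terms — equivalently, an auxiliary induction on the length of the walk establishing that the conditional law of where the walk lands, given its sequence of horizontal and vertical moves, has exactly the required product form. Once this lemma is in hand, the summation and the main induction close immediately.
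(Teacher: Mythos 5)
The paper offers no proof of this statement at all: it is quoted as background from Frame, Robinson, and Thrall \cite{frt}, and is used downstream only as an input to Naruse's formula, so there is no internal argument to compare yours against. Judged on its own merits, your outline is the standard Greene--Nijenhuis--Wilf hook-walk proof, and it is sound. The reduction to showing that $e$ satisfies the corner recurrence is correct, and your ratio computation is right: the corner cell itself has hook length $1$, every hook strictly above it in its column or strictly left of it in its row drops by exactly $1$, and all other hooks are untouched, which yields the factor $\frac{1}{n}\prod_{j<s}\left(1+\frac{1}{h_j-1}\right)\prod_{i<r}\left(1+\frac{1}{v_i-1}\right)$. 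The lemma you defer as the ``main obstacle'' --- that expanding these products over subsets of the arm and leg cells reproduces the total probability of hook walks terminating at $c_{r,s}$ --- is exactly the GNW trajectory lemma, and the induction on walk length you sketch does close: conditioning on whether the first step changes the row or the column, the inductive step reduces to the identity $\bigl(h_\lambda(c_{a_1,\beta})-1\bigr)+\bigl(h_\lambda(c_{\alpha,b_1})-1\bigr)=h_\lambda(c_{a_1,b_1})-1$, which holds precisely because $(\alpha,\beta)$ is a corner, so that $\lambda_\alpha=\beta$ and $\lambda'_\beta=\alpha$. With that identity spelled out, your plan is complete and correct; it simply supplies a proof where the paper deliberately supplies a citation.
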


Naruse's Formula generalizes the Hook Length Formula, considering skew Young tableaux rather than straight Young tableaux. However, before we introduce Naruse's formula, we must introduce a few structures that are used throughout this paper.

For two partitions $\lambda$ and $\mu$ such that $\lambda_i\ge \mu_i$ for all positive integers $i$, define the \emph{skew Young diagram} $\mathbb{D}(\lambda/\mu)$ of shape $\lambda/\mu$ to be the collection of cells in $\mathbb{D}(\lambda)$ that are not in $\mathbb{D}(\mu)$.

For a skew Young diagram $\mathbb{D}(\lambda/\mu)$ with $n = |\lambda|-|\mu|$, define a \emph{skew Young tableau} to be a bijective filling of the cells of $\mathbb{D}(\lambda/\mu)$ with $[n]$, such that the value of each cell is less than those of all cells below it and to its right. This is akin to a straight Young tableau, but involving a skew Young diagram rather than a straight Young diagram. Examples of skew Young tableaux of shape $(3, 3, 2, 1)/(2, 1)$ are shown in Figure \ref{fig:ribbon1}.

Naruse discovered a formula for the number of skew Young tableaux of shape $\lambda/\mu$ similar to the Hook Length Formula \cite{naruse}, which involves structures known as excited diagrams. For a skew Young diagram of shape $\lambda/\mu$, an \emph{excited diagram} is a subset of cells $D \subseteq \mathbb{D}(\lambda)$ characterized by repeated applications of a particular operation, described in the following sentence, on the cells in $\mathbb{D(\mu)}$. This particular operation can be applied to an excited diagram $D$ by replacing a cell $c_{i, j} \in D$ with cell $c_{i+1, j+1}$ if and only if $c_{i+1, j+1} \in \mathbb{D(\lambda)}$ and $\{c_{i, j+1}, c_{i+1, j}, c_{i+1, j+1}\} \cap D = \varnothing$. For example, there are five excited diagrams of the skew Young diagram of shape $(3, 3, 2, 1)/(2, 1)$, as shown in Figure \ref{fig:ribbon2}.

\begin{figure}
\centering
\begin{tikzpicture}
\begin{scope}
\draw[draw=black, fill=gray] (-1/2, 3/2) rectangle (0, 1);
\draw[draw=black, fill=gray] (1/2, 3/2) rectangle (0, 1);
\draw (1, 3/2) rectangle (1/2, 1);
\node at (3/4, 5/4) {4};
\draw[draw=black, fill=gray] (-1/2, 1) rectangle (0, 1/2);
\draw (0, 1) rectangle (1/2, 1/2);
\draw (1, 1) rectangle (1/2, 1/2);
\node at (3/4, 3/4) {6};
\node at (1/4, 3/4) {1};
\draw (-1/2, 1/2) rectangle (0, 0);
\draw (1/2, 1/2) rectangle (0, 0);
\node at (1/4, 1/4) {3};
\node at (-1/4, 1/4) {2};
\draw (-1/2, 0) rectangle (0, -1/2);
\node at (-1/4, -1/4) {5};
\end{scope}

\begin{scope}[xshift=2.5cm]
\draw[draw=black, fill=gray] (-1/2, 3/2) rectangle (0, 1);
\draw[draw=black, fill=gray] (1/2, 3/2) rectangle (0, 1);
\draw (1, 3/2) rectangle (1/2, 1);
\node at (3/4, 5/4) {3};
\draw[draw=black, fill=gray] (-1/2, 1) rectangle (0, 1/2);
\draw (0, 1) rectangle (1/2, 1/2);
\draw (1, 1) rectangle (1/2, 1/2);
\node at (3/4, 3/4) {5};
\node at (1/4, 3/4) {2};
\draw (-1/2, 1/2) rectangle (0, 0);
\draw (1/2, 1/2) rectangle (0, 0);
\node at (1/4, 1/4) {6};
\node at (-1/4, 1/4) {1};
\draw (-1/2, 0) rectangle (0, -1/2);
\node at (-1/4, -1/4) {4};
\end{scope}

\begin{scope}[xshift=5cm]
\draw[draw=black, fill=gray] (-1/2, 3/2) rectangle (0, 1);
\draw[draw=black, fill=gray] (1/2, 3/2) rectangle (0, 1);
\draw (1, 3/2) rectangle (1/2, 1);
\node at (3/4, 5/4) {5};
\draw[draw=black, fill=gray] (-1/2, 1) rectangle (0, 1/2);
\draw (0, 1) rectangle (1/2, 1/2);
\draw (1, 1) rectangle (1/2, 1/2);
\node at (3/4, 3/4) {6};
\node at (1/4, 3/4) {3};
\draw (-1/2, 1/2) rectangle (0, 0);
\draw (1/2, 1/2) rectangle (0, 0);
\node at (1/4, 1/4) {4};
\node at (-1/4, 1/4) {1};
\draw (-1/2, 0) rectangle (0, -1/2);
\node at (-1/4, -1/4) {2};
\end{scope}

\end{tikzpicture}
\caption{$3$ of the $61$ skew tableaux of shape $(3, 3, 2, 1)/(2, 1)$.}
\label{fig:ribbon1}
\end{figure}

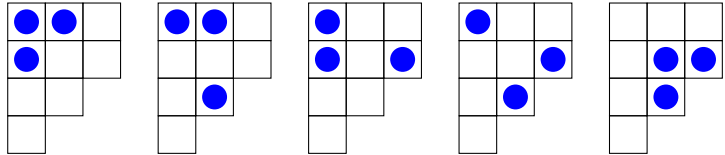
\begin{figure}
\centering
\begin{tikzpicture}
\begin{scope}
\draw (-1/2, 3/2) rectangle (0, 1);
\draw (1/2, 3/2) rectangle (0, 1);
\draw (1, 3/2) rectangle (1/2, 1);
\draw (-1/2, 1) rectangle (0, 1/2);
\draw (0, 1) rectangle (1/2, 1/2);
\draw (1, 1) rectangle (1/2, 1/2);
\draw (-1/2, 1/2) rectangle (0, 0);
\draw (1/2, 1/2) rectangle (0, 0);
\draw (-1/2, 0) rectangle (0, -1/2);
\node[shape=circle, fill=blue] at (-1/4, 5/4){};
\node[shape=circle, fill=blue] at (1/4, 5/4){};
\node[shape=circle, fill=blue] at (-1/4, 3/4){};
\end{scope}

\begin{scope}[xshift=2cm]
\draw (-1/2, 3/2) rectangle (0, 1);
\draw (1/2, 3/2) rectangle (0, 1);
\draw (1, 3/2) rectangle (1/2, 1);
\draw (-1/2, 1) rectangle (0, 1/2);
\draw (0, 1) rectangle (1/2, 1/2);
\draw (1, 1) rectangle (1/2, 1/2);
\draw (-1/2, 1/2) rectangle (0, 0);
\draw (1/2, 1/2) rectangle (0, 0);
\draw (-1/2, 0) rectangle (0, -1/2);
\node[shape=circle, fill=blue] at (-1/4, 5/4){};
\node[shape=circle, fill=blue] at (1/4, 5/4){};
\node[shape=circle, fill=blue] at (1/4, 1/4){};
\end{scope}

\begin{scope}[xshift=4cm]
\draw (-1/2, 3/2) rectangle (0, 1);
\draw (1/2, 3/2) rectangle (0, 1);
\draw (1, 3/2) rectangle (1/2, 1);
\draw (-1/2, 1) rectangle (0, 1/2);
\draw (0, 1) rectangle (1/2, 1/2);
\draw (1, 1) rectangle (1/2, 1/2);
\draw (-1/2, 1/2) rectangle (0, 0);
\draw (1/2, 1/2) rectangle (0, 0);
\draw (-1/2, 0) rectangle (0, -1/2);
\node[shape=circle, fill=blue] at (-1/4, 5/4){};
\node[shape=circle, fill=blue] at (3/4, 3/4){};
\node[shape=circle, fill=blue] at (-1/4, 3/4){};
\end{scope}

\begin{scope}[xshift=6cm]
\draw (-1/2, 3/2) rectangle (0, 1);
\draw (1/2, 3/2) rectangle (0, 1);
\draw (1, 3/2) rectangle (1/2, 1);
\draw (-1/2, 1) rectangle (0, 1/2);
\draw (0, 1) rectangle (1/2, 1/2);
\draw (1, 1) rectangle (1/2, 1/2);
\draw (-1/2, 1/2) rectangle (0, 0);
\draw (1/2, 1/2) rectangle (0, 0);
\draw (-1/2, 0) rectangle (0, -1/2);
\node[shape=circle, fill=blue] at (-1/4, 5/4){};
\node[shape=circle, fill=blue] at (3/4, 3/4){};
\node[shape=circle, fill=blue] at (1/4, 1/4){};
\end{scope}

\begin{scope}[xshift=8cm]
\draw (-1/2, 3/2) rectangle (0, 1);
\draw (1/2, 3/2) rectangle (0, 1);
\draw (1, 3/2) rectangle (1/2, 1);
\draw (-1/2, 1) rectangle (0, 1/2);
\draw (0, 1) rectangle (1/2, 1/2);
\draw (1, 1) rectangle (1/2, 1/2);
\draw (-1/2, 1/2) rectangle (0, 0);
\draw (1/2, 1/2) rectangle (0, 0);
\draw (-1/2, 0) rectangle (0, -1/2);
\node[shape=circle, fill=blue] at (1/4, 3/4){};
\node[shape=circle, fill=blue] at (3/4, 3/4){};
\node[shape=circle, fill=blue] at (1/4, 1/4){};
\end{scope}

\end{tikzpicture}
\caption{The $5$ excited diagrams of the skew Young diagram of shape $(3, 3, 2, 1)/(2, 1)$.}
\label{fig:ribbon2}
\end{figure}

Denote the set of excited diagrams for a skew Young diagram of shape $\lambda/\mu$ as $\mathbb{E}(\lambda/\mu)$. Naruse's formula for the number of skew Young tableaux of shape $\lambda/\mu$, denoted by $f^{\lambda/\mu}$, is as follows.

\begin{theorem}[Naruse's formula \cite{naruse}] \label{thm:naruse}
For a skew diagram of shape $\lambda/\mu$ with $n = |\lambda|-|\mu|$,
$$f^{\lambda/\mu} = \frac{n!}{\displaystyle\prod_{c \in \mathbb{D}(\lambda)} h_\lambda(c)}\displaystyle\prod_{D \in \mathbb{E}(\lambda/\mu)}\displaystyle\sum_{d \in D} h_\lambda(d).$$
\end{theorem}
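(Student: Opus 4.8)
The plan is to derive Naruse's formula from a determinantal expression for $f^{\lambda/\mu}$ and then to evaluate that determinant combinatorially, so that its terms line up one-for-one with excited diagrams. I would phrase everything in the language of skew Schur functions, since the algebraic flexibility there is what exposes the excited-diagram structure; the hook-length normalization in the statement reappears only at the very end, after a specialization.

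First I would reduce the theorem to a determinant identity. Applying the Jacobi--Trudi expansion $s_{\lambda/\mu} = \det\bigl[h_{\lambda_i-\mu_j-i+j}\bigr]_{i,j=1}^{\ell}$ together with the exponential specialization $h_k \mapsto 1/k!$ --- which sends $s_{\lambda/\mu}$ to $f^{\lambda/\mu}/n!$, because under this specialization the monomials of a skew Schur function collapse to a count of standard fillings --- yields the Aitken--Feit formula
$$\frac{f^{\lambda/\mu}}{n!} = \det\!\left[\frac{1}{(\lambda_i-\mu_j-i+j)!}\right]_{i,j=1}^{\ell},\qquad \tfrac{1}{k!}:=0 \text{ for } k<0.$$
It then remains to prove the algebraic identity that this determinant equals $\sum_{D\in\mathbb{E}(\lambda/\mu)}\prod_{c\in\mathbb{D}(\lambda)\setminus D}\tfrac{1}{h_\lambda(c)}$; multiplying through by $n!$ and reinstating the normalization $\prod_{c\in\mathbb{D}(\lambda)}h_\lambda(c)$ recovers the statement of Theorem~\ref{thm:naruse}.

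The combinatorial core is a weight-preserving bijection between excited diagrams and families of non-intersecting lattice paths inside $\mathbb{D}(\lambda)$, following Kreiman and Morales--Pak--Panova. I would keep the determinant in a flagged form, replacing each $1/k!$ by a complete homogeneous symmetric function $h_k$ in a bounded alphabet attached to the rows of $\lambda$, so that the Lindström--Gessel--Viennot lemma applies. LGV rewrites the determinant as a signed sum over path tuples with sources and sinks read off from $\mu$ and $\lambda$; the surviving non-intersecting tuples are exactly the boundary profiles of excited diagrams, the sets of cells swept out as one iterates the excited move $c_{i,j}\mapsto c_{i+1,j+1}$. Verifying that the LGV endpoints match the cells reachable under this move, so that non-intersecting tuples and excited diagrams correspond bijectively, is the structural heart of the argument.

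The last step --- and the one I expect to be the main obstacle --- is to track the specialization through the bijection. I must show that the product of path weights attached to the tuple for an excited diagram $D$, once the flagged $h_k$ are sent back to $1/k!$, collapses to exactly $\prod_{c\in\mathbb{D}(\lambda)\setminus D}1/h_\lambda(c)$: each step of each path should account for precisely one reciprocal hook-length factor of an uncovered cell, while the cells of $D$ contribute the complementary factors absorbed into $\prod_{c\in\mathbb{D}(\lambda)}h_\lambda(c)$. Establishing this clean factorization of hook lengths along non-intersecting paths is delicate and is where the bulk of the work lies. As a fallback I would keep in reserve Naruse's original route through equivariant Schubert calculus, where excited diagrams index the terms in the localization of a Schubert class and the skew hook-length formula emerges as the torus parameters degenerate; this bypasses the path bijection but demands considerably more geometric machinery, so I would present the Lindström--Gessel--Viennot argument as the primary proof.
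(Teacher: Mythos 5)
The first thing to say is that the paper does not prove this statement at all: Theorem~\ref{thm:naruse} is Naruse's formula, imported verbatim from \cite{naruse} and used as a black box (the paper's only engagement with it is the worked example computing $f^{(3,3,2,1)/(2,1)}=61$). So there is no ``paper's own proof'' to compare against; you are attempting something the authors deliberately outsource. For what it is worth, your target identity $\sum_{D\in\mathbb{E}(\lambda/\mu)}\prod_{c\in\mathbb{D}(\lambda)\setminus D}1/h_\lambda(c)$ is the correct form of the theorem, and it silently repairs a typo in the paper's display, where the $\prod$ over excited diagrams and the $\sum$ over cells are swapped relative to what the worked example actually computes.

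As a proof, however, your proposal is a plan rather than an argument, and the plan defers exactly the steps that carry all the content. The reduction to the Aitken determinant via Jacobi--Trudi and the exponential specialization $h_k\mapsto 1/k!$ is fine and standard. But the LGV step has a concrete unproven prerequisite you never state: for the determinant $\det[1/(\lambda_i-\mu_j-i+j)!]$ to be an LGV determinant with cell weights $1/h_\lambda(c)$, you need the single-path identity asserting that the sum, over all lattice paths inside $\mathbb{D}(\lambda)$ between the prescribed source--sink pair $(a_j,b_i)$, of $\prod_{c\in\gamma}1/h_\lambda(c)$ equals exactly $1/(\lambda_i-\mu_j-i+j)!$. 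This is a genuinely nontrivial hook-length identity (hooks depend on the ambient shape $\lambda$ globally, not on the path), and without it the determinant and the path model are not yet connected. Likewise, the claim that the surviving non-intersecting tuples are precisely the complements of excited diagrams is Kreiman's theorem; you describe it as ``the structural heart'' but give no argument for the endpoint matching or for why the excited move generates exactly these complements. Your own text concedes that the weight factorization ``is where the bulk of the work lies.'' These are not polish-level omissions: they are the two lemmas that constitute the Morales--Pak--Panova/Kreiman proof you are following, so what you have written is an accurate map of that proof, not the proof itself. The fallback through equivariant Schubert calculus (Naruse's original route) is named but even less developed. If you intend to supply a proof where the paper supplies none, the two lemmas above are what must actually be proven.
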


For the aforementioned skew Young diagram of shape $(3, 3, 2, 1)/(2, 1)$, there are $$\frac{6!}{6\cdot 5 \cdot 4 \cdot 3^2 \cdot 2 \cdot 1^3} \left(6 \cdot 5 \cdot 4 + 6 \cdot 4 \cdot 1 + 6 \cdot 5 \cdot 1 + 6 \cdot 1^2 + 3 \cdot 1^2\right) = 61$$
distinct tableaux, shown through an application of Naruse's formula.

For a partition $\lambda$, the \emph{ribbon} $\mathbb{D}_{\operatorname{rib}}(\lambda) \subseteq \mathbb{D}(\lambda)$ is the skew Young diagram such that a cell $c \in \mathbb{D}(\lambda)$ is contained in $\mathbb{D}_{\operatorname{rib}}(\lambda)$ if and only if $|\{c_{i, j+1}, c_{i+1, j}, c_{i+1, j+1}\} \cap \mathbb{D}(\lambda)| < 3$. For example, as shown in Figure \ref{fig:ribbon1}, the skew Young diagram of shape $(3, 3, 2, 1)/(2, 1)$ is the ribbon $\mathbb{D}_{\operatorname{rib}}((3, 3, 2, 1))$. Note that $\mathbb{D}_{\operatorname{rib}}$ is a one-to-one correspondence between partitions and ribbons.

Define a \emph{tableau of ribbon shape} to be a skew Young tableau whose shape is a ribbon. For a tableau of ribbon shape with shape $\lambda/\mu$ and size $n = \lambda-\mu$, the filling of the entries determines a permutation $\pi = \pi_1 \cdots \pi_n$, where the entry of the lower left corner cell is $\pi_1$, the entry of the cell adjacent to the lower left corner cell is $\pi_2$, and successive entries move up and to the right until the upper right corner cell, whose entry is $\pi_n$. Observe that the descent set of $\pi$ is equal to the set of positions $i$ such that the cell containing $\pi_i$ is directly below the cell containing $\pi_{i+1}$. This descent set does not depend on the permutation, it only depends on the shape of the ribbon. In other words, each ribbon of size $n$ must have a unique corresponding descent set. Furthermore, for fixed $n$ and descent set $I \subseteq [n-1]$, there exists a unique ribbon of size $n$ corresponding to $I$, as the presence (or absence) of an element in $I$ fixes the location of the cell in the corresponding position of the ribbon. This implies a bijection between the ribbons of size $n$ and the subsets of $[n-1]$. For example, under this bijection, the ribbon shown in Figure \ref{fig:ribbon1} would correspond to the descent set $\{1, 3, 5\} \subseteq [5]$.

Define $\lambda^I$ to be the unique partition such that the ribbon $\mathbb{D}_{\operatorname{rib}}(\lambda^I)$ corresponds to descent set $I$ in accordance to the bijection above, and $(\lambda^I)_1 = (\lambda^I)_2$. Partition $\mu^I$ is defined such that $\mathbb{D}_{\operatorname{rib}}(\lambda^I) = \mathbb{D}(\lambda^I/\mu^I)$.

Let $m \ge 2$ be a positive integer, and take a set $I$ of positive integers such that $\mathrm{max}(I)=m-1$.  For partition $\lambda$ and positive integer $t$, define the partition $\lambda^t = (\lambda_1+t, \lambda_2, \lambda_3, \ldots)$. Take $n=m+t$ for variable nonnegative integer $t$, then the value $d_I(n)$ is the number of tableaux of ribbon shape with shape $(\lambda^{I})^t/\mu^I$. We use this property, along with Naruse's formula, to discover a nice formula for $d_I$.

Note that for all sets of positive integers $I$ and nonnegative integers $t$, the ribbons of shape $\lambda^I/\mu^I$ and $(\lambda^I)^{t}/\mu^I$ have the same set of excited diagrams. Furthermore, the only cells whose hook lengths vary with $n$ must be in the first row, and the set of cells in the first row in an excited diagram must be in the form $\{c_{1, 1}, c_{1, 2}, \ldots, c_{1, k}\}$. For a subset of cells $D \subseteq \mathbb{D}((\lambda^I)^{t}/\mu^I)$, let $\overline{D}$ be the subset created by removing all cells in $D$ of the form $c_{1, i}$. Define the \emph{hook product} of a diagram as the product of hook lengths of all of its cells. Then, let $E(t)$ denote the \emph{excitation factor} of $(\lambda^I)^t/\mu^I$, or the sum of the hook products of each excited diagram. Additionally, fix $\alpha_i = h_{\lambda^I}(c_{1, i})-1$. We now compute $E(t)$, which is necessary to apply Naruse's formula to $\mathbb{D}((\lambda^I)^t/\mu^I)$. We achieve this by taking the summation of hook products over all excited diagrams $D \subseteq \mathbb{E}((\lambda^I)^t/\mu^I)$. In particular, we split the hook product of each excited diagram into two sub-products: the first concerning cells of the form $c_{1, i}$, the second concerning all other cells. Thus, we have
\begin{equation} \label{excite}  E(t) = \displaystyle\sum_{D \in \mathbb{E}((\lambda^I)^t/\mu^I)} \left(\displaystyle\prod_{c \in \overline{D}} h_{\lambda^I}(c)\right)\left(\displaystyle\prod_{c_{1, i} \in D}(t+\alpha_i)\right). \end{equation}

Denote $n = m+t$ for variable $t$. By applying Naruse's formula to $\mathbb{D}((\lambda^I)^t/\mu^I)$, we now compute $f^{(\lambda^I)^t/\mu^I}$, which is equal to $d_I(n)$. Thus,
\begin{align} 
d_I(n) &= d_I(m+t)\nonumber \\
\label{des-poly}
 &= (m+t)!\left(\displaystyle\prod_{c \in \overline{\mathbb{D}(\lambda^I/\mu^I)}}\frac{1}{h_\lambda(c)}\right)\left(\displaystyle\prod_{i=1}^{\lambda_1} \frac{1}{t+\alpha_i}\right)\frac{1}{(t-1)!}E(t).
\end{align}

The terms in the expression of $d_I(n)$ other than $E(t)$ represent the quotient of $n!$ by the hook product of $\mathbb{D}(\lambda^I)$. We examine $E(t)$ for its combinatorial significance, as the other terms in Equation \eqref{des-poly} are already understood as a product of monomials. The next section simplifies $E(t)$ into linear combinations of terms of the Naruse-Newton basis.
\bigskip

\section{Naruse-Newton Coefficients}

The Naruse-Newton basis was introduced in \cite{jm19}, and our paper focuses on the related Naruse-Newton coefficients. This section serves as an introduction to the basis and its coefficients.

Consider a descent set $I$ and its corresponding class of ribbons $\mathbb{D}_{\operatorname{rib}}((\lambda^{I})^t)$ for a nonnegative integer $t$ (refer to Section 2 for a detailed explanation of the correspondence). Denote $s = (\mu^I)_1$. Recall Equation \eqref{excite} as the formula for the excitation factor.  Each $c_{1, k}$ has a hook length of $(t+\alpha_k+1)$, which is linear in $t$. Meanwhile, each $c_{i, j}$ with $i>1$ is not dependent upon $t$. The initial excited diagram $\mathbb{D}(\mu^I)$ contains $s$ cells of the form $c_{1, k}$. Thus, the $t$-degree of the term in $E(t)$ corresponding to $\mathbb{D}(\mu^I)$, which equals $$\left(\displaystyle\prod_{c \in \overline{\mathbb{D}(\mu^I)}} h_{\lambda^I}(c)\right)\left(\displaystyle\prod_{c_{1, i} \in \mathbb{D}(\mu^I)}(t+\alpha_i)\right),$$ is equal to $s$. Since there do not exist any excited diagrams with more than $s$ cells of the form $c_{1, k}$, the degree of polynomial $E(t)$ must be equal to $s$.

For variable $x$, a \emph{Newton basis} is a sequence of polynomials $$\left(a_k\displaystyle\prod_{i=1}^k(x+b_i)\right)_{k=0}^m$$ for nonnegative integer $m$ and complex sequences $(a_k)_{k=0}^m$ and $(b_k)_{k=0}^m$, such that each $a_k$ is nonzero. Note that a Newton basis is a $\mathbb{C}$-linear basis for the $(m+1)$-dimensional vector space $\mathbb{C}[x]^{\le m}$ of polynomials of degree at most $m$. Recall from Equation \eqref{excite} that each summand of the polynomial $E(t)$ can be written as $p \left(\displaystyle\prod_{i=1}^j (t+\alpha_i)\right)$ for nonnegative integers $p$ and $j$.

In Equation \eqref{excite}, the excitation factor $E(t)$ is expressed through the Newton basis consisting of polynomials $1, t+\alpha_1, (t+\alpha_1)(t+\alpha_2), \ldots, (t+\alpha_1)\cdots(t+\alpha_s)$, defined as the \emph{Naruse-Newton basis}. Nonnegative integers $C_0, C_1, \ldots, C_s$, which we call the \emph{Naruse-Newton coefficients} of the descent set $I$, are defined such that $$E(t) = C_0 (t+\alpha_1)\cdots(t+\alpha_s) + \cdots + C_{s-1}(t+\alpha_1)+C_s.$$

We also denote $C_j$ of descent set $I$ as $C_j(I)$. This formulation of the excitation factor serves a distinct advantage. For nonnegative integer $i \le s$, we have that $C_{s-i}(t+\alpha_1)\cdots(t+\alpha_i)$ is the sum of the hook products over all excited diagrams of $\mathbb{D}((\lambda^I)^t/\mu^I)$ with $i$ cells in the first row of $\lambda$.

For an example of the sequence of $C_i$, take the ribbon of shape $\mathbb{D}_{\operatorname{rib}}(\lambda)$ where $\lambda = (3, 3, 2, 1)$. Its excited diagrams are drawn in Figure \ref{fig:ribbon2}. Then, we know $C_0$ equals the sum of hook products of cells outside the first row across the set of excited diagrams containing $c_{1, 1}$ and $c_{1, 2}$. These are the leftmost two diagrams. Hence, we compute $C_0 = 5+1 = 6$. Similarly, the third and fourth diagrams with one cell in the first row are counted in the computation of $C_1$, and the fifth diagram with no cells in the first row is counted in the computation of $C_2$. Finally, we compute $C_1 = 5 \cdot 1 + 1 \cdot 1 = 6$, and $C_2 = 1 \cdot 3 \cdot 1 = 3$. Additional examples of Naruse-Newton coefficients can be found in Appendix A.

As shown in the example, the Naruse-Newton coefficients measure the ``relative contribution" to the excitation factor of each class of excited diagrams. Jiradilok and McConville use bounds between coefficients in \cite{jm19} to prove a conjecture relating to descent polynomials, and a key goal of this paper is to expand upon those bounds.

\bigskip
\section{Characterization of Minimum Ratios}

For a descent set $I$ and integers $a$ and $b$ such that $0 \le a<b \le s$, define the ratio $C_{a, b}(I) = \frac{C_a(I)}{C_b(I)}$, where $C_a$ and $C_b$ are the $a$th and $b$th Naruse-Newton coefficients corresponding to $I$.

In this section, we recall the minimum bound to the ratio $C_{a, b}(I)$ proven in \cite{jm19}, shown in  Proposition \ref{prop:jm19-2.4}. We then find all equality cases to this bound. To do so, we first monotonically change ratios by manipulating the ribbon $\mathbb{D}_{\operatorname{rib}}(\lambda^I)$, shown in Proposition \ref{prop:add-row}. A monotonic decrease of ratios in an infinite sequence of descent sets implies that $C_{a, b}$ is not minimized for any of these sets, allowing us to find the minimum bound. Thus, as shown in Theorems \ref{thm:num-min} and \ref{cor:min-char}, we can find all $a$ and $b$ for fixed $I$ such that $C_{a, b}(I)$ is minimized, and also characterize all sets for fixed $a$ and $b$ for which $C_{a, b}$ is minimized.

\begin{proposition}[{\cite[Proposition~2.4]{jm19}}] \label{prop:jm19-2.4}
Let $(C_0, C_1, \ldots, C_s)$ be the Naruse-Newton coefficients corresponding to a fixed non-empty descent set $I$. Then, $$\frac{C_0}{0!} \ge \frac{C_1}{1!} \ge \cdots \ge \frac{C_s}{s!}.$$
\end{proposition}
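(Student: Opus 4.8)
The plan is to reduce the whole chain to the single family of pointwise bounds $(k+1)C_k \ge C_{k+1}$ for $0 \le k \le s-1$, which is exactly what $\frac{C_k}{k!} \ge \frac{C_{k+1}}{(k+1)!}$ becomes after clearing factorials. Throughout I would use the combinatorial reading of the coefficients from Section 3: writing $\mathbb{E}_i$ for the set of excited diagrams of $(\lambda^I)^t/\mu^I$ with exactly $i$ cells in the first row (which, by the prefix property, must be $c_{1,1},\dots,c_{1,i}$) and $w(D) = \prod_{c \in \overline{D}} h_{\lambda^I}(c)$ for the hook product of the remaining cells, we have $C_{s-i} = \sum_{D \in \mathbb{E}_i} w(D)$. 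Substituting $i = s-k$, the target becomes
$$ (s-i+1)\sum_{D \in \mathbb{E}_i} w(D) \;\ge\; \sum_{D' \in \mathbb{E}_{i-1}} w(D'), \qquad 1 \le i \le s. $$

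The mechanism I would build the comparison on is the excitation move applied to the rightmost first-row cell $c_{1,i}$. Since the diagonal coordinate (column minus row) is preserved by excitation, every cell travels down its own diagonal; in particular each $D' \in \mathbb{E}_{i-1}$ is obtained from a unique $D \in \mathbb{E}_i$ by sliding the top cell of diagonal $i-1$ from $c_{1,i}$ down to some $c_{r,\,i-1+r}$ with $r \ge 2$, and un-exciting that cell back to the first row defines a map $\psi \colon \mathbb{E}_{i-1} \to \mathbb{E}_i$. As only that one cell moves, and it lands off the first row, the hook products are tied by the single factor $w(D') = h_{\lambda^I}(c_{r,\,i-1+r})\, w(\psi(D'))$. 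Summing over the fibers of $\psi$ rewrites the right-hand side as $\sum_{D \in \mathbb{E}_i} w(D)\big(\sum_r h_{\lambda^I}(c_{r,\,i-1+r})\big)$, where the inner sum runs over the admissible depths to which $c_{1,i}$ can descend in $D$.

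The hard part is extracting the constant $s-i+1$, and this is where I expect the main resistance. The naive hope that $\sum_r h_{\lambda^I}(c_{r,\,i-1+r}) \le s-i+1$ holds diagram-by-diagram is false: already for the running shape $(3,3,2,1)$, taking $i=1$ one meets a single admissible hook length $h_{\lambda^I}(c_{2,2}) = 3 > 2 = s-i+1$, so the bound cannot be localized to one fiber of $\psi$. The inequality is genuinely global — the surplus produced by diagrams in which $c_{1,i}$ slides freely must be paid for by the diagrams $D \in \mathbb{E}_i$ in which $c_{1,i}$ is \emph{blocked} (its diagonal already carries a cell at $c_{2,i+1}$), which $\psi$ never hits yet which can carry large weight. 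I would attack this by refining $\psi$ into an injection of $\mathbb{E}_{i-1}$ into $s-i+1$ labelled copies of $\mathbb{E}_i$ that matches hook-length weights exactly rather than merely bounding them, arranging that the weight of a blocked diagram is precisely what absorbs the excess hook length of the cell that blocks it. Equivalently, one can fix all cells off diagonal $i-1$ and treat the cells on that diagonal as ordered non-crossing tokens on a hook-weighted track, reducing the claim to a one-dimensional inequality; the obstacle then reappears as the coupling between diagonal $i-1$ and its neighbors, which is exactly the blocking phenomenon and would have to be tracked, plausibly by inducting on the ribbon one diagonal at a time.

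A complementary route I would keep in reserve recasts the coefficients analytically. Because the nodes $-\alpha_1 > \cdots > -\alpha_s$ are distinct and $E(t)$ has nonnegative coefficients in $t$, each $C_{s-i}$ with $i \le s-1$ is the divided difference $E[-\alpha_1,\dots,-\alpha_{i+1}]$ of $E$, while $C_0$ is the leading coefficient of $E$. The appearance of $k!$ strongly suggests comparing consecutive coefficients through the Hermite--Genocchi integral representation of divided differences over simplices of volume $1/i!$; if the signs of the relevant integrands can be controlled, the decrease of $\frac{C_k}{k!}$ would follow without ever confronting the blocking combinatorics directly.
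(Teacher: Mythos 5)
Your reduction is set up correctly: clearing factorials turns the chain into $(k+1)C_k \ge C_{k+1}$, and with $C_{s-i} = \sum_{D \in \mathbb{E}_i} w(D)$ the target inequality $(s-i+1)\sum_{D \in \mathbb{E}_i} w(D) \ge \sum_{D' \in \mathbb{E}_{i-1}} w(D')$ is the right statement, as is the fiber decomposition via un-exciting the cell on diagonal $i-1$ (your factorization $w(D') = h_{\lambda^I}(c_{r,\,i-1+r})\,w(\psi(D'))$ checks out, e.g.\ on the shape $(3,3,2,1)$ where the unique diagram in $\mathbb{E}_0$ maps to the weight-$1$ diagram in $\mathbb{E}_1$ with factor $h(c_{2,2})=3$). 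Your counterexample showing the local bound $\sum_r h \le s-i+1$ fails diagram-by-diagram is also correct, and your diagnosis that the surplus must be absorbed by blocked diagrams (the weight-$5$ diagram containing $c_{2,1}$, which $\psi$ never hits) is exactly what makes the inequality true in that example: $2\cdot 6 = 2(5+1) \ge 3$. But this is where the proof stops. The two completion strategies — refining $\psi$ into a weight-matching injection into $s-i+1$ labelled copies of $\mathbb{E}_i$, and the Hermite--Genocchi route with its unverified sign condition — are stated as intentions, not executed, and the blocking bookkeeping you correctly identify as the crux is never actually carried out. So the proposition is not proven.

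For comparison: the paper does not prove this statement either; it is quoted as Proposition 2.4 of Jiradilok--McConville, who prove it via their Slice and Push Inequality. That is a global comparison tool for hook products under pushing cells of an excited diagram, i.e.\ precisely the kind of non-local argument your analysis predicts is necessary. Your instinct that ``the inequality is genuinely global'' is therefore sound and matches the structure of the known proof, but to close the gap you would either need to carry out the weight-matching injection in full (handling the interaction between diagonal $i-1$ and the diagonals that block it), or import the Slice and Push Inequality as a lemma and derive the chain from it as the original authors do.
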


This proposition is proven in \cite{jm19} using the Slice and Push Inequality. Thus, for any $0 \le a < b \le s$, we have $C_{a, b} \ge \frac{a!}{b!}$. The equality case is particularly insightful, as it provides a sharp lower bound to this ratio. We now characterize descent sets which achieve this equality case.

\begin{definition} \label{def:deep}
Let $I$ be a non-empty set of positive integers, and let $s = (\lambda^I)_1-1$. We call $I$ a \emph{deep descent set} if the number of distinct $i$ such that $1 \le i \le s+1$ and $\lambda'_i=2$ is less than $s$. 

On the other hand, if $I$ is not a deep descent set, then it is a \emph{shallow descent set}.
\end{definition}

In other words, a non-empty set $I$ of positive integers is a deep descent set if and only if $\mathbb{D}_{\operatorname{rib}}(\lambda^I)$ has at least two columns and $(\lambda^I)'_2>2$. 

For a descent set $I$, let $w(I)$ be the nonnegative integer equal to the number of distinct $1 \le i \le s+1$ such that $\lambda'_i = 2$. For a ribbon $\mathbb{D}_{\operatorname{rib}}(\lambda^I)$, define $\mathbb{D}_{\operatorname{rib}}(\lambda^I[i])$ to be the ribbon representing the partition $((\lambda^I)_{i+1}, (\lambda^I)_{i+2}, (\lambda^I)_{i+3}, \ldots)$, in other words, the ribbon formed by removing the topmost $i$ rows from $\mathbb{D}_{\operatorname{rib}}(\lambda^I)$.

We first prove the following case obtains the minimum ratio. Note that the next proposition is virtually identical to, except one index stronger than Corollary 3.6 of \cite{jm19}.

\begin{proposition} \label{prop:strong-3.6}
Let $I$ be a non-empty set of positive integers, and define $\lambda=\lambda^I$ where $\lambda^I$ is defined in Section 2.  Let $s = \lambda_1-1$, and let $(C_0, C_1, \ldots, C_s)$ be the Naruse-Newton coefficients of $I$. If for some positive integer $i \le s$ it holds true that $\lambda'_{i+1} = \lambda'_{s+1} = 2$, then  $$\frac{C_0}{0!} = \frac{C_1}{1!} = \cdots = \frac{C_{s-i+1}}{(s-i+1)!}.$$
\end{proposition}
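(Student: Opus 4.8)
The plan is to compute the coefficients $C_0, \dots, C_{s-i+1}$ exactly from their combinatorial description and to read off the equalities from a factorization of the relevant hook products. Recall from Section~3 that for each $k$ with $0 \le k \le s$, the coefficient $C_{s-k}$ is the sum, over all excited diagrams $D \in \mathbb{E}(\lambda/\mu^I)$ whose first row is precisely $\{c_{1,1}, \dots, c_{1,k}\}$, of the hook product $\prod_{c \in \overline D} h_\lambda(c)$ of the cells of $D$ outside the first row. I first reinterpret the hypothesis. Since $\lambda_1 = \lambda_2$, every column $j \le s+1$ has $\lambda'_j \ge 2$, and as $\lambda'$ is weakly decreasing the assumption $\lambda'_{i+1} = \lambda'_{s+1} = 2$ forces $\lambda'_j = 2$ for all $i+1 \le j \le s+1$; equivalently $\lambda = (s+1, s+1, \lambda_3, \dots)$ with $\lambda_3 \le i$, so columns $i+1, \dots, s+1$ occupy only rows $1$ and $2$. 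In this ``height-two'' region one computes directly that $h_\lambda(c_{2,j}) = (\lambda_2 - j) + (\lambda'_j - 2) + 1 = s+2-j$.

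Next I would prove the structural heart of the argument: for every $k$ with $i-1 \le k \le s$, the excited diagrams counted by $C_{s-k}$ factor into a \emph{lower part} (cells in rows $\ge 2$) and an \emph{upper part} (the images of the first-row cells), with the lower part ranging over a $k$-independent family of configurations and the upper part entirely forced. Two observations drive this. First, an excitation move relocates a cell from $(r,c)$ to $(r+1,c+1)$ and its validity depends only on cells weakly to the lower right; a first-row cell can therefore obstruct only the excitation of another first-row cell, so the family of admissible rows-$\ge 2$ excitations is insensitive to which first-row cells are present. Second, the cells of $\mathbb{D}(\mu^I)$ in rows $\ge 2$ all start in columns $\le \lambda_3 - 1 \le i-1$, and a cell moved to row $r \ge 3$ and column $c \ge i+1$ would need $\lambda'_c \ge 3$, which fails; hence every rows-$\ge 2$ cell remains in columns $\le i$. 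Consequently, for $i-1 \le k \le s$ the first-row cells $c_{1,j}$ with $j > k$ are each pushed exactly once, to $c_{2,j+1}$ in the height-two region (columns $\ge i+1$), where no further descent is possible and no lower-row cell ever reaches. The upper part is thus the single strip $\{c_{2,j} : k+2 \le j \le s+1\}$, whose hook product telescopes:
\[
\prod_{j=k+2}^{s+1} (s+2-j) = (s-k)!.
\]

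The delicate, and decisive, case is $k = i-1$, which is exactly the index by which this proposition improves Corollary~3.6 of \cite{jm19}. Here the first-row cell $c_{1,i}$ itself is pushed, to $c_{2,i+1}$, so the strip begins one column earlier and its hook product becomes $\prod_{j=i+1}^{s+1}(s+2-j) = (s-i+1)!$. The hypothesis $\lambda'_{i+1} = 2$ is used in three ways at this boundary: it places the landing cell $c_{2,i+1}$ inside the height-two region so that $h_\lambda(c_{2,i+1}) = s-i+1$ completes the factorial; it prevents $c_{1,i}$ from descending into a third row, keeping the upper part forced and unique; and it keeps the lower-row cells (confined to columns $\le i$) from ever meeting $c_{2,i+1}$. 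I would argue carefully that emptying the position $c_{1,i}$ creates no new rows-$\ge 2$ excitation — the only move it could enable is the push of $c_{1,i-1}$ to $c_{2,i}$, which does not occur because $c_{1,i-1}$ is a retained first-row cell — so the set of lower parts at $k=i-1$ is identical to that for $k \ge i$. Establishing this non-interference rigorously (together with the fact that for $k < i-1$ some pushed first-row cell would instead land in a column $\le i$ and genuinely perturb the lower part) is the main obstacle.

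Granting the factorization, let $L$ denote the common value $\sum \prod_{c} h_\lambda(c)$ over the $k$-independent family of lower parts; then $C_{s-k} = (s-k)!\,L$ for all $i-1 \le k \le s$. Setting $l = s-k$ gives $C_l = l!\,L$ for $0 \le l \le s-i+1$, so that $\frac{C_0}{0!} = \frac{C_1}{1!} = \cdots = \frac{C_{s-i+1}}{(s-i+1)!} = L$, which is the claim.
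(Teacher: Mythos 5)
Your proof is correct, but it takes a genuinely different route from the paper's. The paper first invokes Proposition \ref{prop:jm19-2.4} (the weakly decreasing chain $\frac{C_0}{0!} \ge \frac{C_1}{1!} \ge \cdots \ge \frac{C_s}{s!}$, imported from \cite{jm19} via the Slice and Push Inequality) to reduce the entire chain of equalities to the single endpoint identity $C_0 \cdot (s-i+1)! = C_{s-i+1}$; it then verifies that identity by the same factorization you describe, but only at the two extremes: $C_0$ equals the excitation factor of $\mathbb{D}_{\operatorname{rib}}(\lambda^I[1])$, while $C_{s-i+1}$ equals $\prod_{k=i+1}^{s+1} h_{\lambda^I}(c_{2,k}) = (s-i+1)!$ times that same excitation factor. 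You instead establish the factorization $C_l = l!\,L$ uniformly for every $0 \le l \le s-i+1$, which makes the argument self-contained --- no appeal to the monotone chain --- at the cost of carrying out the non-interference analysis (pushed first-row cells land once in the height-two region, columns $\ge i+1$, and can descend no further; lower cells never leave columns $\le i$; neither ever blocks the other) at all indices rather than just the last one. Since that analysis is uniform in $k$, this costs essentially nothing extra, and it buys two small things: it exhibits the common value of all the ratios as the excitation factor $L$ of the truncated ribbon, and it makes visible exactly why the range of indices is sharp (for $k < i-1$ a pushed cell lands in a column $\le i$ and genuinely perturbs the lower part). What the paper's route buys is brevity: two endpoint computations sandwiched by monotonicity replace the uniform-in-$k$ bijection, and your identification of $C_0$ with $L$ is precisely the paper's $k=s$ endpoint. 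Your steps flagged as the ``main obstacle'' (emptying $c_{1,i}$ enables no new lower excitation; upper and lower cells cannot obstruct one another) are argued correctly: an excitation blocked by a cell in row $1$ can only be the excitation of another row-$1$ cell, and any lower-cell move whose blocking set touches a column $\ge i+1$ would require $\lambda'_{c} \ge 3$ in that column, which the hypothesis rules out.
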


\begin{proof}
It is sufficient to prove that $C_0 \cdot (s-i+1)! = C_{s-i+1}$ because of Proposition \ref{prop:jm19-2.4}. Recall that $C_0$ is equal to the excitation factor of ${\mathbb{D}}_{\operatorname{rib}}(\lambda^I[1])$. In addition, we have that $C_{s-i+1}$ is the product of $\displaystyle\prod_{k=i+1}^{s+1} h_{\lambda^I}\left(c_{2, k}\right) = (s-i+1)!$ with the excitation factor of $\mathbb{D}_{\operatorname{rib}}(\lambda^I[1])$. This is as desired, and the proof is complete.
\end{proof}

It is clear from Proposition \ref{prop:strong-3.6} that all shallow descent sets satisfy $\frac{C_0}{0!} = \frac{C_1}{1!} = \cdots = \frac{C_s}{s!}$. As shallow descent sets obtain the minimum value for all $C_{a, b}$ with $a<b$, we now only need to focus on deep descent sets (rather than both deep and shallow ones). To do this, we begin by focusing on monotonic changes of the ratio $C_{a, b}$, in particular, changes to the value of $C_{a, b}$ when $\mathbb{D}_{\operatorname{rib}}(\lambda^I)$ is manipulated.

For a non-empty descent set $I$, define $I- = I-\max(I)$. We define sequences of nonnegative integers $\{D_i\}$ and $\{E_i^j\}$ corresponding to set $I$, with $D_i$ and $E_i^j$ also expressed as $D_i(I)$ and $E_i^j(I)$. The sequence $\{D_i\}$ is defined for integer $i$ with $s \ge i \ge w$ such that $D_i(I) = C_{i-w(I)}(I-)$. For example, in Figure \ref{fig:ribbon3}, the ribbon $\mathbb{D}_{\operatorname{rib}} (\lambda^I)$ is outlined and the ribbon $\mathbb{D}_{\operatorname{rib}} (\lambda^{I-})$ is bolded for $I = \{2, 4, 6\}$. In this case, we have that $D_i(I) = C_{i-2}(I-)$. The sequence $\{E_i^j\}$ is defined for integers $i$ and $j$ with $\lambda'_1 \ge i \ge 0$ and $s \ge j \ge 0$, such that $E_i^j$ represents the excitation factor of the ribbon formed by removing the rightmost $j$ columns from $\mathbb{D}_{\operatorname{rib}}(\lambda^I[i])$. In other words, $E_i^j$ is the excitation factor of the ribbon formed by removing the rightmost $j$ columns and topmost $i$ rows from $\mathbb{D}_{\operatorname{rib}}(\lambda^I)$.

\tikzset{
    ultra ultra thick/.style={line width=3pt}
}

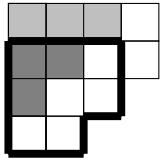
\begin{figure} 
\centering
\begin{tikzpicture}
\draw[draw=black, fill=light-gray] (-1/2, 3/2) rectangle (0, 1);
\draw[draw=black, fill=light-gray] (1/2, 3/2) rectangle (0, 1);
\draw[draw=black, fill=light-gray] (-1, 3/2) rectangle (-1/2, 1);
\draw[draw=black, fill=gray] (-1, 2/2) rectangle (-1/2, 1/2);
\draw[draw=black, fill=gray] (-1, 1/2) rectangle (-1/2, 0);
\draw (1, 3/2) rectangle (1/2, 1);
\draw[draw=black, fill=gray] (-1/2, 1) rectangle (0, 1/2);
\draw (0, 1) rectangle (1/2, 1/2);
\draw (1, 1) rectangle (1/2, 1/2);
\draw (-1/2, 1/2) rectangle (0, 0);
\draw (1/2, 1/2) rectangle (0, 0);
\draw (-1/2, 0) rectangle (0, -1/2);
\draw (-1, 0) rectangle (-1/2, -1/2);
\draw (-1, 3/2) rectangle (-1/2, -1/2);
\draw (-1, 0) rectangle (-1/2, -1/2);
\draw[ultra ultra thick] (-1, 1) rectangle (1/2, 1);
\draw[ultra ultra thick] (1/2, 1) rectangle (1/2, 0);
\draw[ultra ultra thick] (0, 0) rectangle (1/2, 0);
\draw[ultra ultra thick] (0, -1/2) rectangle (0, 0);
\draw[ultra ultra thick] (0, -1/2) rectangle (-1, -1/2);
\draw[ultra ultra thick] (-1, -1/2) rectangle (-1, 1);
\end{tikzpicture}
\caption{$\mathbb{D}_{\operatorname{rib}} (\lambda^I)$ outlined and $\mathbb{D}_{\operatorname{rib}} (\lambda^{I-})$ bolded for $I = \{2, 4, 6\}$.}
\label{fig:ribbon3}
\end{figure}

\begin{proposition} \label{prop:add-row}
Let $I$ be a non-empty deep descent set of positive integers, and define $\lambda = \lambda^I$ where $\lambda^I$ is defined in Section 2. Let $s = \lambda_1-1$ and $(C_0, C_1, \ldots, C_s)$ be the Naruse-Newton coefficients of $I$, and let $w(I)$ be defined as above. Let positive integers $a$ and $b$ be defined such that $s \ge b > w(I)$ and $b > a \ge 0$, and let $J = \{i+1 \mid i \in I\}$. Then, it holds that $C_{a, b}(J)<C_{a, b}(I)$.
\end{proposition}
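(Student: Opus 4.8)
The plan is to reduce the strict inequality to a \emph{mediant} comparison of Naruse--Newton coefficients and then close it with the monotonicity of Proposition~\ref{prop:jm19-2.4}. Since $C_b(I),C_b(J)>0$, the assertion $C_{a,b}(J)<C_{a,b}(I)$ is equivalent to the cross-multiplied form $C_a(J)\,C_b(I)<C_a(I)\,C_b(J)$, so the entire task is to understand how the individual coefficients transform under $I\mapsto J$. First I would record the geometry of this operation. Adding $1$ to every element of $I$ inserts an ascent at the first position of the ribbon reading word and shifts all later steps up by one index; on the normalized shapes this amounts to prepending a single leftmost column of full height $R=\lambda'_1$ to $\lambda=\lambda^I$, so that $\lambda^J_i=\lambda_i+1$ for every row $i$ and likewise $\mu^J_i=\mu^I_i+1$. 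Two clean facts result. First, $h_{\lambda^J}(c)=h_{\lambda}(c)$ for every inherited cell $c$, since prepending a leftmost column shifts each inherited cell one step to the right without changing how many cells lie to its right in its row or below it in its column. Second, the flag sequence merely gains one entry at the front, $\alpha^J=(\alpha^J_1,\alpha_1,\dots,\alpha_s)$. In particular $s_J=s+1$, the Naruse--Newton basis of $J$ is that of $I$ with one extra leading factor, and one checks $w(J)=w(I)$.

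Next I would split the excitation factor of $\lambda^J/\mu^J$ according to whether the newly prepended column of $\mu^J$ stays frozen in place. The frozen diagrams are in bijection with the excited diagrams of $\lambda^I/\mu^I$ (shifted one column to the right), and by hook preservation their total contribution factors as $P\,(t+\alpha^J_1)\,E_I(t)$, where $P=\prod_{i=2}^{R-1}h_{\lambda^J}(c_{i,1})>0$ is the product of the hooks of the frozen inner cells lying below the first row. Because $(t+\alpha^J_1)\prod_{l=1}^{i}(t+\alpha_l)=\prod_{l=1}^{i+1}(t+\alpha^J_l)$ and the frozen top cell $c_{1,1}$ raises the first-row count by exactly one, reading this off in the Naruse--Newton basis of $J$ gives $C_k(J)=P\,C_k(I)+Q_k$, where $Q_k\ge 0$ collects the ``correction'' diagrams in which the new column is not frozen. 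Substituting into the cross-multiplied inequality, the term $P\,C_a(I)\,C_b(I)$ cancels and the claim reduces to $Q_a\,C_b(I)<Q_b\,C_a(I)$, i.e.\ to $\tfrac{Q_a}{Q_b}<\tfrac{C_a(I)}{C_b(I)}$ (with $Q_b>0$).

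To control the correction ratio I would proceed by induction on the number of rows $R$ of $\lambda^I$. The non-frozen diagrams are organized by the position to which the bottom inner cell of the new column is excited; deleting that cell exposes a strictly smaller ribbon, and the sequences $\{E_i^j\}$ and $\{D_i\}$ are precisely the bookkeeping devices for this reduction—the relation $D_i(I)=C_{i-w(I)}(I-)$ captures the contribution coming from removing the top row, while the $E_i^j$ record the excitation factors of the sub-ribbons obtained by trimming rows and columns. The inductive hypothesis applied to the smaller ribbon, combined with Proposition~\ref{prop:jm19-2.4}, yields the weak bound $\tfrac{Q_a}{Q_b}\le \tfrac{C_a(I)}{C_b(I)}$, and strictness is then supplied by deepness together with the hypothesis $b>w(I)$: by Proposition~\ref{prop:strong-3.6} the quotients $C_k/k!$ coincide for $0\le k\le w(I)$, so $b>w(I)$ means exactly that index $b$ lies past this initial plateau. (If instead $b\le w(I)$, both $I$ and $J$ would sit on their plateaus and $C_{a,b}$ would be literally unchanged, so $b>w(I)$ is the sharp threshold for strictness.)

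The main obstacle is this last step. The decomposition $C_k(J)=P\,C_k(I)+Q_k$ is the clean, mechanical part, following from hook preservation and the freezing split. The delicate part is pinning down $Q_k$ as the excitation factor of an explicit sub-ribbon with the correct index shift, so that the induction and Proposition~\ref{prop:jm19-2.4} actually deliver $\tfrac{Q_a}{Q_b}\le\tfrac{C_a(I)}{C_b(I)}$; plain monotonicity alone is not enough, because the bound it gives on the correction ratio points the wrong way, and it is the finer $\{E_i^j\}$/$\{D_i\}$ structure together with the inductive hypothesis that does the real work. The two points requiring the most care are the bookkeeping of first-row counts (hence of the coefficient indices) through the freezing/non-freezing dichotomy, and the exact match between the plateau length $w(I)$ and the threshold $b>w(I)$ at which the inequality becomes strict.
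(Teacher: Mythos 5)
Your structural setup is correct, and it is genuinely different from the paper's. I checked it on an example: for $I=\{2,3\}$, $J=\{3,4\}$ one has $\lambda^I=(2,2,2)$, $\lambda^J=(3,3,3)$, hooks of inherited cells are preserved, and the frozen/non-frozen split of excited diagrams of $\lambda^J/\mu^J$ gives $C_k(J)=P\,C_k(I)+Q_k$ with $P=h_{\lambda^J}(c_{2,1})=4$, $(C_0,C_1)(I)=(4,2)$, $(C_0,C_1,C_2)(J)=(18,12,12)$, hence $(Q_0,Q_1,Q_2)=(2,4,12)$; cross-multiplying then correctly reduces the proposition to $Q_a\,C_b(I)<Q_b\,C_a(I)$. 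By contrast, the paper reduces to consecutive ratios $C_{k,k+1}$ (using $C_{a,b}=\prod_{i=a}^{b-1}C_{i,i+1}$) and decomposes by the \emph{top row} rather than the new leftmost column, via $D_i(I)=C_{i-w(I)}(I-)$ and the sub-shape excitation factors $E_i^j$, running an induction whose hypothesis is applied to the pair $(I-,\,J-)$.

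The genuine gap is that the inequality $Q_a\,C_b(I)<Q_b\,C_a(I)$ — which is where the entire difficulty of the proposition lives — is never proved. You assert that an induction on the number of rows together with Proposition \ref{prop:jm19-2.4} "yields the weak bound $Q_a/Q_b\le C_a(I)/C_b(I)$," but you never state what the inductive hypothesis actually says: the statement being proved concerns ratios $C_{a,b}$ of Naruse--Newton coefficients, and the $Q_k$ are \emph{not} Naruse--Newton coefficients of any ribbon (in the example above, $Q_k=2\cdot(1,2,6)_k$, the excitation coefficients of the sub-shape $(3,3)/(2)$ computed with ambient hooks of $(3,3,3)$ — exactly the $E_i^j$-type objects you would still have to define, identify, and compare), so the hypothesis does not apply to them without the very bookkeeping you defer. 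Indeed you concede that "plain monotonicity alone is not enough, because the bound it gives on the correction ratio points the wrong way." The strictness discussion is also not an argument: noting that $b>w(I)$ places $b$ past the plateau of Proposition \ref{prop:strong-3.6} does not upgrade a weak inequality to a strict one; in the paper's proof strictness comes from a concrete extra positive term (the summand involving $D_{s+1}(J)$, which exists because $\lambda^J$ has one more column than $\lambda^I$) surviving the comparison even when all the ratio inequalities from the inductive hypothesis degenerate to equalities, which is what makes the base case work. You also never verify $Q_b>0$, which your division by $Q_b$ requires. So what remains in your plan — formulating the correct inductive statement, identifying the $Q_k$ structurally, and extracting a strict comparison — is essentially the whole proof.
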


\begin{proof}
It is sufficient to prove that $C_{i, i+1}(I)>C_{i, i+1}(J)$ for all $s > i \ge w(I)$, as $C_{a, b}(I) = \displaystyle\prod_{i=a}^{b-1}C_{i, i+1}(I)$.

Our proof is structured as follows: We examine the ratio $\frac{C_k}{C_{k+1}}$ for a fixed nonnegative integer $k$ with $s>k \ge w(I)$.
We begin by expressing $C_{k}$ and $C_{k+1}$ in terms of $D_i$ and $E_i^j$. We can then examine changes between terms in the ratios $C_{k, k+1}(I)$ and $C_{k, k+1}(J)$, concluding via induction that $C_{k, k+1}(I)>C_{k, k+1}(J)$.

For $i \in [\lambda_1]$, define $H_i = h_{\lambda^I}(c_{2, i})$ and for $i \in [\lambda'_{s-k+1}]$, define $H'_i = h_{\lambda^I}(c_{i, s-k+1})$.  These are defined with respect to descent set $I$, and are also expressed as $H_i(I)$ and $H'_i(I)$.

Let $v = \lambda'_{s+1-k}-2$ and let $G$ denote the hook product of all cells of $\mathbb{D}_{\operatorname{rib}}(\lambda^I)$ weakly below and to the right of $C_{2, s-k+2}$. Recall the definitions for sequences $\{D_i\}$ and $\{E_i^j\}$. We begin by finding $C_k$ by examining the set of excited diagrams of $\mathbb{D}_{\operatorname{rib}}(\lambda^I)$ which contain $s-k$ cells in their first row. We can partition this set into two subsets: Excited diagrams which contain $c_{2, s-k}$, and excited diagrams which do not. Thus, 
$$C_k = G \cdot E^{k+2}_1 \cdot \displaystyle\prod_{i=2}^{v+1} H'_{i+1} + D_k \cdot \frac{\displaystyle\prod_{i=1}^{s+1}H_i}{h_{s-k+1}}.$$
Furthermore, we can express $C_{k+1}$ in terms of $E^{k+2}_1$ by separating cells into those to the left of the $(s-k)$th row, those in the $(s-k)$th row, and those to the right of the $(s-k)$th row. We now have $$C_{k+1} = G \cdot E^{k+2}_1 \cdot  \displaystyle\prod_{i=1}^{v+1}H'_{i+1}.$$ 
We can then expand $E^{k+2}_1$ into a sequence involving some $D_i$ by expressing it in a Naruse-Newton basis, similar to the process in Section 3. Thus, $$E^{k+2}_1 = \frac{\displaystyle\prod_{i=s-k+2}^{s+1}H_i}{G}\left(D_{k+1}\left(\displaystyle\prod_{i=1}^{s-k-1}H_i\right) + D_{k+2}\left(\displaystyle\prod_{i=1}^{s-k-2}H_i \right) + \cdots +D_s \right).$$

Through these previous expressions, we can now take the ratio
\begin{align*}
\frac{C_k}{C_{k+1}} &= \frac{G\cdot E^{k+2}_1 \cdot \displaystyle\prod_{i=2}^{v+1} H'_{i+1} + D_k \cdot \frac{\displaystyle\prod_{i=1}^{s+1}H_i}{h_{s-k+1}}}{G\cdot E^{k+2}_1 \cdot \displaystyle\prod_{i=1}^{v+1} H'_{i+1}}
 \\ &= \frac{1}{H'_{2}} + \frac{D_k \cdot \displaystyle\prod_{i=1}^{s-k}H_i}{\left(\displaystyle\prod_{i=1}^{v+1}H'_{i+1}\right)\displaystyle\left(\sum_{i=k+1}^s \left(D_i \cdot \displaystyle\prod_{j=1}^{s-i}H_j \right)\right)}.
\end{align*}

We proceed by induction. For some integer $h$, assume $C_{i, i+1}(J) < C_{i, i+1}(I)$ for all deep descent sets with $(\lambda^I)_1 \le h$. We will prove $C_{i, i+1}(J) < C_{i, i+1}(I)$ for all deep descent sets with $(\lambda^I)_1= h+1$. The base case occurs when $h=3$, which we will show the following process continues to hold true. Note that since $\mathbb{D}_{\operatorname{rib}}(\lambda^I[1])$ is a shallow descent set, $\frac{D_i(J)}{D_k(J)}=\frac{D_i(I)}{D_k(I)}$.

Take a descent set $I$ such that  $\mathbb{D}_{\operatorname{rib}}(\lambda^I)$ with height $h+1$. To form this ribbon from a ribbon of height $h$, we take the ribbon consisting of the cells weakly below $\lambda_2$ and inclusively between $\lambda'_1$ and $\lambda'_{s+1-w}$. Then, add a row to the top of this ribbon, and add a $w(I) \times 2$ rectangle of cells to its upper right corner.

To maintain uniformity with $\mathbb{D}_{\operatorname{rib}}(\lambda^I)$, we re-number the columns of $\mathbb{D}_{\operatorname{rib}}(\lambda^J)$ from $0$ to $s+1$. Thus, for $i \in [\lambda_1]$, we have that $H_i(I) = H_i(J)$, and for $i \in [\lambda'_{s-k+1}]$, we have that $H'_i(I) = H'_i(J)$. Furthermore, the value  $v=\lambda'_{s+1-k}-2$ remains the same for both ribbons. We now compare specific terms to show $C_{i, i+1}(J) < C_{i, i+1}(I)$. First, for all $i>k$, our assumption implies that $$\frac{D_i(J)}{D_k(J)}>\frac{D_i(I)}{D_k(I)}.$$

Since for all $j$ satisfying $\lambda_1>j \ge 1$, it holds that $H_j(I) = H_j(J)$, we can then sum the previous expression across all $i$ for $s\ge i\ge k+1$ to generate the inequality $$\frac{\displaystyle\sum_{i=k+1}^s\left( D_i(J)\displaystyle\prod_{j=1}^{s-i}H_j(J)\right)}{D_k(J)} + \frac{D_{s+1}(J)}{D_k(J) \cdot H_0} > \frac{\displaystyle\sum_{i=k+1}^s\left( D_i(I)\displaystyle\prod_{j=1}^{s-i}H_j(I)\right)}{D_k(I)}.$$

Note that the above expression holds even if $\frac{D_i(J)}{D_k(J)}=\frac{D_i(I)}{D_k(I)}$, so the following equations are also satisfied for the base case. By rearranging terms, we have that $$\frac{D_k(I)}{\displaystyle\sum_{i=k+1}^s \left(D_i(I) \cdot \displaystyle\prod_{j=1}^{s-i}H_j(I) \right)}> \frac{D_k(J) \cdot H_0(J)}{H_0(J) \cdot \displaystyle\sum_{i=k+1}^s \left(D_i(J) \cdot \displaystyle\prod_{j=1}^{s-i}H_j(J) \right) + D_{s+1}(J)}.$$

By multiplying both sides of the relation by $$\frac{\displaystyle\prod_{i=1}^{s-k}H_i(I)}{\displaystyle\prod_{i=1}^{v+1}H'_{i+1}(I)} = \frac{\displaystyle\prod_{i=1}^{s-k}H_i(J)}{\displaystyle\prod_{i=1}^{v+1}H'_{i+1}(J)}$$ and adding $\frac{1}{H'_2(I)} = \frac{1}{H'_2(J)}$ to both sides, we generate the expressions for $C_{k, k+1}(I)$ and $C_{k, k+1}(J)$. Hence, we conclude $C_{k, k+1}(I) > C_{k, k+1}(J)$. This completes the induction and proves the proposition. 
\end{proof}

Corollary 3.6 of \cite{jm19} introduces an example of descent sets which reach the minimum ratio of $C_{a, b}$, but does not fully characterize all descent sets which obtain this ratio. One might wonder if there are any other descent sets not covered in Proposition \ref{prop:strong-3.6} (a strengthened version of the aforementioned Corollary 3.6), but there are in fact no others.  Proposition \ref{prop:add-row} finds descent sets with smaller ratios than a given set for $b>w(I)$, and we can use this to prove that the minimum of $C_{a, b}$ is never reached when $b>w(I)$. Theorem \ref{thm:num-min} expands on this concept, finding all $a$ and $b$ for fixed $I$ such that $C_{a, b}(I) = \frac{a!}{b!}$.

\begin{theorem} \label{thm:num-min}
Let $I$ be a non-empty descent set of positive integers. Let $s = (\lambda^I)_1-1$ and $(C_0, C_1, \ldots, C_s)$ be the Naruse-Newton coefficients of $I$, and let $w=w(I)$ where $w(I)$ is defined as above. Then,  $$\frac{C_0}{0!} = \frac{C_1}{1!} = \cdots = \frac{C_w}{w!}>\frac{C_{w+1}}{(w+1)!}>\cdots>\frac{C_s}{s!}.$$
\end{theorem}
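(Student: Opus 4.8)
The plan is to combine the three facts already in hand---the weak monotonicity of Proposition~\ref{prop:jm19-2.4}, the equality criterion of Proposition~\ref{prop:strong-3.6}, and the strict-decrease mechanism of Proposition~\ref{prop:add-row}---to locate exactly where the chain $\frac{C_0}{0!} \ge \cdots \ge \frac{C_s}{s!}$ is flat and where it strictly drops. Since Proposition~\ref{prop:jm19-2.4} already supplies weak decrease throughout, it suffices to decide, for each adjacent pair, whether $C_{i,i+1}(I) = \frac{1}{i+1}$ or $C_{i,i+1}(I) > \frac{1}{i+1}$. The parameter $w = w(I)$ is the cutoff, so the first step is to read it off the geometry of $\lambda = \lambda^I$: because $\lambda'$ is weakly decreasing and every one of the $s+1$ columns has height at least $2$ (the normalization $(\lambda^I)_1 = (\lambda^I)_2$ forces rows $1$ and $2$ to reach column $s+1$), the $w$ columns of height exactly $2$ must be the rightmost $w$ columns, namely columns $s+2-w, \ldots, s+1$, with column $s+1-w$ of height at least $3$ when $w \le s$.

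For the equality block $\frac{C_0}{0!} = \cdots = \frac{C_w}{w!}$, I would invoke Proposition~\ref{prop:strong-3.6} with the index $i = s+1-w$. With this choice $\lambda'_{i+1} = \lambda'_{s-w+2} = 2$ (the leftmost height-$2$ column) and $\lambda'_{s+1} = 2$ (the rightmost column), so the hypothesis holds whenever $1 \le w \le s$; since $s-i+1 = w$, the proposition yields precisely $\frac{C_0}{0!} = \cdots = \frac{C_w}{w!}$. The case $w = 0$ is vacuous for this block, and the shallow cases $w \in \{s, s+1\}$ are recovered by instead taking $i = 1$, which gives equality all the way up to $\frac{C_s}{s!}$.

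For the strict tail $\frac{C_w}{w!} > \cdots > \frac{C_s}{s!}$, which is non-empty exactly when $w \le s-1$ (so that $I$ is deep), I would prove $C_{i,i+1}(I) > \frac{1}{i+1}$, equivalently $\frac{C_i}{i!} > \frac{C_{i+1}}{(i+1)!}$, for each $w \le i \le s-1$. Setting $J = \{x+1 \mid x \in I\}$ and applying Proposition~\ref{prop:add-row} with $a = i$ and $b = i+1$ gives $C_{i,i+1}(J) < C_{i,i+1}(I)$, while Proposition~\ref{prop:jm19-2.4} applied to the non-empty set $J$ gives $C_{i,i+1}(J) \ge \frac{1}{i+1}$; chaining the two forces $C_{i,i+1}(I) > \frac{1}{i+1}$. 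The conceptual core is this: if the minimum ratio were attained at some index $i \ge w$, the shift $I \mapsto J$ would push that ratio strictly below the universal lower bound of Proposition~\ref{prop:jm19-2.4}, which is impossible.

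The main obstacle is bookkeeping rather than a new idea. I must verify that the hypotheses $s \ge b > w(I)$ and $b > a \ge 0$ of Proposition~\ref{prop:add-row} hold for every $i$ in the range $w \le i \le s-1$ (they do, since then $i+1 > w$ and $i+1 \le s$), confirm that $I$ is genuinely deep across this range (automatic from $w \le s-1$), check that the shifted set $J$ carries enough coefficients for $C_{i+1}(J)$ to be defined, and treat the boundary and shallow cases ($w \in \{s, s+1\}$, where the strict chain is empty and all ratios coincide) so that the stated chain degenerates correctly. Assembling the equality block of the second paragraph with the strict tail of the third then yields the theorem.
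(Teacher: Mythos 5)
Your proposal is correct and follows essentially the same route as the paper: the equality block comes from Proposition~\ref{prop:strong-3.6} (the paper implicitly uses the same index choice you make explicit), and the strict tail comes from combining Proposition~\ref{prop:add-row} with the lower bound of Proposition~\ref{prop:jm19-2.4}, so that a deep set attaining the minimum at some index $\ge w$ would force the shifted set $J$ below the universal bound. The only difference is presentational: the paper argues for general pairs $(a,b)$ with $b>w$ that no deep descent set can attain $\frac{a!}{b!}$, whereas you run the identical contradiction pairwise on adjacent ratios $C_{i,i+1}$, which suffices equally well and carries out the hypothesis bookkeeping (e.g.\ $s(J)=s(I)+1$) more explicitly than the paper does.
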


\begin{proof} If $I$ is a shallow descent set, then $w \ge s$, and our result follows from Proposition \ref{prop:strong-3.6}.

For the remainder of this proof, assume $I$ is a deep descent set. It is evident by Propositions \ref{prop:jm19-2.4} and \ref{prop:strong-3.6} that $$\frac{C_0}{0!} = \frac{C_1}{1!} = \cdots = \frac{C_w}{w!} \ge \frac{C_{w+1}}{(w+1)!}\ge \cdots \ge\frac{C_s}{s!}.$$
Fix nonnegative integers $a$ and $b$ such that $s \ge b>w$ and $b>a \ge 0$. It is sufficient to prove that there does not exist a deep descent $I$ such that $$C_{a, b}(I) = \frac{a!}{b!}.$$

We claim that for all deep descent sets $I$, there exists another deep descent set $J$ such that $C_{a, b}(J) < C_{a, b}(I)$. This clearly suffices, as then there does not exist a deep descent set with a minimum ratio.

This claim is evident by Proposition \ref{prop:add-row}: for each deep descent set $I$, another deep descent set $J$ with a lower ratio can be formed by taking the descent set corresponding to the ribbon formed by appending a column of height $\lambda'_1$ to the left of $\mathbb{D}_{\operatorname{rib}}(\lambda^I)$. Hence, our claim, and our theorem, are proven.
\end{proof}

Furthermore, we can extend upon Theorem \ref{thm:num-min} to find a formal characterization of all descent sets $I$ which satisfy $C_{a, b}(I) = \frac{a!}{b!}$.

\begin{corollary} \label{cor:min-char}
Let $I$ be a non-empty descent set of positive integers. Let $s = (\lambda^I)_1-1$ and $(C_0, C_1, \ldots, C_s)$ be the Naruse-Newton coefficients of $I$, and let $w(I)$ be defined as above. Let positive integers $a$ and $b$ be defined such that $s \ge b > a \ge 0$. Then, $C_{a, b}(I) = \frac{a!}{b!}$ if and only if $w(I) \ge b$.
\end{corollary}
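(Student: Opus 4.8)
The plan is to derive this corollary almost directly from Theorem \ref{thm:num-min}, which already locates exactly where the normalized sequence $\frac{C_0}{0!}, \frac{C_1}{1!}, \ldots, \frac{C_s}{s!}$ is constant and where it is strictly decreasing. The first step is to restate the target condition. Since $C_{a,b}(I) = \frac{C_a}{C_b}$, the equation $C_{a,b}(I) = \frac{a!}{b!}$ is equivalent to $\frac{C_a}{a!} = \frac{C_b}{b!}$; in other words, the equality case of the bound $C_{a,b}(I) \ge \frac{a!}{b!}$ (which follows from Proposition \ref{prop:jm19-2.4}) holds precisely when the two normalized coefficients at indices $a$ and $b$ coincide. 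So the whole corollary reduces to asking when $\frac{C_a}{a!} = \frac{C_b}{b!}$, and Theorem \ref{thm:num-min} answers this directly.

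For the backward direction, I would suppose $w(I) \ge b$. Then $0 \le a < b \le w$, so both indices $a$ and $b$ lie in the initial block $\{0, 1, \ldots, w\}$ on which Theorem \ref{thm:num-min} asserts $\frac{C_0}{0!} = \frac{C_1}{1!} = \cdots = \frac{C_w}{w!}$. In particular $\frac{C_a}{a!} = \frac{C_b}{b!}$, which by the reformulation gives $C_{a,b}(I) = \frac{a!}{b!}$, as desired.

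For the forward direction I would argue the contrapositive: assume $w(I) < b$, i.e. $b \ge w+1$. Write $m = \max(a, w)$. Then the indices from $a$ up to $m$ all lie in $\{0,\ldots,w\}$ (or collapse to $a$ itself when $a > w$), so the constant part of Theorem \ref{thm:num-min} gives $\frac{C_a}{a!} = \frac{C_m}{m!}$. Moreover $m \ge w$ and $m < b$, so the pair $(m,b)$ lies entirely in the strictly decreasing tail $\frac{C_w}{w!} > \frac{C_{w+1}}{(w+1)!} > \cdots > \frac{C_s}{s!}$, whence $\frac{C_m}{m!} > \frac{C_b}{b!}$. Chaining these gives $\frac{C_a}{a!} > \frac{C_b}{b!}$, so $C_{a,b}(I) > \frac{a!}{b!}$ and equality fails.

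Combining the two directions yields the claimed equivalence $C_{a,b}(I) = \frac{a!}{b!} \iff w(I) \ge b$. I expect essentially no serious obstacle here, since Theorem \ref{thm:num-min} does all the real work; the only point requiring care is the bookkeeping in the forward direction, namely confirming that once $b > w$ a genuinely strict step of the chain always falls between indices $a$ and $b$. This splits into the subcases $a \le w < b$ and $w < a < b$, but the device of introducing $m = \max(a,w)$ handles both uniformly, so the argument should remain short and clean.
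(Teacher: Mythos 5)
Your proposal is correct, and its backward direction coincides with the paper's: both read off the case $w(I) \ge b$ from the constant initial block of Theorem \ref{thm:num-min}. For the forward direction, however, you take a genuinely more direct route. You treat Theorem \ref{thm:num-min} as a black box: setting $m = \max(a, w)$, the constant block gives $\frac{C_a}{a!} = \frac{C_m}{m!}$, and since $w \le m < b \le s$ the strictly decreasing tail gives $\frac{C_m}{m!} > \frac{C_b}{b!}$, so $C_{a,b}(I) > \frac{a!}{b!}$ whenever $b > w(I)$. The paper instead reaches back into the machinery behind the theorem: assuming $b > w(I)$, it first disposes of shallow descent sets (for which $b > s$, so no valid $b$ exists), then re-applies Proposition \ref{prop:add-row} to produce a descent set $J$ with $C_{a,b}(J) < C_{a,b}(I)$, concluding that $I$ cannot attain the global minimum $\frac{a!}{b!}$ guaranteed by Proposition \ref{prop:jm19-2.4}. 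The two arguments are equivalent in substance, since the strict inequalities in Theorem \ref{thm:num-min} were themselves established via Proposition \ref{prop:add-row}, but yours is the more economical derivation: it exhibits the corollary as a purely formal consequence of the theorem's statement, and it avoids the shallow/deep case split altogether, because $w(I) < b \le s$ already forces $I$ to be deep. The one point needing care, that a strict step of the chain always falls between indices $a$ and $b$ once $b > w$, is exactly what your device $m = \max(a, w)$ handles, and it handles both subcases $a \le w$ and $a > w$ correctly.
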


\begin{proof}
The direct assertion follows from Theorem \ref{thm:num-min}. The converse is also quite evident:  Assume $b>w(I)$. If $I$ is a shallow descent set, then $b>s$, for which there cannot exist a $b$. For the remainder of this proof, assume $I$ is a deep descent set. Similar to the proof of Theorem \ref{thm:num-min}, we can apply Proposition \ref{prop:add-row} to conclude. In particular, there must exist a descent set $J$ for each deep descent set $I$ with $w(I)<b$, such that $C_{a, b}(J)<C_{a, b}(I)$. Thus, each deep descent set $I$ with $w(I)<b$ cannot attain the minimum ratio, which is as desired.
\end{proof}

Using our result from Theorem \ref{thm:num-min}, we can also find all descent sets $I$ such that $C_{a, b}(I)$ for all $a$ and $b$ such that $s\ge b > a \ge 0$. This proves a major distinction between shallow descent sets and deep descent sets.

\begin{corollary} \label{cor:shallow}
Let $I$ be a non-empty set of positive integers, and define $\lambda = \lambda^I$ where $\lambda^I$ is defined in Section 2. Let $s = \lambda_1-1$ and $(C_0, C_1, \ldots, C_s)$ be the Naruse-Newton coefficients of $I$. The equality $\frac{C_0}{0!} = \frac{C_1}{1!} = \cdots = \frac{C_s}{s!}$ is satisfied if and only if $I$ is a shallow descent set.
\end{corollary}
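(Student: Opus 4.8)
The plan is to deduce this equivalence directly from Theorem \ref{thm:num-min}, which already pins down the exact shape of the chain of normalized coefficients. That theorem shows that $\frac{C_i}{i!}$ is constant for $0 \le i \le w(I)$ and then strictly decreasing at each step from index $w(I)$ up to $s$. Consequently, the full chain $\frac{C_0}{0!} = \frac{C_1}{1!} = \cdots = \frac{C_s}{s!}$ of equalities holds if and only if the strictly decreasing portion is empty, that is, if and only if $w(I) \ge s$.

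It then remains only to match the arithmetic condition $w(I) \ge s$ with the stated descent-set dichotomy. By Definition \ref{def:deep}, $I$ is a deep descent set precisely when $w(I) < s$, so $I$ is shallow precisely when $w(I) \ge s$. I would record the two directions explicitly. In the forward direction I argue contrapositively: if $I$ is deep then $w(I) < s$, and Theorem \ref{thm:num-min} produces the strict inequality $\frac{C_{w(I)}}{w(I)!} > \frac{C_{w(I)+1}}{(w(I)+1)!}$, which breaks the claimed chain of equalities. In the backward direction, if $I$ is shallow then $w(I) \ge s$, so the constant portion of the chain already covers all indices $0, \ldots, s$ and every equality holds.

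Since all of the combinatorial content is carried by Theorem \ref{thm:num-min}, I do not anticipate a substantive obstacle. The only point requiring care is the boundary bookkeeping: one must confirm that the case $w(I) = s$ falls on the shallow side (the strictly decreasing portion, which would start at index $w(I)+1 = s+1$, is vacuous, so equality persists across all of $0, \ldots, s$), and that the index $w(I)$ is tracked consistently with its definition throughout. This is a matter of careful indexing rather than any new idea.
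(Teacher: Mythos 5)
Your proposal is correct and takes essentially the same route as the paper: both reduce the corollary to Theorem \ref{thm:num-min} plus the observation that, by Definition \ref{def:deep}, $I$ is deep exactly when $w(I) < s$, with the paper citing Proposition \ref{prop:strong-3.6} for the shallow direction (the same content, since Theorem \ref{thm:num-min} reduces to that proposition when $w(I) \ge s$). Your handling of the boundary case $w(I) = s$, where the strictly decreasing portion is vacuous, matches the intended reading.
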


\begin{proof}
Theorem \ref{thm:num-min} shows there do not exist any deep descent sets that satisfy $\frac{C_0}{0!} = \frac{C_1}{1!} = \cdots = \frac{C_s}{s!}$, since $w(I)<s$. Hence, for shallow descent sets, it holds that $w(I) \ge s$, and our result follows from Proposition \ref{prop:strong-3.6}.
\end{proof}

\bigskip
\section{Sets of All $C_{a, b}$}

For all integers $a, b$ such that $0 \le a < b$, define the set $$R_{a, b} = \left\{C_{a, b}(I) \mid I \textrm{ descent set and } b \le s\right\}.$$

In this section, we derive some properties of $R_{a, b}$, most importantly that it is unbounded above. Corollary \ref{cor:closure} may also be useful in finding the closure of $R_{a, b}$.

We now seek to find properties of $R_{a, b}$. First, the extrema: it is already shown in Proposition \ref{prop:jm19-2.4} that $\mathrm{min}(R_{a, b}) = \frac{a!}{b!}$, and sets which attain this minimum value are characterized in Corollary \ref{cor:min-char}. We now show that $R_{a, b}$ is unbounded above.

For a descent set $I$, define $\phi(I) = \{i+1\, | \, i \in I\} \cup \{1\}$ and $\psi(I) = \{i-g\, |\, i>g \textrm{ and } i \in I\}$, where $g$ is the least positive integer such that $g \not\in I$. In other terms, the function $\phi$ appends a cell to the bottom of the leftmost column of  $\mathbb{D}_{\operatorname{rib}}(\lambda^I)$ (as defined in Section 2), while the function $\psi$ entirely removes the leftmost column of $\mathbb{D}_{\operatorname{rib}}(\lambda^I)$.

\begin{theorem} \label{thm:no-max}
Let $I$ be a non-empty descent set of positive integers with $n = \max(I)+1$, and define $\lambda = \lambda^I$ where $\lambda^I$ is defined in Section 2. Let $s = \lambda_1-1$ and $(C_0, C_1, \ldots, C_s)$ be the Naruse-Newton coefficients of $I$. Let $C_{a, b}(I) = \frac{C_a(I)}{C_b(I)}$ for a set $I$, and let $\phi$ and $\psi$ be defined as above. Define $C_{s+1}(I) = 0$. Take nonnegative integers $a$ and $b$ such that $b > a \ge 0$, and either $s=b$ if $\lambda_1=2$ or $s > b$ if $\lambda_1>2$. Then, $$\displaystyle\lim_{n \rightarrow \infty} C_{a, b}(\phi^n(I))= \begin{cases} \infty, & \lambda_1 = 2, \\ C_{a, b}(\psi(I)), & \lambda_1>2. \end{cases}$$
\end{theorem}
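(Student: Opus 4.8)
The plan is to work entirely through the combinatorial interpretation of the Naruse-Newton coefficients recorded in Section~3: for each $i$ with $0\le i\le s$, the coefficient $C_{s-i}(I)$ equals the sum, over all excited diagrams $D$ of $\mathbb{D}(\lambda^I/\mu^I)$ having exactly $i$ cells in the first row, of the hook product $\prod_{c\in\overline D}h_{\lambda^I}(c)$ of the cells of $D$ outside the first row. The starting observation is the effect of $\phi$ on hook lengths. Since $\phi$ appends one cell to the bottom of the leftmost column, applying $\phi^n$ raises the hook length of every cell of the leftmost column by exactly $n$ and leaves the hook length of every other cell fixed, because a cell in a column $c\ge 2$ only counts cells weakly to its right and weakly below it, none of which lie in column $1$. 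Hence each $C_{s-i}(\phi^n(I))$ is a nonnegative-coefficient polynomial in $n$, and I would first determine its degree.

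To control the degree I would pin down the column-$1$ and row-$1$ structure of an excited diagram. An excited move sends $c_{r,1}$ to $c_{r+1,2}$ and never moves a cell into column $1$, so the cells of any excited diagram $D$ in column $1$ form a top-justified segment $c_{1,1},\dots,c_{k,1}$ with $0\le k\le(\mu^I)'_1$, and its first-row cells form a left-justified segment $c_{1,1},\dots,c_{1,i}$; in particular $i\ge 1$ forces $c_{1,1}\in D$ and hence $k\ge 1$, while $i=0$ forces $k=0$. The only growing hook lengths in $\prod_{c\in\overline D}h_{\lambda^I}(c)$ are those of the frozen column-$1$ cells in rows $2,\dots,k$, so this product has degree exactly $k-1$ in $n$. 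Thus the leading term of $C_{s-i}(\phi^n(I))$ comes from the excited diagrams freezing the entire leftmost column ($k=(\mu^I)'_1$), and equals $n^{(\mu^I)'_1-1}$ times the sum of hook products of the cells lying in both rows $\ge 2$ and columns $\ge 2$.

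The key step is to identify these dominant contributions with the coefficients of $\psi(I)$. Deleting the leftmost column sends an excited diagram of $\mathbb{D}(\lambda^I/\mu^I)$ that freezes all of column $1$ to an excited diagram of $\mathbb{D}(\lambda^{\psi(I)}/\mu^{\psi(I)})$, and I would argue this is a bijection onto all excited diagrams of $\psi(I)$, turning $i$ first-row cells into $i-1$ first-row cells. Crucially, deleting column $1$ preserves the hook length of every surviving cell, since a cell in a column $c\ge 2$ has the same cells weakly below and weakly to its right before and after the deletion. Because $\psi$ lowers $\lambda_1$ by one, the top index of $\psi(I)$ is $s-1$, so the leading coefficient of $C_{s-i}(\phi^n(I))$ is exactly $C_{(s-1)-(i-1)}(\psi(I))=C_{s-i}(\psi(I))$, valid precisely when $i\ge 1$.

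Finally I would assemble the two cases of the ratio $C_{a,b}(\phi^n(I))=C_a(\phi^n(I))/C_b(\phi^n(I))$, writing $C_a=C_{s-(s-a)}$ and $C_b=C_{s-(s-b)}$. When $\lambda_1>2$ the hypothesis gives $b<s$, so both $s-a$ and $s-b$ are at least $1$; numerator and denominator carry the same leading power $n^{(\mu^I)'_1-1}$ with positive leading coefficients, the power cancels, and the limit is $C_a(\psi(I))/C_b(\psi(I))=C_{a,b}(\psi(I))$. When $\lambda_1=2$ the hypothesis gives $b=s$, so the denominator $C_s$ is built from excited diagrams with no first-row and hence no column-$1$ cells; its hook product never involves a growing hook and stays bounded, whereas the numerator $C_a$ (with $s-a\ge 1$) grows without bound in $n$, forcing the ratio to $\infty$. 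I expect the main obstacle to be the bijection-and-hook-preservation step: one must verify carefully that freezing the whole leftmost column corresponds to the \emph{complete} excited-diagram set of $\psi(I)$ and that the index bookkeeping $i\mapsto i-1$, $s\mapsto s-1$ lines up, so that the surviving leading coefficient is exactly the claimed coefficient of $\psi(I)$ and not some rescaling of it.
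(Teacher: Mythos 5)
Your plan follows the same route as the paper's proof: treat each $C_j(\phi^n(I))$ as a polynomial in the iteration count $n$, observe that only the hooks of the frozen column-$1$ cells grow, and identify the leading coefficient via the delete-column-$1$ bijection with the excited diagrams of $\psi(I)$. Your $\lambda_1>2$ case is correct, and is in fact tighter than the paper's own write-up: the degree you compute, $(\mu^I)'_1-1$, is the right one (it agrees with the paper's $\lambda'_2-2$, since $(\mu^I)'_1=(\lambda^I)'_2-1$ for a ribbon), the hook-preservation claim and the index shift $C_{s-i}\mapsto C_{(s-1)-(i-1)}$ are exactly what make the limit come out to $C_{a,b}(\psi(I))$ with no stray factor, and the hypothesis $s>b$ is precisely what guarantees that both leading coefficients $C_a(\psi(I))$ and $C_b(\psi(I))$ are defined and positive.

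The gap is in your $\lambda_1=2$ assembly, at the sentence ``the numerator $C_a$ (with $s-a\ge 1$) grows without bound in $n$.'' By your own degree formula the numerator has degree $(\mu^I)'_1-1$ in $n$, and nothing in the hypotheses forces $(\mu^I)'_1\ge 2$: when $(\lambda^I)'_2=2$ (equivalently $(\mu^I)'_1=1$, i.e.\ $I$ is a \emph{shallow} two-column descent set) the numerator is constant and the ratio does not diverge. Concretely, $I=\{2\}$ has $\lambda^I=(2,2)$, $\mu^I=(1)$, and $\phi^n(\{2\})=\{1,\dots,n\}\cup\{n+2\}$; the paper's own Appendix A lists $C_0=C_1=1$ for every set in this family, so $C_{0,1}(\phi^n(\{2\}))\equiv 1$ and the limit is $1$, not $\infty$. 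So this case of the theorem is false as stated and no argument can close the gap without adding the hypothesis that $I$ is deep, i.e.\ $(\lambda^I)'_2>2$. For what it is worth, the paper's proof commits the same error in a less visible way: it asserts that the degree of $C_0$ in $n$ equals $\lambda'_2$, which is off by two (it counts $c_{1,1}$, whose growing hook is absorbed into the Newton-basis factors rather than into $C_0$, and it takes the column-$1$ size of the initial excited diagram $\mathbb{D}(\mu^I)$ to be $\lambda'_2$ instead of $\lambda'_2-1$), and from this incorrectly positive degree concludes divergence unconditionally. Your derivation actually contains the correction; with ``$I$ deep'' added to the hypotheses, your argument goes through verbatim in both cases.
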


\begin{proof}
Take $\lambda_1 = 2$ and variable $n$. The only $a$ and $b$ which satisfy $s = b > a \ge 0$ are $b=1$ and $a=0$, so we only must examine the ratio $C_{0, 1}$. Observe that $C_0$ is a polynomial in $n$, and its degree is equal to the maximum number of cells of the form $c_{k, 1}$ over all excited diagrams of $\mathbb{D}_{\operatorname{rib}}(\lambda^I)$ which contain $c_{1, 1}$. This value is equal to $\lambda'_2$, as seen in the initial excited diagram. Meanwhile, $C_1$ does not involve $n$, as the only excited diagram of $\mathbb{D}_{\operatorname{rib}}(\lambda^I)$ with no cells of the form $c_{1, k}$ also does not contain any cell of the form $c_{k, 1}$. Hence, for $\lambda_1=2$, $$\displaystyle\lim_{n \rightarrow \infty}C_{a, b}(\phi^n(I)) = \infty.$$

Next, assume $\lambda_1>2$. Note that for all $s>i \ge 0$, the coefficient $C_i$ must be a polynomial in $n$ of degree $\lambda'_2-2$. This is because the maximum number of cells in the form $c_{k, 1}$ over all excited diagrams of $\mathbb{D}_{\operatorname{rib}}(\lambda^I)$ with $i$ cells in their uppermost row must be equal to $\lambda'_2-2$. Thus, it is only necessary to examine the term of degree $\lambda'_2-2$ in the polynomial representations of $C_a$ and $C_b$. However, there exists a bijection between the set of  excited diagrams with fixed leftmost column and the set of excited diagrams of $\mathbb{D}_{\operatorname{rib}}(\lambda^I)$ without its leftmost column. Hence, for all $0 \le a \le s$, $$\displaystyle\lim_{n \rightarrow \infty} C_a(\phi^n(I)) = C_a(\psi(I)) \cdot \displaystyle\prod_{k=2}^{\lambda'_1} h_{(\phi^n(I))}(c_{k, 1}).$$
This directly results in the desired ratio for $\lambda_1>2$.
\end{proof}

This theorem not only implies that $R_{a, b}$ is unbounded above, but that for every point $p \in R_{a, b}$ there exists a sequence of pairwise distinct points in $R_{a, b}$ converging to $p$. The natural next step to the convergence property of this construction is determining the closure of $R_{a, b}$. For any set $S \subseteq \mathbb{R}$, we define $\overline{S}$ to be the closure in the Euclidean topology of a set $S$. Using the construction from Theorem \ref{thm:no-max}, we now derive a property of $\overline{R_{a, b}}$.

\begin{corollary} \label{cor:closure}
For any integers $a$ and $b$ such that $a<b$ and subset $R' \subseteq R_{a, b}$ such that $|R_{a, b}-R'|$ is finite, we have that $\overline{R'} = \overline{R_{a, b}}$.
\end{corollary}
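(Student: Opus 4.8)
The plan is to read Corollary~\ref{cor:closure} as a soft topological consequence of the accumulation property established by Theorem~\ref{thm:no-max}. Since $R' \subseteq R_{a,b}$ and the closure operator is monotone, the inclusion $\overline{R'} \subseteq \overline{R_{a,b}}$ is immediate, so the whole content lies in the reverse inclusion $\overline{R_{a,b}} \subseteq \overline{R'}$. Because $\overline{\overline{R'}} = \overline{R'}$, it is enough to prove the stronger statement $R_{a,b} \subseteq \overline{R'}$; that is, I would show that every realized ratio $p = C_{a,b}(I_0)$ lies in the closure of $R'$.

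The mechanism is the consequence recorded immediately after Theorem~\ref{thm:no-max}: for each $p \in R_{a,b}$ there is a sequence of pairwise distinct points of $R_{a,b}$ converging to $p$. To produce it for a given $p = C_{a,b}(I_0)$ with $b \le s(I_0)$, I would first choose a descent set $I$ with $\psi(I) = I_0$ and $\lambda_1(I) > 2$, for instance $I = \{1, \dots, h-1\} \cup \{\, i+h \mid i \in I_0 \,\}$ for any $h \ge 2$, whose ribbon is $\mathbb{D}_{\operatorname{rib}}(\lambda^{I_0})$ with one extra column prepended, so that $s(I) = s(I_0) + 1 > b$. Theorem~\ref{thm:no-max} then applies in its $\lambda_1 > 2$ branch and gives $C_{a,b}(\phi^n(I)) \to C_{a,b}(\psi(I)) = p$, with the terms pairwise distinct. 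The statement I actually extract is the topological one: every neighborhood of $p$ contains infinitely many distinct elements of $R_{a,b}$.

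Granting this, the corollary is quick. Fix $p \in R_{a,b}$ and an arbitrary neighborhood $U$ of $p$; then $U \cap R_{a,b}$ is infinite, whereas $R_{a,b} \setminus R'$ is finite by hypothesis, so $U$ must contain a point of $R'$. As $U$ was arbitrary, $p \in \overline{R'}$, and since $p$ ranged over all of $R_{a,b}$ we obtain $R_{a,b} \subseteq \overline{R'}$. Taking closures and combining with the easy inclusion yields $\overline{R_{a,b}} = \overline{R'}$.

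I expect the only real obstacle to be the distinctness of the approximating sequence rather than the topology: Theorem~\ref{thm:no-max} a priori only yields convergence, and the argument collapses if $C_{a,b}(\phi^n(I))$ were eventually constant equal to $p$, since that would realize $p$ repeatedly without supplying distinct nearby values. I would settle this from the proof of Theorem~\ref{thm:no-max}, where, for $\lambda_1 > 2$, the coefficients $C_a(\phi^n(I))$ and $C_b(\phi^n(I))$ appear as polynomials in $n$ sharing their top-degree behavior but differing below it; the resulting ratios are non-constant and approach $p$ monotonically, furnishing the infinitely many distinct values of $R_{a,b}$ accumulating at $p$ that the topological argument needs.
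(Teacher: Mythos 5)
Your topological skeleton (reduce to $R_{a,b} \subseteq \overline{R'}$, then use neighborhoods plus finiteness of $R_{a,b}-R'$) is correct, and for points of $R_{a,b}$ strictly above the minimum your argument is essentially the paper's. The genuine gap is at the single point $p = \frac{a!}{b!}$ --- exactly the collapse you flagged and then dismissed. By Corollary \ref{cor:min-char}, $p$ is attained precisely by those $I_0$ with $w(I_0) \ge b$. Every $I$ with $\psi(I) = I_0$ has the form $I = \{1, \ldots, g-1\} \cup \{i+g \mid i \in I_0\}$, whose ribbon is $\mathbb{D}_{\operatorname{rib}}(\lambda^{I_0})$ with one column prepended; this prepending, and each subsequent application of $\phi$ (which only lengthens the leftmost column), leaves untouched the at least $b$ columns of height exactly two inherited from $\lambda^{I_0}$. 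Hence $w(\phi^n(I)) \ge b$ for all $n$, and Corollary \ref{cor:min-char} forces $C_{a,b}(\phi^n(I)) = \frac{a!}{b!}$ identically: the sequence is constant. Your fallback claim --- that $C_a(\phi^n(I))$ and $C_b(\phi^n(I))$ are polynomials in $n$ agreeing at top degree but differing below, so that the ratio is non-constant --- is false in this regime, since for $w \ge b$ the two coefficients are exactly proportional for every $n$. Concretely, for $a=0$, $b=1$, $I_0 = \{2\}$: $I = \{1,4\}$, $\phi(I) = \{1,2,5\}$, $\phi^2(I) = \{1,2,3,6\}$, and Appendix A gives $C_{0,1} = 1$ for all of them. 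So your construction produces no distinct points of $R_{a,b}$ accumulating at $\frac{a!}{b!}$, and your argument cannot conclude, for instance, that $\frac{a!}{b!} \in \overline{R'}$ when $R' = R_{a,b} - \left\{\frac{a!}{b!}\right\}$, which is an allowed choice of $R'$.

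The paper anticipates precisely this failure: it singles out $\frac{a!}{b!}$ as ``the only outlier'' and supplies a separate family accumulating at it from above, namely the doubletons $\{x, x+1\}$, for which $C_{a,b}(\{x, x+1\})$ is a decreasing sequence of pairwise distinct values converging to $\frac{a!}{b!}$ as $x \to \infty$. Adding this one-point argument (or some equivalent) to your proof closes the gap; the rest of your reasoning, including the neighborhood-plus-finiteness step, then goes through.
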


\begin{proof}
By the construction in Theorem \ref{thm:no-max} in the $\lambda_1>2$ case, every point $p$ in $R_{a, b}$ except $\frac{a!}{b!}$ is the limit of an increasing convergent sequence of pairwise distinct points $(p_1, p_2, \ldots)$ in $R_{a, b}-\{p\}$. 

The only outlier is the point $\frac{a!}{b!}$, as it is the minimum of $R_{a, b}$. However, there exists a decreasing convergent sequence of pairwise distinct points $(p_1, p_2, \ldots)$ that approaches $\frac{a!}{b!}$, namely $C_{a, b}(\{x, x+1\})$ as $x$ approaches infinity for arbitrarily large $x$.

Thus, only finitely many points are removed from each convergent sequence, hence every limit point of $R_{a, b}$ is also a limit point of $R'$. Since the closure is the set of all limit points, the two closures must coincide.
\end{proof}

For all integers $a \ge 0$, define the set $$T_a = \left\{C_{s-a-1, s-a}(I) \mid I \textrm{ descent set and } s=s(I) \ge a+1\right\}.$$Although $T_a$ is distinct from $R_a$, it remains similar in aspect of being a set of ratios of Naruse-Newton coefficients, only differing by the sequence in which the ratios are examined. Furthermore, our Theorem \ref{thm:no-max} allows us to generate a construction linking the closures of $T_a$ and $T_{a+1}$, thereby enabling us to find the closure of all $T_a$.

\begin{theorem} \label{thm:behind-closure}
For $a \ge 0$, it holds that $\overline{T_a} = \mathbb{R}_{\ge 0}$ and $T_0 = \mathbb{Q}_{>0}$.
\end{theorem}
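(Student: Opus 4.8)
The plan is to prove the exact statement $T_0=\mathbb{Q}_{>0}$ first, deduce $\overline{T_0}=\mathbb{R}_{\ge 0}$, and then obtain $\overline{T_a}=\mathbb{R}_{\ge 0}$ for all $a\ge 1$ by an induction that uses the construction of Theorem~\ref{thm:no-max} to push each point of $T_a$ into the closure of $T_{a+1}$. The inclusions $T_0\subseteq\mathbb{Q}_{>0}$ and $\overline{T_a}\subseteq\mathbb{R}_{\ge 0}$ are immediate, since every $C_j(I)$ is a nonnegative integer with $C_{s}(I),C_{s-1}(I)>0$, so each ratio is a positive rational (and in particular a nonnegative real). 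The substance of the base case is the reverse inclusion $\mathbb{Q}_{>0}\subseteq T_0$: for each pair of positive integers $p,q$ I must exhibit a descent set $I$ with $C_{s-1}(I)/C_s(I)=p/q$.

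To realize every positive rational I would use explicit one- and two-parameter families of ribbons whose only excited diagrams contributing to $C_{s-1}$ and $C_s$ can be enumerated directly, reducing each coefficient to a short sum of hook-length products. For example, the thickened hooks $\lambda=(w,w,2,1^{q})$ give $C_{s-1}/C_s=(w+q+2)/w$, which already realizes every rational strictly above $1$ (choose $w=3q'$ and $q=3(p'-q')-2$ for a target $p'/q'>1$ in lowest terms), while the shallow fat hooks $\lambda=(w,w,1^{q})$ give exactly the unit fractions $1/(w-1)$. The delicate range is $(0,1)$, where non-unit fractions require genuinely \emph{deep} ribbons: for instance $\lambda=(4,4,3)$ yields $C_{s-1}/C_s=7/8$, the point being that pushing the whole top row down into tall lower columns can make $C_s$ exceed $C_{s-1}$. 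The main obstacle of the whole theorem is organizing such deep families so that their closed-form top ratios sweep all of $(0,1)\cap\mathbb{Q}$; the excited-diagram bookkeeping there is intricate because moving the first row forces cascades among the lower $\mu$-cells. Once $T_0=\mathbb{Q}_{>0}$ is in hand, $\overline{T_0}=\overline{\mathbb{Q}_{>0}}=\mathbb{R}_{\ge 0}$.

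For the inductive step I will show $T_a\subseteq\overline{T_{a+1}}$, whence $\overline{T_a}\subseteq\overline{T_{a+1}}\subseteq\mathbb{R}_{\ge 0}$; together with $\overline{T_0}=\mathbb{R}_{\ge 0}$ this forces $\overline{T_a}=\mathbb{R}_{\ge 0}$ for every $a$. Fix $r\in T_a$, say $r=C_{s_K-a-1,\,s_K-a}(K)$ with $s_K=s(K)\ge a+1$. I realize $K$ as $\psi(I)$ by prepending a height-two leftmost column, i.e.\ taking $I=\{1\}\cup\{k+2:k\in K\}$; a short check gives $\psi(I)=K$ and $s(I)=s_K+1$, and since $\lambda_1(I)=s_K+2\ge 3$ we are in the $\lambda_1>2$ regime of Theorem~\ref{thm:no-max}. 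Applying it with bottom indices $a_0=s_K-a-1$ and $b_0=s_K-a$ (whose hypotheses $b_0>a_0\ge 0$ and $s(I)>b_0$ hold because $s_K\ge a+1$) gives
\[
\lim_{n\to\infty} C_{a_0,b_0}\big(\phi^{n}(I)\big)=C_{a_0,b_0}(\psi(I))=C_{s_K-a-1,\,s_K-a}(K)=r .
\]
Because $\phi$ preserves the number of columns, $s(\phi^{n}(I))=s_K+1$, so each term $C_{a_0,b_0}(\phi^{n}(I))=C_{(s_K+1)-(a+1)-1,\,(s_K+1)-(a+1)}(\phi^{n}(I))$ is an element of $T_{a+1}$. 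Thus $r$ is a limit of points of $T_{a+1}$, establishing $T_a\subseteq\overline{T_{a+1}}$.

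I expect the base case $T_0=\mathbb{Q}_{>0}$ to be the main obstacle: the inductive step is essentially a bookkeeping application of Theorem~\ref{thm:no-max} plus the prepend-a-column construction, whereas realizing \emph{every} positive rational — especially filling $(0,1)$, where Proposition~\ref{prop:jm19-2.4} only guarantees ratios $\ge 1/s$ and the shallow hooks contribute nothing but isolated unit fractions — demands a carefully chosen family of deep ribbons together with an exact evaluation of its top two Naruse–Newton coefficients.
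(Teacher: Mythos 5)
Your reduction and your inductive step are sound, and they coincide with the paper's own argument: the paper's sequence $I_k=[k]\cup\{i+k+1 \mid i\in I'\}$ is exactly your $\phi^{k-1}\bigl(\{1\}\cup\{j+2 \mid j\in K\}\bigr)$, and the paper likewise invokes Theorem~\ref{thm:no-max} in the $\lambda_1>2$ regime to get $T_a\subseteq\overline{T_{a+1}}$, hence $\mathbb{R}_{\ge 0}=\overline{T_a}\subseteq\overline{T_{a+1}}\subseteq\mathbb{R}_{\ge 0}$. Your bookkeeping here ($\psi(I)=K$, $s(I)=s_K+1$, the hypotheses $b_0>a_0\ge 0$ and $s(I)>b_0$) all checks out.

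The genuine gap is the base case, and it is fatal as the proposal stands: you never produce descent sets realizing the non-unit rationals in $(0,1)$, and without them neither $T_0=\mathbb{Q}_{>0}$ nor $\overline{T_0}=\mathbb{R}_{\ge 0}$ is established. Your two families are correct as far as they go (I verified $(w,w,2,1^q)$ gives $(w+q+2)/w$, e.g.\ $5/2$ for $\{1,3,4\}$ and $2$ for $(3,3,2,1)$, and $(w,w,1^q)$ is shallow with top ratio $1/(w-1)$), but their closure only yields $\{0\}\cup\{1/k : k\ge 1\}\cup[1,\infty)$, not $\mathbb{R}_{\ge 0}$; and since your induction only transfers $\overline{T_0}$ upward, the whole theorem collapses without the missing range. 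Deferring this as ``the main obstacle'' is an accurate self-assessment, but it is precisely the step that must be carried out: by Proposition~\ref{prop:jm19-2.4} one has $C_{s-1,s}\ge 1/s$, so hitting a small non-unit rational such as $2/5$ forces a deep ribbon with large $s$ and an exact evaluation of two growing excited-diagram sums. The paper fills exactly this hole with a single uniform two-parameter family, avoiding any case split between $(0,1)$ and $[1,\infty)$: for $\lambda^I=(s+1,s+1,s+1,1,\ldots,1)$ with $(a-2)s+1$ parts (three long rows atop a tall column), one computes $C_{s-1,s}(I)=\frac{a}{s+1}$, and taking $a=3m$, $s=3n-1$ realizes every $\frac{m}{n}\in\mathbb{Q}_{>0}$ at once. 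To complete your proof you would need to replace the deferred step with such an explicit family and actually carry out the evaluation of its top two Naruse--Newton coefficients.
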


\begin{proof}
We first prove that $\overline{T_0} = \mathbb{R}_{\ge 0}$. For all $a>2$ and $s>1$, construct set $I$ such that $\lambda^I = [s+1, s+1, s+1, 1, 1, \ldots, 1]$, where there are $(a-2)s+1$ parts. Then, it can be computed that $C_{s-1, s} (I) = \frac{a}{s+1}$. 

Thus, for any positive rational number $r = \frac{m}{n}$, the construction $a=3m$ and $s = 3n-1$ yields $C_{s-1, s}(I) = r$. Since all Naruse-Newton coefficients must be positive integers, we have that $T_0 = \mathbb{Q}_{>0}$. Of course, each $r\in \mathbb{R}_{>0}\backslash \mathbb{Q}_{>0}$ is a limit point of $R_0$, as the limit of an infinitely increasing convergent sequence of points $(\frac{\lfloor 2^0 r\rfloor}{2^0}, \frac{\lfloor 2^1 r\rfloor}{2^1}, \frac{\lfloor 2^2 r\rfloor}{2^2}, \ldots)$. Additionally, $0$ is the limit of the decreasing convergent sequence of points $(1, \frac{1}{2}, \frac{1}{3}, \ldots)$. Hence, we have that $\overline{T_0} = \mathbb{R}_{\ge 0}$.

We conclude by induction. Assume that for fixed $a \ge 0$, we know $\overline{T_a} = \mathbb{R}_{\ge 0}$. We will show that $\overline{T_{a+1}} = \mathbb{R}_{\ge 0}$. The base case, namely $a=0$, has already been proven. 

For any point $t \in T_a$, there exists a set $I'$ such that $t = C_{s-a-1, s-a}(I')$. Then, define the sequence of sets $(I_k)_{k=0}^\infty$ such that $I_k = [k] \cup \{i+k+1 \mid i \in I'\}$. By Theorem \ref{thm:no-max}, we have that $t$ is the limit of the increasing convergent sequence of points $(C_{s-a-2, s-a-1}(I_0), C_{s-a-2, s-a-1}(I_1), C_{s-a-2, s-a-1}(I_2), \ldots)$, each point in the sequence an element of $T_{a+1}$. Hence, $T_a \subseteq \overline{T_{a+1}}$. We thus have that $\overline{T_a} \subseteq \overline{\overline{T_{a+1}}}$. However, it is clear that $\overline{T_{a+1}} = \overline{\overline{T_{a+1}}}$, as $\overline{T_{a+1}}$ is already closed by the definition of a closure.

Finally, we have that $\mathbb{R}_{\ge 0} = \overline{T_a} \subseteq \overline{T_{a+1}}$. As all Naruse-Newton coefficients are positive integers, we have that $\min(T_{a+1})>0$, thus $\min(\overline{T_{a+1}}) \ge 0$. We conclude that $\overline{T_{a+1}} = \mathbb{R}_{\ge 0}$, as desired.
\end{proof}

\section{Doubletons}
This following section concerns coefficients $C_i$ of descent set $I$ when $|I| = 2$. In particular, we compare each $C_i$ with $C_{i+1}$, and find all $I$ such that the sequence of Naruse-Newton coefficients of $I$ is unimodal or log-concave. These results can motivate and provide context for properties of Naruse-Newton coefficients of $I$ with $|I|>2$.

We first begin by comparing each $C_i$ to the following $C_{i+1}$.

\begin{theorem} \label{thm:two-ineq}
Let $I = \{a, b\}$ for positive integers $a<b$. Let $(C_0, C_1, \ldots, C_{b-2})$ be the Naruse-Newton coefficients of $I$. Then, $$C_0 \le C_1 < C_2 < \cdots < C_{b-a-1}>C_{b-a}< C_{b-a+1}< \cdots \le C_{b-2}.$$ The equality case with $C_0=C_1$ occurs if and only if $b>a+1$. The equality case with $C_{b-3}=C_{b-2}$ occurs if and only if $I = \{3, 4\}$.
\end{theorem}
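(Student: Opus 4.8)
The plan is to reduce the whole chain of (in)equalities to the behaviour of the single ratio of consecutive coefficients, after first obtaining a closed form for them. For $I=\{a,b\}$ the associated ribbon has three rows, and one checks directly that $\lambda^I=(b-1,b-1,a)$ and $\mu^I=(b-2,a-1)$, so $s=b-2$. I record the hook lengths of the cells below the top row, $h_{\lambda^I}(c_{2,j})=b-j+1$ for $j\le a$ and $b-j$ for $a<j\le b-1$, and $h_{\lambda^I}(c_{3,j})=a-j+1$; I also note that $w(I)=b-a-1$ is the number of columns of height exactly two.

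Next I describe the excited diagrams through the standard monotone-filling model: an excited diagram is a family $(k_{i,j})_{(i,j)\in\mu^I}$, weakly increasing along rows and down columns, sending $c_{i,j}$ to $c_{i+k_{i,j},\,j+k_{i,j}}$. The key observation is that the rightmost first-row cell $c_{1,b-2}$ is capped in the second row (since $c_{3,b}\notin\mathbb{D}(\lambda^I)$ as $a<b$), so row-monotonicity forces every first-row cell of $\mu^I$ to stay in rows one or two, never reaching row three. Letting $r$ denote the number of cells remaining in the top row, the column constraints then force $c_{2,j}$ into the third row for $r<j\le a-1$, while the other second-row cells of $\mu^I$ are governed by a single threshold. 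Summing hook products of the cells below row one gives the factorization
\[
C_{s-r}=P_r\,S_r,\qquad P_r=\prod_{j=r+2}^{\,b-1}h_{\lambda^I}(c_{2,j}),\qquad S_r=\sum_{\theta=0}^{\min(r,a-1)}\frac{b!}{(b-\theta)!}\,(a-1-\theta)!\,.
\]

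Since $P_{r-1}/P_r=h_{\lambda^I}(c_{2,r+1})$, the consecutive ratio is $\frac{C_{s-r+1}}{C_{s-r}}=h_{\lambda^I}(c_{2,r+1})\,\frac{S_{r-1}}{S_r}$. For $r\ge a$ one has $S_{r-1}=S_r$ and the ratio equals $s-r+1$, recovering the equal-ratio regime of Theorem \ref{thm:num-min} for indices at most $w$; this already gives $C_0\le C_1<\cdots<C_{b-a-1}$ together with $C_0=C_1$ exactly when $b>a+1$. All remaining content lives in $r\le a-1$, i.e.\ indices at least $w$. The peak-to-dip drop is the inequality $(b-a+1)\,S_{a-2}/S_{a-1}<1$, equivalently $(b-a)\,S_{a-2}<\frac{b!}{(b-a+1)!}$, which I will prove using the telescoping identity $\sum_{i\ge1}\frac{(i+1)!\,(B+2)!}{(B+2+i)!}=\frac{2}{B}$ with $B=b-a$: the infinite sum is an exact equality, so every finite truncation is strict, forcing a strict drop for every doubleton. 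For the increasing tail I bound $S_{r-1}$ below by its top term, reducing $\frac{C_{s-r+1}}{C_{s-r}}>1$ (for $2\le r\le a-2$) to the elementary $(b-r-1)(a-r)\ge b-r+1$; the single remaining step is $\frac{C_{b-2}}{C_{b-3}}=\frac{(a-1)(b-1)}{a+b-1}$, so $C_{b-3}\le C_{b-2}$ becomes $(a-2)(b-2)\ge2$, an equality precisely for $(a,b)=(3,4)$.

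The main obstacle is the closed form above: carefully converting the excited-diagram sum, with its cross-column constraints linking descended top-row cells to the second-row cells, into the clean product-times-sum shape, and then establishing the telescoping bound that makes the drop strict even in the extreme case $a=b-1$. Once the factorization and that bound are secured, the rest of the chain and the two equality cases follow from routine manipulation of $P_r$ and $S_r$.
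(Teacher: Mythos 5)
Your proposal is correct, and it takes a genuinely different route from the paper's. Your closed form $C_{s-r}=P_rS_r$ checks out (it reproduces, e.g., $(5,5,3)$ for $I=\{2,4\}$ and $(27,27,21,24)$ for $I=\{3,5\}$), and the four regimes of the ratio $C_{s-r+1}/C_{s-r}=h_{\lambda^I}(c_{2,r+1})\,S_{r-1}/S_r$ --- the equal-ratio range $r\ge a$, the dip $r=a-1$, the strict tail $2\le r\le a-2$, and the last step $r=1$ --- cover the entire chain together with both equality characterizations. The paper assembles the theorem differently, from its general machinery: the initial equalities come from Proposition \ref{prop:strong-3.6}; the increasing tail is proved by induction on $a$ with $k=b-a$ fixed, transported by the strict ratio monotonicity of Proposition \ref{prop:add-row}, so that only the final ratio $C_{b-2, b-1}(J)=\frac{2b-k+1}{(b-k)b}$ is computed explicitly; and the dip is handled by writing $C_{k-1}$ and $C_k$ as explicit sums, which after the reindexing $i=a-1-\theta$ are exactly your $P_{a-1}S_{a-1}$ and $P_{a-2}S_{a-2}$. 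Your route is self-contained (it never invokes Proposition \ref{prop:add-row}), it produces formulas for every coefficient rather than only those near the peak, and --- most substantively --- it actually proves the dip inequality $C_{b-a-1}>C_{b-a}$, which the paper disposes of with ``it is easily computed'': the paper's asserted comparison is equivalent to your $(b-a)S_{a-2}<b!/(b-a+1)!$, and your telescoping bound is a genuine proof of it, valid down to the extreme case $a=b-1$. What the paper's approach buys is brevity and reuse of already-established structural results; what yours buys is explicitness and a complete argument at the one point where the paper waves its hands. Two bookkeeping remarks for the write-up: first, your identity should be applied in the form $\sum_{i\ge 0}\frac{(i+1)!\,(B+2)!}{(B+2+i)!}=1+\frac{2}{B}=\frac{B+2}{B}$, since under the substitution $i=a-2-\theta$ the quantity to bound is $\frac{B}{B+2}\sum_{i=0}^{a-2}\frac{(i+1)!\,(B+2)!}{(B+2+i)!}$, whose index starts at $i=0$; second, in the tail, the elementary bound $(b-r-1)(a-r)\ge b-r+1$ can hold with equality (this happens when $b=a+1$), so the strictness of $C_{s-r+1}>C_{s-r}$ there must be drawn from the fact that $S_{r-1}$ has at least two positive terms when $r\ge 2$ --- a point you should state explicitly.
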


\begin{proof}
By Proposition \ref{prop:strong-3.6}, we have 
$$\frac{C_0}{0!} = \frac{C_1}{1!} = \cdots = \frac{C_{b-a-1}}{(b-a-1)!}.$$

Hence, the equality case with $C_0 = C_1$ occurs when $b-a-1>0$, as stated. Let $k = b-a$. We will now show that $$C_{k}<C_{k+1}<\cdots \le C_{b-2}.$$ We proceed by induction on $a$, with constant $k$. The base case occurs when $I = \{2, k+2\}$. Then, we have that $b-a=b-2=k$, so the base case satisfies the inequality. Assume the desired inequality holds true for $I = \{b-k, b\}$. We will prove it holds true for $J = \{b-k+1, b+1\}$. By Proposition \ref{prop:add-row}, we know $C_{i, j}(J)<C_{i, j}(I)$ for $0 \le i<j$. Thus, if $C_i(I) \le C_j(I)$, then $C_i(J)<C_j(J)$. Hence, by our assumption, we have $$C_k(J)<C_{k+1}(J)<\cdots<C_{b-2}(J).$$

We will now show that $C_{b-2}(J) \le C_{b-1}(J)$. We compute $$C_{b-2}(J) = \frac{(b-k-1)!(b-1)!(2b-k+1)}{k}.$$

Furthermore, we compute that $$C_{b-1}(J) = \frac{(b-k)!b!}{k}.$$

By dividing, we now have $$C_{b-2, b-1}(J) = \frac{2b-k+1}{(b-k)b}.$$

Note that $\frac{2b-k+1}{(b-k)b}\le 1$ when $a=b-k+1>1$, hence we have as desired, and $C_{b-2}(J)\le C_{b-1}(J)$. The only equality case occurs when $k=1$ and $b=4$. Since the base case has $a=2$, we know that all cases can be produced from the base case.

We now prove that $C_{k-1}>C_k$. We compute that $$C_{k-1} = (k-1)! \cdot \left(\displaystyle\sum_{i=0}^{a-1} \left(\displaystyle\prod_{j=1}^i j \displaystyle\prod_{\ell=k+2+i}^b \ell\right)\right).$$

We also compute that $$C_{k} = (k-1)!(k+1)\left(\displaystyle\sum_{i=1}^{a-1}\left(\displaystyle\prod_{j=1}^i j \displaystyle\prod_{\ell=k+2+i}^b \ell\right)\right).$$

Finally, it is easily computed that $C_{k-1}>C_k$. We conclude that $C_0 \le C_1<C_2<\cdots <C_{k-1}>C_k< C_{k+1}<\cdots \le C_{b-2}$. This is as desired.
\end{proof}

A \emph{unimodal} sequence is a sequence $(a_k)^m_{k=0}$ such that there exists an $i$ in the range $1 \le i \le m$ with $a_0 \le a_1 \le \cdots \le a_i$ and $a_i \ge a_{i+1} \ge \cdots \ge a_m$. Using the inequalities of our Theorem \ref{thm:two-ineq}, we find all $I$ of size $2$ such that the Naruse-Newton coefficients of $I$ form a unimodal sequence. 

\begin{corollary} \label{cor:two-uni}
Let $I = \{a, b\}$ for positive integers $a<b$. The sequence of Naruse-Newton coefficients is unimodal if and only if $a=1$, $a=2$, or $I = \{3, 4\}$.
\end{corollary}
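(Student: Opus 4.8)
The plan is to read everything off the inequality chain established in Theorem~\ref{thm:two-ineq}. Writing $k = b-a$, that chain says the sequence $(C_0, \dots, C_{b-2})$ weakly increases up to the peak $C_{k-1}$, strictly drops to $C_k$, and then increases (strictly, then weakly) up to $C_{b-2}$. Unimodality hinges on a single question: whether this ``second rise'' actually occurs in the interior and produces a strict valley $C_{k-1} > C_k < C_{k+1}$. The key observation I will use is that any sequence containing such a strict interior valley fails to be unimodal: were a unimodal sequence to have its peak at index $p$, the strict drop $C_{k-1} > C_k$ would force $p \le k-1$, so that the suffix from $p$ on is non-increasing, contradicting $C_k < C_{k+1}$.

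For the three cases claimed to be unimodal, I would show no such valley can form. When $a = 1$ the dip index $k = b-1$ lies beyond the final index $b-2$, so the chain terminates at its peak and the whole sequence is weakly increasing, hence unimodal. When $a = 2$ the dip index $k = b-2$ is the final index, so the ``second rise'' is empty and the sequence increases to $C_{b-3}$ and then drops exactly once, giving a single peak. Finally, for $I = \{3,4\}$ the sequence is $(C_0, C_1, C_2)$ with $k = 1$; here the last step of the chain is precisely the equality case $C_{b-3} = C_{b-2}$, that is $C_1 = C_2$, so instead of a strict valley we obtain $C_0 > C_1 = C_2$, which is (weakly decreasing, hence) unimodal.

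For the remaining direction I would take $a \ge 3$ with $I \ne \{3,4\}$ and exhibit the forbidden valley. Here $k+1 = b-a+1 \le b-2$ exactly because $a \ge 3$, so $C_{k+1}$ lies within the sequence, and the drop $C_{k-1} > C_k$ is a strict step of the chain since $k-1 \ge 0$ and $k \le b-3$. It remains to verify that $C_k < C_{k+1}$ is strict. By Theorem~\ref{thm:two-ineq} every step of the second rise is strict except possibly the terminal step $C_{b-3} \le C_{b-2}$, which is an equality only for $I = \{3,4\}$; and the step $C_k < C_{k+1}$ coincides with this terminal step precisely when $a = 3$. Thus for $a \ge 4$ the step is strict automatically, and for $a = 3$ it is strict because $I \ne \{3,4\}$. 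In every case we obtain the strict interior valley $C_{k-1} > C_k < C_{k+1}$, so the sequence is not unimodal.

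The argument is entirely elementary once Theorem~\ref{thm:two-ineq} is in hand; the only real care needed, and the step I expect to be the main obstacle, is the boundary bookkeeping of where the peak index $k-1$ and dip index $k$ fall relative to the endpoints $0$ and $b-2$, together with correctly invoking the two equality conditions of Theorem~\ref{thm:two-ineq} so that the degenerate case $\{3,4\}$ is cleanly separated from the generic non-unimodal analysis rather than being swept into it.
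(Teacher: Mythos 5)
Your proposal is correct and follows essentially the same route as the paper: both read unimodality directly off the chain in Theorem~\ref{thm:two-ineq}, using the observation that a strict interior valley $C_{b-a-1} > C_{b-a} < C_{b-a+1}$ is incompatible with unimodality, and then resolve the cases $a=1$, $a=2$, $a=3$ (with the equality case $C_{b-3}=C_{b-2}$ isolating $I=\{3,4\}$) versus $a \ge 3$, $I \ne \{3,4\}$. Your write-up is somewhat more explicit about the boundary bookkeeping and about why the terminal step's equality case matters only when $a=3$, but the underlying argument is identical.
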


\begin{proof}
If the sequence is unimodal, there cannot exist an element $C_i$ such that $C_{i-1}>C_i<C_{i+1}$. However, by Theorem \ref{thm:two-ineq}, we know $C_{b-a-1}>C_{b-a}<C_{b-a+1}< \cdots \le C_{b-2}$. Thus, if the sequence of Naruse-Newton coefficients of $\{a, b\}$ is unimodal, then $b-a+1 \ge b-2$, and $a \le 3$. If $a=1$, then $C_0 \le C_1 < \cdots <C_{b-2}$, so the sequence must be unimodal for all $b$. If $a=2$, then $C_0 =C_1 < \cdots <C_{b-3}>C_{b-2}$, so the sequence must be unimodal for all $b$. If $a=3$, then $C_0=C_1 < \cdots < C_{b-4}>C_{b-3} \le C_{b-2}$. This sequence can only be unimodal when $C_{b-3} = C_{b-2}$, so the only case the sequence is unimodal for $a=3$ is the $I = \{3, 4\}$ case. We have as desired.
\end{proof}

This corollary may give insight into finding unimodal sequences of Naruse-Newton coefficients for $|I| \ge 3$. We also obtain a nice result by finding the probability that a randomly chosen sequence of Naruse-Newton coefficients for a doubleton $I$ is unimodal.

\begin{corollary} \label{cor:two-prob}
Let $n \ge 4$ be a positive integer. Let $I$ be chosen uniformly at random from ${[n] \choose 2}$. Then, the probability that the sequence of Naruse-Newton coefficients of $I$ is unimodal is exactly $\frac{4}{n}$.
\end{corollary}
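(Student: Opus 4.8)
The plan is to reduce the probability computation to a direct enumeration using the complete characterization of unimodality already established in Corollary \ref{cor:two-uni}. Since $I$ is drawn uniformly from $\binom{[n]}{2}$, the sample space has size $\binom{n}{2} = \frac{n(n-1)}{2}$, and it suffices to count the favorable outcomes, namely the doubletons $I = \{a, b\}$ with $1 \le a < b \le n$ whose Naruse-Newton sequence is unimodal. By Corollary \ref{cor:two-uni}, these are exactly the sets with $a = 1$, those with $a = 2$, and the single set $I = \{3, 4\}$.

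Next I would count each of these families, observing that they are pairwise disjoint because they are distinguished by the value of $a$ (the sets with $a=1$, the sets with $a=2$, and the lone set with $a=3$). For $a = 1$, the second element $b$ ranges over $\{2, 3, \ldots, n\}$, contributing $n-1$ sets; for $a = 2$, we have $b \in \{3, 4, \ldots, n\}$, contributing $n-2$ sets; and the set $\{3, 4\}$ contributes exactly $1$, which is a valid element of $\binom{[n]}{2}$ precisely because the hypothesis $n \ge 4$ guarantees $4 \le n$. Summing gives a total of $(n-1) + (n-2) + 1 = 2n - 2$ favorable outcomes.

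Finally I would divide to obtain the probability:
$$\frac{2n-2}{\binom{n}{2}} = \frac{2(n-1)}{\tfrac{n(n-1)}{2}} = \frac{4(n-1)}{n(n-1)} = \frac{4}{n},$$
as claimed. There is no genuine analytic obstacle here; the entire content is packaged into Corollary \ref{cor:two-uni}, and the remaining work is a short combinatorial count. The only point demanding care is the role of the hypothesis $n \ge 4$, which is exactly what is needed for $\{3, 4\}$ to lie in the sample space and for the three counted families to combine into the clean total $2n-2$; for $n = 3$ the exceptional set $\{3,4\}$ would be vacuous and the formula would require separate comment, so stating $n \ge 4$ lets the single uniform expression stand.
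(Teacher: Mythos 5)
Your proposal is correct and is precisely the argument the paper leaves implicit: the paper states Corollary \ref{cor:two-prob} without proof, treating it as an immediate consequence of the characterization in Corollary \ref{cor:two-uni}, and your enumeration ($n-1$ sets with $a=1$, $n-2$ sets with $a=2$, plus $\{3,4\}$, giving $\frac{2n-2}{\binom{n}{2}} = \frac{4}{n}$) is exactly that count, including the correct observation that $n \ge 4$ is what places $\{3,4\}$ in the sample space.
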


In particular, this probability approaches $0$ as $n$ approaches $\infty$. 

A \emph{log-concave} sequence $(a_k)^m_{k=0}$ is defined such that for all $0 < k < m$,  we have $a_k^2 \ge a_{k-1}a_{k+1}$. Because all log-concave sequences are unimodal, we can use Corollary \ref{cor:two-uni} to determine all doubletons $I$ (of which there are four) such that the sequence of Naruse-Newton coefficients of $I$ is log-concave.

\begin{corollary} \label{cor:two-log}
Let $I = \{a, b\}$ for positive integers $a<b$. The sequence of Naruse-Newton coefficients of $I$ is log-concave if and only if $I = \{1, 2\}$, $I = \{1, 3\}$, $I = \{2, 3\}$, or $I = \{2, 4\}$.
\end{corollary}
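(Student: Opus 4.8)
The plan is to leverage Corollary \ref{cor:two-uni} together with the structure of the inequalities established in Theorem \ref{thm:two-ineq}, since log-concavity is a strictly stronger condition than unimodality. First I would observe that any log-concave sequence is automatically unimodal, so the only candidates for a log-concave sequence of Naruse-Newton coefficients of a doubleton are precisely those already identified in Corollary \ref{cor:two-uni}: namely $a=1$ (all $b$), $a=2$ (all $b$), and $I=\{3,4\}$. This drastically reduces the problem to checking the log-concavity condition $C_k^2 \ge C_{k-1}C_{k+1}$ on these three families, and I expect the bulk of the work to be case analysis rather than any deep structural argument.

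Next I would dispose of the finite and easily computed cases. The set $I=\{3,4\}$ has only three coefficients, so log-concavity reduces to a single inequality that can be verified by direct computation of $C_0, C_1, C_2$. For the family $a=2$, I would use the formulas for the coefficients implicit in the proof of Theorem \ref{thm:two-ineq} (where the closed forms for $C_{k-1}$, $C_k$, and the boundary coefficients $C_{b-2}$, $C_{b-1}$ are derived) to test the log-concavity inequality along the sequence. I anticipate that for $a=2$ the sequence is log-concave only for small $b$, yielding $\{2,3\}$ and $\{2,4\}$, and that log-concavity fails once $b$ grows large enough — most plausibly at the ``plateau'' $C_0=C_1$ or near the peak, where the ratio $C_{k-1}/C_k$ and the subsequent increasing run create a violation of the multiplicative convexity required by log-concavity.

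The main obstacle, and the case demanding the most care, is the family $a=1$, which is an infinite family where the coefficients strictly increase as $C_0 \le C_1 < C_2 < \cdots < C_{b-2}$ with peak at the last index. Here I would derive explicit product-type formulas for the $C_i$ (analogous to the summation formulas appearing in the proof of Theorem \ref{thm:two-ineq}) and check the inequality $C_k^2 \ge C_{k-1}C_{k+1}$ termwise. I expect that log-concavity holds only for $\{1,2\}$ and $\{1,3\}$ and fails for all larger $b$; the delicate point will be identifying the first index $k$ at which the inequality breaks and showing it genuinely fails (rather than holding with equality) for every $b \ge 4$. Once all three families are resolved, the four surviving sets $\{1,2\}, \{1,3\}, \{2,3\}, \{2,4\}$ are exactly the claimed list, completing the proof; the only real risk is an arithmetic slip in the closed-form coefficient formulas, so I would cross-check each candidate against the small-case computations in Appendix A.
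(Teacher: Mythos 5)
Your proposal follows the paper's proof in essentially the same way: reduce to the unimodal cases of Corollary \ref{cor:two-uni} (since log-concavity implies unimodality) and then check the three families $a=1$, $a=2$, and $\{3,4\}$ individually. The only difference is that the paper's checks are one-liners rather than the termwise analysis you anticipate: for $a=1$, $b\ge 4$ the sequence begins $C_0=C_1=1$, $C_2=2$, and for $a=2$, $b\ge 5$ it begins $C_0=C_1=b+1$, $C_2=2b+2$, so the plateau followed by $C_2=2C_1$ violates $C_1^2\ge C_0C_2$ at the very first triple, while $\{3,4\}$ fails because $C_1^2=144<216=C_0C_2$ --- no search for a ``first breaking index'' in the family $a=1$ is needed.
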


\begin{proof}
All log-concave sequences must be unimodal, so we narrow our search down to unimodal sequences of Naruse-Newton coefficients. Hence, we conduct casework on $I$ as per the cases listed in Corollary \ref{cor:two-uni}. If $a=1$ and $b \ge 4$, then $C_0 = C_1 = 1$ and $C_2 = 2$, so the sequence is not log-concave. Thus, the only sequences with $a=1$ that are log-concave correspond to $I = \{1, 2\}$ and $I = \{1, 3\}$. If $a=2$ and $b \ge 5$, then $C_0 = C_1 = b+1$ and $C_2 = 2b+2$, so the sequence is not log-concave. Thus, the only sequences with $a=2$ that are log-concave correspond to $I = \{2, 3\}$ and $I = \{2, 4\}$. Finally, if $I = \{3, 4\}$, then $C_0 = 18$ and $C_1 = C_2 = 12$, so the sequence is not log-concave.
\end{proof}

\section{Ribbons of Staircase Shape}

A \emph{ribbon of staircase shape} is defined as a ribbon that corresponds to a descent set $I = \{1, 3, \ldots, 2k+1\}$ for a positive integer $k$. In this section, we examine ribbons of staircase shape and properties of their Naruse-Newton coefficients.

\begin{proposition} \label{prop:staircase}
Let $I = \{1, 3, \ldots, 2k+1\}$ for positive integer $k$. Let $(C_0, C_1, \ldots, C_k)$ be the Naruse-Newton coefficients of $I$. Then, $$C_0=C_1>C_2> \cdots > C_k.$$
\end{proposition}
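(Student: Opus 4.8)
The plan is to separate the single equality $C_0 = C_1$ from the strictly decreasing tail $C_1 > C_2 > \cdots > C_k$, since the two need genuinely different input. For the setup, the staircase descent set $I = \{1, 3, \ldots, 2k+1\}$ has normalized partition $\lambda^I = (k+1, k+1, k, k-1, \ldots, 1)$, with conjugate $(\lambda^I)' = (k+2, k+1, k, \ldots, 2)$. Hence $s = (\lambda^I)_1 - 1 = k$ and $(\lambda^I)'_j = k+3-j$ for $2 \le j \le k+1$, so the value $2$ is attained by $(\lambda^I)'_j$ exactly once on $1 \le j \le s+1$, namely at $j = k+1$. Therefore $w(I) = 1$, and Theorem \ref{thm:num-min} yields at once both $\frac{C_0}{0!} = \frac{C_1}{1!}$, i.e. $C_0 = C_1$, and the normalized strict chain $\frac{C_1}{1!} > \frac{C_2}{2!} > \cdots > \frac{C_k}{k!}$.

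The real work is the tail, because the chain just produced is weaker than the claim: $\frac{C_i}{i!} > \frac{C_{i+1}}{(i+1)!}$ only gives $C_{i+1} < (i+1)C_i$, whereas we must show $C_{i+1} < C_i$. I would therefore compute the coefficients directly from the excited-diagram description of Section 3, where $C_{k-i}$ is the sum, over excited diagrams of $\mathbb{D}(\lambda^I/\mu^I)$ with exactly $i$ cells in the top row, of the products of the hook lengths of the remaining (non-top-row) cells. The staircase hooks are completely explicit — for instance $h_{\lambda^I}(c_{2,c}) = 2k+3-2c$ for $c \ge 2$ and $h_{\lambda^I}(c_{2,1}) = 2k+1$ — so these sums are concrete and comparable.

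To organize the computation I would induct on $k$, using that the reduced set $I- = I \setminus \{\max I\}$ is again a staircase (for $k-1$) and that Proposition \ref{prop:add-row} already records each relevant coefficient through the auxiliary sequence $D_i(I) = C_{i-1}(I-)$ and the hook products $H_i, H'_i$; specializing the ratio identity from that proof to the staircase hooks reduces each step $C_i > C_{i+1}$ (for $i \ge 1$) to checking that an explicit rational expression in the odd integers $2k+1, 2k-1, \ldots$ exceeds $1$. The main obstacle is precisely this quantitative check. Combinatorially, reducing the first-row count by one corresponds to pushing the cell $c_{1,i+1}$ into $c_{2,i+2}$, which multiplies a term's weight by $h(c_{2,i+2}) > 1$ and so naively points the inequality the wrong way; the resolution is that the diagrams unable to perform this push are exactly those keeping a heavy cell (such as $c_{2,2}$, of large hook length) fixed in place, and their surplus weight outweighs the gain from the push. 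Turning ``outweighs'' into a proof — bounding the reducible contributions against the irreducible ones using the explicit hook values, supported by the inductive hypothesis that the $(k-1)$-staircase coefficients already strictly decrease past index $1$ — is the crux of the argument; a useful sanity anchor is that the unique maximal excited diagram contributes the clean product $\prod_{j=1}^{k}(2j-1)!!$ to $C_k$.
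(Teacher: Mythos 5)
Your setup is correct and matches the paper's: $\lambda^I=(k+1,k+1,k,\ldots,1)$, so $w(I)=1$, and Theorem \ref{thm:num-min} (or Proposition \ref{prop:strong-3.6}) gives $C_0=C_1$. You also correctly diagnose the real difficulty: the normalized chain $\frac{C_1}{1!}>\frac{C_2}{2!}>\cdots>\frac{C_k}{k!}$ only yields $C_{i+1}<(i+1)C_i$, not $C_{i+1}<C_i$. But your proposal stops exactly where the proof has to happen. Everything after that diagnosis is a plan, not an argument: you name the right ingredients (induction on $k$, the reduced staircase $I-$, explicit staircase hooks, the ratio identity from Proposition \ref{prop:add-row}), observe that the naive excitation move points the inequality the wrong way, and then assert that the non-pushable diagrams' ``surplus weight outweighs the gain'' --- explicitly deferring that quantitative check as ``the crux of the argument.'' That check is not a routine verification; it is the entire content of the proposition beyond $C_0=C_1$, and it involves the inductively-known (not explicit) coefficients $C_\ell(J)$ of the smaller staircase, so it cannot be reduced to ``a rational expression in odd integers exceeds $1$'' without doing the actual comparison. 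As written, the proposal proves only what Theorem \ref{thm:num-min} already gives.

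For comparison, here is how the paper closes this gap. Writing $J=\{1,3,\ldots,2k-1\}$ and doing casework on the unexcited cells of the second row gives the staircase-specific recurrence
$$C_i(I)=\left(\prod_{j=1}^{i}(2j-1)\right)\sum_{\ell=i-1}^{k-1}\left(C_\ell(J)\prod_{m=\ell+2}^{k}(2m+1)\right),$$
valid for $i\ge 1$, which immediately yields the identity
$$(2i+1)\,C_i(I)=C_{i-1}(J)\prod_{j=1}^{k}(2j+1)+C_{i+1}(I).$$
Thus $C_i(I)>C_{i+1}(I)$ reduces to showing $C_{i-1}(J)\prod_{j=1}^{k}(2j+1)>2i\,C_{i+1}(I)$, and this is where the inductive hypothesis enters: since $C_{i-1}(J)\ge C_\ell(J)$ for all $\ell\ge i$, the recurrence for $C_{i+1}(I)$ gives
$$C_{i+1}(I)\le C_{i-1}(J)\left(\prod_{j=1}^{k}(2j+1)\right)\sum_{\ell=i}^{k-1}\frac{1}{(2i+3)(2i+5)\cdots(2\ell+3)}<C_{i-1}(J)\left(\prod_{j=1}^{k}(2j+1)\right)\frac{1}{2i+2},$$
the last step being a geometric-series bound with ratio at most $\frac{1}{2i+3}$; since $\frac{1}{2i+2}<\frac{1}{2i}$, the needed inequality follows. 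If you want to salvage your proposal, this comparison-of-coefficients estimate (or an equivalent one extracted from the ratio identity of Proposition \ref{prop:add-row}) is precisely what must be supplied.
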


\begin{proof}
We use induction on $k$. The base case occurs when $k=1$, and satisfies the desired condition as $C_0 = C_1 = 1$.

Assume the proposition is true for $J = \{1, 3, \ldots, 2k-1\}$. We shall prove it holds true for $I = \{1, 3, \ldots, 2k+1\}$. By Proposition \ref{prop:strong-3.6}, we have $C_0(I) = C_1(I)$. Then, we express each $C_a(I)$ as a sum of multiples of $C_b(J)$, by casework on the number of unexcited cells in the second row of $\mathbb{D}_{\operatorname{rib}}(\lambda^I)$. Thus, we have $$C_i(I) = \left(\displaystyle\prod_{j=1}^{i} (2j-1) \right) \displaystyle\sum_{\ell=i-1}^{k-1} \left(C_{\ell}(J) \displaystyle\prod_{m=\ell+2}^k (2m+1) \right).$$

We now compare $C_i(I)$ and $C_{i+1}(I)$ for $0<i \le k-1$. Note that $$C_i(I) = \frac{C_{i-1}(J) \displaystyle\prod_{j=1}^{k} (2j+1) + C_{i+1}(I)}{2i+1}.$$

However, since $C_{i-1}(J)\ge C_i(J)> \cdots > C_{k-1}(J)$ by our assumption, it is clear by summation of coefficients that $$C_{i-1}(J) \displaystyle\prod_{j=1}^{k}(2j+1) > 2i \cdot C_{i+1}(I).$$

Hence, we have $C_i(I)>C_{i+1}(I)$ for $0<i \le k-1$, as desired.
\end{proof}

To further understand the properties Naruse-Newton coefficients corresponding to ribbons of staircase shape, we examine ratios between them. In particular, we characterize the number triangle formed by $(C_{a}(\{1, \ldots, 2b+1\}))_{b \ge a \ge 0}$. In this triangle, the first row occurs when $b=0$, and contains only $C_0 = 0$. The second row occurs when $b=1$, and contains $C_0 = 1$ and $C_1 = 1$. The third row contains $6, 6, 3$, the fourth row contains $255, 255, 135, 45$, and the fifth row contains $97650, 97650, 51975, 18900, 4725$. For a positive integer $x$, the double factorial is defined as $x!! = \displaystyle\prod_{i=0}^{\lceil \frac{x}{2} \rceil-1} (x-2i)$. We now find a computational formula for each element of the number triangle, based upon elements in prior rows.

\begin{proposition}\label{prop:recurrence}
Let $C_k(I)$ represent the $(k+1)$-th Naruse-Newton coefficient of a non-empty set of positive integers $I$. Then, the number triangle formed by $C_{a}(\{1, \ldots, 2b+1\})_{b \ge a \ge 0}$ is computed using:
\begin{enumerate}
    \item $C_0(\{1\}) = 1$.
    \item For all $b \ge a \ge 1$, we have $$C_a(\{1, \ldots, 2b+1\}) = (2a-1)!! \displaystyle\sum_{i=a-1}^{b-1} \left(C_{i}(\{1, \ldots, 2b-1\}) \cdot \frac{(2b+1)!!}{(2i+3)!!}\right).$$
    \item For all $b \ge 1$, we have $C_{0}(\{1, \ldots, 2b+1\}) =C_{1}(\{1, \ldots, 2b+1\})$.
\end{enumerate}
\end{proposition}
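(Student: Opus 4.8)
The plan is to observe that this proposition is, for the most part, a repackaging in double-factorial notation of the recurrence already produced inside the proof of Proposition \ref{prop:staircase}. Items (1) and (3) can be dispatched immediately. For item (1), the set $I=\{1\}$ corresponds to the vertical two-cell ribbon $\mathbb{D}_{\operatorname{rib}}((1,1))$, for which $\mu^I=\varnothing$ and $s=0$; the unique excited diagram is empty, its hook product is the empty product $1$, so $E(t)=C_0(\{1\})=1$. Item (3) is precisely the equality $C_0(I)=C_1(I)$ recorded in Proposition \ref{prop:staircase}, which there follows from Proposition \ref{prop:strong-3.6}. So the genuine content lies in item (2).

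For item (2) I would set $b=k$ and recall the closed form established for the staircase in the proof of Proposition \ref{prop:staircase}, namely
\[
C_a(I)=\left(\prod_{j=1}^{a}(2j-1)\right)\sum_{\ell=a-1}^{k-1}\left(C_\ell(J)\prod_{m=\ell+2}^{k}(2m+1)\right),
\]
where $J=\{1,3,\ldots,2k-1\}$ and $I=\{1,3,\ldots,2k+1\}$, valid for $a\ge 1$. It then remains only to rewrite the two products as double factorials. Using the paper's definition of $x!!$, one verifies $\prod_{j=1}^{a}(2j-1)=(2a-1)!!$, and, relabelling $\ell$ as $i$, $\prod_{m=i+2}^{k}(2m+1)=(2k+1)(2k-1)\cdots(2i+5)=\frac{(2k+1)!!}{(2i+3)!!}$. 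Substituting $k=b$ then yields item (2) verbatim; this step is purely notational and carries no difficulty.

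The real obstacle---if one wants the statement to stand on its own rather than inherit the displayed formula---is re-deriving that recurrence from Naruse's formula (Theorem \ref{thm:naruse}). The approach I would follow mirrors the casework flagged in Proposition \ref{prop:staircase}: fix the number $k-a$ of first-row cells an excited diagram of $\mathbb{D}_{\operatorname{rib}}(\lambda^I)$ carries, and then sort such diagrams by how many second-row cells remain unexcited. Each resulting class should be placed in bijection with the excited diagrams of the one-step-smaller staircase $\mathbb{D}_{\operatorname{rib}}(\lambda^J)$ having a prescribed first-row count indexed by $i$, so that summing hook products over the larger ribbon regroups into the sum over $i$ of $C_i(J)$ times an explicit hook factor. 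The delicate point is the hook-length bookkeeping: one must check that the $a$ topmost excited cells contribute exactly the prefactor $(2a-1)!!$ and that the cells bridging the two staircases contribute the product of the $(2m+1)$ assembling into $(2b+1)!!/(2i+3)!!$. Verifying that this regrouping is a genuine bijection and that the hook products split precisely as claimed---which relies on the highly regular hook lengths of a staircase ribbon---is the one place real geometric care is required; everything else reduces to Proposition \ref{prop:staircase} and notation.
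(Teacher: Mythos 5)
Your proposal is correct and follows essentially the same route as the paper: the paper's own proof of item (2) performs exactly the excited-diagram casework you describe (splitting the diagrams with a fixed number of first-row cells according to the number of cells in the second row, as in Proposition \ref{prop:add-row}), which is also the casework behind the formula displayed in the proof of Proposition \ref{prop:staircase}, so citing that formula and converting the products to double factorials is the same argument in different packaging. Incidentally, your treatment of item (1) via $\lambda^{\{1\}}=(1,1)$ is cleaner than the paper's (which misidentifies the partition as $(2,2,1)$, the partition for $\{1,3\}$), and your item (3) via Proposition \ref{prop:strong-3.6} is equivalent to the paper's appeal to Corollary \ref{cor:min-char}.
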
 

\begin{proof}
We prove the three parts of this proposition sequentially, beginning with the first. Note that $I = \{1\}$ corresponds to the ribbon $\mathbb{D}_{\operatorname{rib}}(\lambda^I)$ where $\lambda^I = (2, 2, 1)$. In this case, we know $C_0 = 1$, as desired.

The second part is proven by separating the set of excited diagrams with $b-a$ cells in the first row by the number of cells in the second row, similar to our process in Proposition \ref{prop:add-row}. In $C_a$, the coefficient of $C_{i}$ for $a-1 \le i \le b-1$ must thus be $(2a-1)!! \cdot \frac{(2b+1)!!}{(2i+3)!!}$, the former term from the product of the hook lengths of the rightmost cells of the second row, the latter term from the product of the hook lengths of the leftmost cells of the second row. Hence, by taking the summation, we arrive at our desired formula.

Finally, the third part is proven directly from Corollary \ref{cor:min-char}, as $w(\{1, \ldots, 2b+1\}) = 1$. Evidently, these three equations define the number triangle, as the second and third equation generate each subsequent $(C_i(\{1, \ldots, 2b+1\}))_{i=0}^b$ given $(C_i(\{1, \ldots, 2b-1\}))_{i=0}^{b-1}$. Meanwhile, the first equation generates the ``base case", that is, $C_0(\{1\}) = 1$.
\end{proof}

Our number triangle now allows us to prove a key result on the polynomiality of a particular ratio of Naruse-Newton coefficients of ribbons of staircase shape, namely Theorem \ref{thm:tripoly}. While this theorem does not hold for most other ribbons, it holds for ribbons of staircase shape. Moreover, it is quite rare to be able to generate a polynomial equal to a ratio of sums of products, such a property representing the beauty of ribbons of staircase shape.

We first prove an important and quite beautiful lemma used in the proof of Theorem \ref{thm:tripoly} to show the existence of a set of polynomial coefficients.

\begin{lemma} Define the $(k+1)\times(k+1)$ matrix $\widetilde{A}_k(x)$ as:
$$\widetilde{A}_k(x) = \begin{bmatrix} 1 & x & x^2 & \cdots & x^{k-2} & x^{k-1} & x^k \\ 0 & 0 & 0 &\cdots & 0 & -2 & -2{k \choose 2} + {k \choose 1} \\ 0 & 0 & 0 & \cdots & -4 & -2{k-1 \choose 2} + {k-1 \choose 1} & 2{k \choose 3} - {k \choose 2} \\ 0 & 0 & 0 & \cdots & -2{k-2 \choose 2} + {k-2 \choose 1} & 2 {k-1 \choose 3}-{k-1 \choose 2} & -2{k \choose 4}+{k\choose 3} \\ \vdots & \vdots & \vdots & \ddots & \vdots &\vdots & \vdots \\ -2k & 1 & -1 & \cdots & (-1)^{k-1} & (-1)^k & (-1)^{k+1} \end{bmatrix}.$$

Then, $\det(\widetilde{A}_k(x)) = (-1)^{(k^2-k)/2} \cdot k! \cdot \displaystyle\prod_{i=1}^k (2i-3-2x)$.  \label{lem:matrix}
\end{lemma}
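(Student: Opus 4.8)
The plan is to regard $\det(\widetilde{A}_k(x))$ as a polynomial in $x$ and pin it down from three pieces of data: its degree, its $k$ roots, and its leading coefficient, since a polynomial of degree $k$ is determined by these. Because $x$ occurs only in the first row, cofactor expansion along that row gives $\det(\widetilde{A}_k(x)) = \sum_{c=1}^{k+1} x^{c-1}(-1)^{1+c} M_{1,c}$, where each minor $M_{1,c}$ is a constant independent of $x$. Hence the determinant is a polynomial in $x$ of degree at most $k$, and it remains to compute its leading coefficient and to locate its roots.

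For the leading coefficient I would examine the coefficient of $x^k$, namely $(-1)^{1+(k+1)} M_{1,k+1} = (-1)^k M_{1,k+1}$, where $M_{1,k+1}$ is the minor obtained by deleting the first row and the last column. This $k \times k$ minor has a staircase of forced zeros: the row inherited from the original row $j$ (for $2 \le j \le k$) is supported only in columns $\ge k+2-j$, so in any nonzero term of the Leibniz expansion the row from original row $2$ must take column $k$, that from original row $3$ column $k-1$, and so on, forcing the reversal permutation $i \mapsto k+1-i$. Therefore $M_{1,k+1}$ equals the product of the anti-diagonal entries $-2, -4, \ldots, -2k$ times the sign $(-1)^{k(k-1)/2}$ of the reversal, giving $M_{1,k+1} = (-1)^{(k^2-k)/2}(-2)^k\, k!$. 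In particular this is nonzero, so the determinant has degree exactly $k$ and the $k$ constant rows are linearly independent. Comparing this against the leading coefficient of the target product fixes the overall constant $(-1)^{(k^2-k)/2} k!$; care is needed with an overall factor of $(-1)^k$, as the case $k=1$ (where $\det = 1+2x$) shows, indicating the linear factors should be read as $2x-2i+3 = -(2i-3-2x)$.

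The heart of the argument is the location of the roots. Since the $k$ constant rows are independent, $\det(\widetilde{A}_k(x_0)) = 0$ precisely when the first row $(1, x_0, \ldots, x_0^k)$ lies in their span. Letting $\gamma = (\gamma_0, \ldots, \gamma_k)$ span the one-dimensional kernel of the $k \times (k+1)$ matrix $B$ whose rows are the constant rows, the determinant is a scalar multiple of the kernel polynomial $\sum_{j=0}^k \gamma_j x^j$, so its roots are exactly the zeros of this polynomial. It then suffices to show the kernel polynomial is proportional to $\prod_{i=1}^k (2i-3-2x)$, equivalently that the coefficient vector of this product annihilates every constant row. This binomial identity is the step I expect to be the main obstacle. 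My approach would be to encode the rows by generating functions: the recurring off-diagonal entry pattern $2\binom{m}{t+1} - \binom{m}{t}$ has generating function $(2-z)(1+z)^m$, so each row becomes a coefficient extraction from $(2-z)(1+z)^{c-1}$ indexed by the column $c$, and the required orthogonality should reduce to evaluations producing exactly the half-integer roots $x = i - \tfrac{3}{2}$.

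Having matched the degree, all $k$ distinct roots, and the leading coefficient, I would conclude that $\det(\widetilde{A}_k(x))$ equals the claimed product (up to the sign noted above). As a fallback, should the orthogonality identity prove stubborn, I would attempt induction on $k$: Pascal's rule $\binom{k}{r} = \binom{k-1}{r} + \binom{k-1}{r-1}$ links the $\binom{k}{\cdot}$ entries to $\binom{k-1}{\cdot}$ entries, suggesting that subtracting adjacent rows peels off a single linear factor together with a constant that rebuilds $k!$ and the sign $(-1)^{(k^2-k)/2}$, reducing $\widetilde{A}_k(x)$ to $\widetilde{A}_{k-1}(x)$; there the delicate bookkeeping would be the special anti-diagonal entries $-2(j-1)$, which do not follow the generic binomial pattern.
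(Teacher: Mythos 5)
Your overall strategy---treat $\det(\widetilde{A}_k(x))$ as a polynomial of degree at most $k$ and pin it down by its roots and leading coefficient---is exactly the paper's, and your routine ingredients are carried out correctly. But the decisive step, locating the $k$ roots, is never proven, and that step is the entire content of the lemma. Your reformulation is sound as far as it goes: since $M_{1,k+1}\neq 0$ the constant rows are independent, the cofactor vector spans their one-dimensional kernel, and it would suffice to show that the coefficient vector of $\prod_{i=1}^k(2i-3-2x)$ annihilates every constant row. That, however, is only a restatement of what must be shown. You then say the generating-function encoding ``should reduce to evaluations producing exactly the half-integer roots,'' which is precisely the assertion needing proof; worse, you yourself note that the anti-diagonal entries $-2,-4,\ldots,-2k$ do not follow the binomial pattern $(2-z)(1+z)^{c-1}$ on which the encoding relies, so even the setup of the orthogonality computation is incomplete, and the Pascal-rule fallback is likewise unexecuted. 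For comparison, the paper never proves any binomial identity: it shows $x=-\tfrac{1}{2}$ is a root by left-multiplying by an explicit determinant-one matrix that makes the last row vanish at $x=-\tfrac{1}{2}$, and then shows roots propagate upward by $1$ --- if $\rho\notin\{-1,\tfrac{2k-3}{2}\}$ is a root, a left multiplication supported on the first row yields $\widetilde{A}'_k(\rho+1)=\operatorname{diag}(d,1,\ldots,1)\cdot\widetilde{A}_k(\rho)$, whence $\det(\widetilde{A}_k(\rho+1))=0$. Some such mechanism, or an actual proof of your orthogonality identity, is indispensable; without it the proposal does not prove the lemma.

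Two things in your writeup are nonetheless worth keeping. First, your leading-coefficient computation is complete and correct: the staircase of zeros forces the reversal permutation in the Leibniz expansion of $M_{1,k+1}$, giving $M_{1,k+1}=(-1)^{(k^2-k)/2}(-2)^k\,k!$ and hence leading coefficient $(-1)^{1+(k+1)}M_{1,k+1}=(-1)^{(k^2-k)/2}\,2^k\,k!$. Second, your sign alarm is genuine: the lemma as stated is off by $(-1)^k$. At $k=1$ the matrix is $\bigl[\begin{smallmatrix}1 & x \\ -2 & 1\end{smallmatrix}\bigr]$ with determinant $1+2x$, whereas the stated formula gives $-1-2x$. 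The error enters the paper's own proof exactly where the leading coefficient is taken to be the minor on rows $2,\ldots,k+1$ and columns $1,\ldots,k$ with the cofactor sign $(-1)^{1+(k+1)}$ omitted; the correct statement is $\det(\widetilde{A}_k(x))=(-1)^{(k^2-k)/2}\,k!\,\prod_{i=1}^k(2x-2i+3)$. This discrepancy is harmless downstream, since Theorem \ref{thm:tripoly} uses Lemma \ref{lem:matrix} only to conclude $\det(\widetilde{A}_k(k))\neq 0$.
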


\begin{proof}
Let $[A]_i$ denote the $i$th row of a matrix $A$, and let $[A]_{i, j}$ denote the $j$th element of the $i$th row of $A$.

Note that $\det(\widetilde{A}_k(x))$ must be a polynomial with degree at most $k$, as all terms of positive degree are located on the row of the matrix. We will first show that the roots of $\det(\widetilde{A}_k(x))$ are $-\frac{1}{2}, \frac{1}{2}, \ldots, \frac{2k-3}{2}$. 

We now show that $-\frac{1}{2}$ is a root of $\det(\widetilde{A}_k(x))$. We perform a left multiplication on $\widetilde{A}_k(x)$ to generate $\widetilde{B}_k(x)$ as follows:
\begin{equation}\widetilde{B}_k(x) = \begin{bmatrix} 1 & 0 & 0  & \cdots & 0 & 0 & 0 \\ 0 & 1 & 0 &  \cdots & 0 & 0 & 0 \\ 0 & 0 & 1 &  \cdots & 0 & 0 & 0 \\ \vdots & \vdots & \vdots &  \ddots & \vdots & \vdots & \vdots \\ 2k & \left(-\frac{1}{2}\right)^{k-1} & \left(-\frac{1}{2}\right)^{k-2} &  \cdots & \frac{1}{4} & -\frac{1}{2} & 1 \end{bmatrix} \cdot \widetilde{A}_k(x). \label{eq:bmatrix}\end{equation}

However, $[\widetilde{B}_k(x)]_i = [\widetilde{A}_k(x)]_i$ for $1\le i \le k$. By expanding Equation \eqref{eq:bmatrix}, we have that $$[\widetilde{B}_k(x)]_{k+1} = \begin{bmatrix}0& 2kx+k &2kx^2 -k/2 & \cdots & 2kx^k+(-2k)\left(-\frac{1}{2}\right)^k \end{bmatrix}.$$ 

Hence, the elements of $[\widetilde{B}_k(-\frac{1}{2})]_{k+1}$ must all be zero, so $\det(\widetilde{B}_k(-\frac{1}{2})) = 0$. Thus, Equation \eqref{eq:bmatrix} gives $\det(\widetilde{A}_k(-\frac{1}{2})) = 0$, and $-\frac{1}{2}$ must be a root of $\det(\widetilde{A}_k(x))$.

Assume $\rho=a-\frac{1}{2}$ is a root of $\det(\widetilde{A}_k(x))$ for $0 \le a < k-1$. We will show that $\rho+1$ is a root. Completing this induction proves that $-\frac{1}{2}, \frac{1}{2}, \ldots, \frac{2k-3}{2}$ are the $k$ roots of $\det(\widetilde{A}_k(x))$, as we have already proven the base case of $a=0$.

We first perform a left multiplication on $\widetilde{A}_k(x)$ to produce the matrix $\widetilde{A}'_k(x)$.
\begin{equation} \widetilde{A}_k'(x) = \begin{bmatrix} (2k-3-2\rho)(\rho+1)^{-k+1} & 1 & (\rho+1)^{-1} & \cdots & (\rho+1)^{-k+1} \\ 0 & 1 & 0 & \cdots & 0 \\ 0 & 0 & 1 & \cdots & 0 \\ \vdots & \vdots & \vdots & \ddots & \vdots \\ 0 & 0 & 0& \cdots & 1 \end{bmatrix} \cdot \widetilde{A}_k(x). \label{eq:inmatrix}\end{equation}

We now expand Equation \eqref{eq:inmatrix} to determine the first row of our new matrix, such that for $0 \le i \le k$,  $$[\widetilde{A}_k'(x)]_{1, i+1} =-(\rho+1)^{-k+1}(\rho+2)\rho^i+(2\rho-2k+3)(\rho+1)^{-k+1}((\rho+1)^i-x^i).$$

Let $d = -(\rho+1)^{-k+1}(\rho+2)$. Hence, for $0 \le i \le k$, $$[\widetilde{A}'_k(x)]_{1, i+1} = -(\rho+1)^{-k+1}(\rho+2)\rho^i = d\cdot \rho^i.$$ 

However, Equation \eqref{eq:inmatrix} implies that $[\widetilde{A}'_k(x)]_i = [\widetilde{A}_k(x)]_i$ for $2\le i \le k+1$, thus we have $\widetilde{A}'_k(\rho+1) = \operatorname{diag}(d, 1, 1, \ldots, 1) \cdot \widetilde{A}_k(\rho)$, where $\operatorname{diag}(d, 1, 1, \ldots, 1)$ is defined as the $(k+1) \times (k+1)$ diagonal matrix with elements $(d, 1, 1, \ldots, 1)$.

By taking the determinants of this equation, we conclude that
$$\det(\widetilde{A}'_k(\rho+1)) = d \cdot \det(\widetilde{A}_k(\rho)) = -((\rho+1)^{-k+1})((\rho+1)+1) \det(\widetilde{A}_k(\rho)) = 0.$$

Plugging this into Equation \eqref{eq:inmatrix}, we have that $$\det(\widetilde{A}_k(\rho+1)) \cdot (2\rho+3-2k)(\rho+1)^{-k+1} = \det(\widetilde{A}'_k(\rho+1)) = 0.$$

Thus, because $\rho \neq \frac{2k-3}{2}$ and $\rho \neq -1$, we have that $\det(\widetilde{A}_k(\rho+1))=0$, in other words, $\rho+1$ is a root of $\widetilde{A}_k(x)$. Note that this argument fails when $\rho = \frac{2k-3}{2}$, limiting our induction.

Finally, we have found the $k$ roots of $\det(\widetilde{A}_k(x))$. We now express $\det(\widetilde{A}_k(x)) = a \cdot \displaystyle\prod_{i=1}^k (x-i+\frac{3}{2}).$ The leading coefficient of this polynomial, $a$, is obtained by computing the determinant of the matrix formed by rows $2$ to $k+1$ and columns $1$ to $k$ of $\widetilde{A}_k(x)$. Thus, we have that $$a  = (-1)^{(k^2+k)/2}(2k)!!.$$ 
Finally, we expand and conclude that $$\det(\widetilde{A}_k(x)) = (-1)^{(k^2-k)/2} \cdot k! \cdot \displaystyle\prod_{i=1}^k (2i-3-2x).$$
\end{proof}

We now use the aforementioned lemma to prove the polynomiality of a particular ratio of Naruse-Newton coefficients of ribbons of staircase shape.

\begin{theorem} \label{thm:tripoly}
Let $C_k(I)$ represent the $(k+1)$-th Naruse-Newton coefficient of a non-empty set of positive integers $I$, and $C_{j, k}(I) = \frac{C_j(I)}{C_k(I)}$. For every nonnegative integer $i$, there exists a unique polynomial $P_i(t) \in \mathbb{Q}[t]$ of degree at most $i$ such that $P_i(a) = C_{a-i, a}(\{1, 3, \ldots, 2a+1\})$.
\end{theorem}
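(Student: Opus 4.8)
The plan is to reformulate the statement in terms of a single doubly-indexed quantity and extract a clean recurrence. Writing $C^{(a)}_j := C_j(\{1, 3, \ldots, 2a+1\})$ and $R_i(a) := C_{a-i,\,a}(\{1,3,\ldots,2a+1\}) = C^{(a)}_{a-i}/C^{(a)}_a$, the theorem asserts exactly that, for each fixed $i$, the function $a \mapsto R_i(a)$ (defined for all $a \ge i$) agrees with a polynomial of degree at most $i$. From Proposition \ref{prop:recurrence}(2), taking the difference of the formulas for $C^{(b)}_a$ and $C^{(b)}_{a+1}$ telescopes to the three-term recurrence $(2a+1)\,C^{(b)}_a = C^{(b)}_{a+1} + (2b+1)!!\,C^{(b-1)}_{a-1}$ (with the convention $C^{(b)}_{b+1}=0$), which also appears in the proof of Proposition \ref{prop:staircase}. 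Specializing $b=a$ at the top index gives $C^{(a)}_a = (2a-1)!!\,C^{(a-1)}_{a-1}$, so the top coefficients of consecutive staircases have a known ratio. Dividing the three-term recurrence (at $b=a$, index $a-i$) by $C^{(a)}_a$ and substituting this ratio yields the central identity
\begin{equation}
(2a-2i+1)\,R_i(a) = (2a+1)\,R_i(a-1) + R_{i-1}(a), \qquad 1 \le i \le a-1, \label{eq:ratiorec}
\end{equation}
together with the boundary relations $R_0(a) = 1$ and $R_a(a) = R_{a-1}(a)$, the latter being exactly $C_0 = C_1$ from Proposition \ref{prop:recurrence}(3).

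I would then induct on $i$. The base case $R_0 \equiv 1 = P_0$ is immediate. For the inductive step, assume $R_{i-1}(a) = P_{i-1}(a)$ for all $a \ge i-1$ with $\deg P_{i-1} \le i-1$. By \eqref{eq:ratiorec}, a degree-$\le i$ polynomial $P_i$ will satisfy $P_i \equiv R_i$ on all $a \ge i$ provided (a) it satisfies the polynomial identity $(2t-2i+1)P_i(t) - (2t+1)P_i(t-1) = P_{i-1}(t)$, and (b) it matches the single boundary value $P_i(i) = P_{i-1}(i)$; for then $P_i$ and $R_i$ agree at $a=i$ (using $R_i(i) = R_{i-1}(i) = P_{i-1}(i)$) and obey the same recurrence in $a$, whose coefficient $2a-2i+1$ is positive for $a \ge i$, so equality propagates for all $a \ge i$. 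Uniqueness of $P_i$ is then free: two degree-$\le i$ polynomials agreeing with $R_i$ on the infinite set $\{a \ge i\}$ coincide.

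The heart of the argument is the existence of such a $P_i$, which is where Lemma \ref{lem:matrix} enters. Writing $P_i(t) = \sum_{j=0}^i c_j t^j$, the identity in (a) has degree $\le i-1$ on both sides (the $t^{i+1}$ and $t^i$ terms cancel identically, as one checks directly), giving $i$ linear equations in the $c_j$ by matching coefficients of $1, t, \ldots, t^{i-1}$; the boundary condition (b), namely $\sum_j c_j\, i^j = P_{i-1}(i)$, supplies one more. This is a square $(i+1)\times(i+1)$ system whose coefficient matrix is, after the obvious identification of rows with equations and columns with the unknowns $c_j$, precisely $\widetilde{A}_i(x)$ evaluated at $x=i$: the first row $(1, x, \ldots, x^i)$ is the evaluation functional $P_i \mapsto P_i(x)$ encoding the boundary condition, and the binomial entries of the remaining rows are produced by expanding $(t-1)^j$ and collecting like powers in the coefficient-matching equations. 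By Lemma \ref{lem:matrix}, $\det \widetilde{A}_i(x) = (-1)^{(i^2-i)/2}\, i!\, \prod_{m=1}^i (2m-3-2x)$, whose roots are $-\tfrac12, \tfrac12, \ldots, \tfrac{2i-3}{2}$; since $i > \tfrac{2i-3}{2}$, the point $x=i$ is not a root, the determinant is nonzero, and the system has a unique solution. This simultaneously furnishes $P_i$ and closes the induction.

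The main obstacle is exactly this nonsingularity, i.e. Lemma \ref{lem:matrix} itself, together with the (routine but careful) bookkeeping that identifies the coefficient matrix with $\widetilde{A}_i(i)$ — in particular, confirming that the binomial coefficients appearing in $\widetilde{A}_i(x)$ are precisely those generated by expanding $(t-1)^j$ in $(2t-2i+1)P_i(t)-(2t+1)P_i(t-1)=P_{i-1}(t)$. As a conceptual cross-check independent of the matrix computation, one can verify that the operator $L_i(P)(t) = (2t-2i+1)P(t)-(2t+1)P(t-1)$ maps $\mathbb{Q}[t]^{\le i}$ onto $\mathbb{Q}[t]^{\le i-1}$ with one-dimensional kernel spanned by $\prod_{k=0}^{i-1}(t-k+\tfrac12)$ (from a periodicity argument applied to the ratio $P(t)/P(t-1) = (t+\tfrac12)/(t-i+\tfrac12)$): surjectivity produces a solution of (a), and nonvanishing of this kernel polynomial at $t=i$ lets the boundary condition (b) pin down the free parameter uniquely.
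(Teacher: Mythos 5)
Your proposal is correct and takes essentially the same route as the paper's own proof: the same functional equation $(2a-2i+1)Q(a)-(2a+1)Q(a-1)=P_{i-1}(a)$ extracted from Proposition \ref{prop:recurrence}, the same boundary condition $P_i(i)=P_{i-1}(i)$ from $C_0=C_1$, the same induction on $i$, the same $(i+1)\times(i+1)$ linear system obtained by matching coefficients, and the same appeal to Lemma \ref{lem:matrix} at $x=i$ (which avoids the half-integer roots $-\tfrac12,\tfrac12,\ldots,\tfrac{2i-3}{2}$) to get nonsingularity. Your additions — the explicit propagation argument showing that the recurrence plus the single boundary value force $P_i=R_i$ for all $a\ge i$, and the kernel cross-check for the operator $L_i$ — are details the paper leaves implicit, and they strengthen rather than change the argument.
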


\begin{proof}
We proceed by strong induction on $k$. The base case is $k=0$, with $P_0(a) = C_{a, a}(\{1, \ldots, 2a+1\}) = 1$. 

Fix $k$ to be a positive integer. Assume that there exist unique polynomials $P_0(t), P_1(t), \ldots, P_{k-1}(t) \in \mathbb{Q}[t]$ such that $P_i(a) = C_{a-i, a}(\{1, 3, \ldots, 2a+1\})$ for $0 \le i \le k-1$ and $P_i(a)$ is of degree at most $i$. Then, we will prove that there exists a polynomial $P_k(t) \in \mathbb{Q}[t]$ of degree at most $k$ such that $P_k(a) = C_{a-k, a}(\{1, 3, \ldots, 2a+1\})$.

Take the polynomial $Q_k(a) = \displaystyle\sum_{i=0}^k c_i a^i$ for unknown $c_i$, and let $P_{k-1}(a)= \displaystyle\sum_{i=0}^{k-1}d_i a^i$ for known rational $d_i$. We now show that $C_{a-k, a}(\{1, \ldots, 2a+1\})$ is polynomial in $a$ and has degree at most $k$ by proving the existence of such a polynomial. In other words, if we prove that there exists a unique rational sequence $(c_i)_{i=0}^k$ such that $Q_k(a) = C_{a-k, a}(\{1, \ldots, 2a+1\})$ and $\deg(Q_k(a)) \le k$, then our proof is complete. 

The polynomial $Q_k(t)$ is characterized by the two equations $Q_k(a) \cdot (2a-2k+1)-Q_k(a-1) \cdot (2a+1) = P_{k-1}(a)$ and $Q_k(k) = P_{k-1}(k)$, generated by substituting $P_{k-1}(t)$ and $Q_k(t)$ in the second and third equations of Proposition \ref{prop:recurrence}. Hence, these two equations must characterize the sequence $(c_i)_{i=0}^k$, and proving there exists a possible sequence with nonzero $c_k$ that satisfies both equations finishes our proof.

The first of these two aforementioned equations is equivalent to $$(c_ka^k+\cdots+c_0)(2a-2k+1)-(c_k(a-1)^k+\cdots+c_0)(2a+1) = d_{k-1}a^{k-1}+\cdots+d_0.$$

The second of these is equivalent to $$k^kc_k+\cdots+c_0 = d_{k-1}k^{k-1}+\cdots+d_0.$$

By equating the coefficients of terms of degree $i$ for $0 \le i < k$ in the first equation, we are able to create $k+1$ equations regarding $(c_i)_{i=0}^k$. We define $A_k$ to be the $(k+1)\times (k+1)$ matrix such that $$A_k\begin{bmatrix} c_0 \\ c_1 \\ \vdots \\ c_k \end{bmatrix} = \begin{bmatrix} d_{k-1}k^{k-1}+\cdots+d_0 \\ d_{k-1} \\ \vdots \\ d_0 \end{bmatrix}.$$

By expanding the aforementioned equations, we generate the terms of $A_k$: 
$$A_k = \begin{bmatrix} 1 & k & k^2 & \cdots & k^{k-2} & k^{k-1} & k^k \\ 0 & 0 & 0 &\cdots & 0 & -2 & -2{k \choose 2} + k \\ 0 & 0 & 0 & \cdots & -4 & -2{k-1 \choose 2} + k & 2{k \choose 3} - k/2\\ 0 & 0 & 0 & \cdots & -2{k-2 \choose 2} + k & 2 {k-1 \choose 3}-k/2 & -2{k \choose 4}+k/4 \\ \vdots & \vdots & \vdots & \ddots & \vdots &\vdots & \vdots    \\ -2k & k & -k/2 & \cdots & \frac{k}{(-2)^{k-3}} & \frac{k}{(-2)^{k-2}} & \frac{k}{(-2)^{k-1}} \end{bmatrix}.$$

Proving that there exists a unique $(c_i)_{i=0}^k$ is equivalent to proving $\det(A_k) \neq 0$.

We refer to our Lemma \ref{lem:matrix}, proven above, to complete our proof. Since $A_k = \widetilde{A}_k(k)$, so $\det(A_k) = (-1)^{(k^2-k)/2} \cdot k! \cdot \displaystyle\prod_{i=1}^k (2i-3-2k) \neq 0$. Thus, the sequence $(c_i)_{i=0}^k$ is determined uniquely. Furthermore, this implies that $\deg(Q_k(t)) \le k$, so our strong induction, and thereby our proof, is complete.
\end{proof}

\bigskip
\section*{Acknowledgements}
I thank Pakawut Jiradilok for his mentorship and guidance. I am very grateful for his many insights from his work and suggestions for this paper. I would also like to thank the MIT PRIMES-USA program and every individual involved with them for making this research possible, and for hosting and allowing me to present at the Tenth Annual Fall Term PRIMES Conference. In particular, I would like to thank the program director, Slava Gerovitch, the head mentor, Tanya Khovanova, the assistant head mentor, Alex Vitanov, and the chief research advisor, Pavel Etingof. 

\section*{Appendix A: Table of Naruse-Newton Coefficients for $I \subseteq [7]$}
In this appendix, we list computed values of Naruse-Newton coefficients for $127$ descent sets, namely the nonempty subsets of $\{1, 2, 3, 4, 5, 6, 7\}$. A Java program to compute the Naruse-Newton coefficients of a provided descent set can be found on my GitHub page, at \newline \url{https://github.com/andrewcai31/coefficient-calculator}.\vspace{0.3 cm}

\noindent \begin{longtable}{|p{1cm}|p{0.5cm}|p{1cm}|p{1cm}|p{1cm}|p{1cm}|p{1cm}|p{1cm}|p{1cm}|}
\hline
 \multicolumn{9}{|c|}{$|I| = 1$} \\
 \hline 
$I$ & $s$ & $C_0$ & $C_1$ & $C_2$ & $C_3$ & $C_4$ & $C_5$ & $C_6$ \\
\hline
\endfirsthead
 \hline 
$I$ & $s$ & $C_0$ & $C_1$ & $C_2$ & $C_3$ & $C_4$ & $C_5$ & $C_6$ \\
\hline
\endhead
\hline
\endfoot
\hline
\endlastfoot
$\{1\}$ & $0$ & $1$ & & & & & & \\
$\{2\}$ & $1$ & $1$ & $1$ & & & & & \\
$\{3\}$ & $2$ & $1$ & $1$ & $2$ & & & & \\
$\{4\}$ & $3$ & $1$ & $1$ & $2$  & $6$ & & & \\
$\{5\}$ & $4$ & $1$ & $1$ & $2$ & $6$ &$24$ & & \\
$\{6\}$ & $5$ & $1$ & $1$ & $2$ & $6$ &$24$ & $120$ & \\
$\{7\}$ & $6$ & $1$ & $1$ & $2$ & $6$ &$24$ & $120$ & $720$\\
\end{longtable}
\noindent \begin{longtable}{|p{1.4 cm}|p{0.5 cm}|p{1.17 cm}|p{1.17cm}|p{1.17cm}|p{1.17cm}|p{1.17cm}|p{1.17cm}|}
\hline
\multicolumn{8}{|c|}{$|I| = 2$} \\
\hline
$I$ & $s$ & $C_0$ & $C_1$ & $C_2$ & $C_3$ & $C_4$ & $C_5$ \\
\hline
\endfirsthead
\hline
$I$ & $s$ & $C_0$ & $C_1$ & $C_2$ & $C_3$ & $C_4$ & $C_5$ \\
\hline
\endhead
\hline
\endfoot
\hline
\endlastfoot
$\{1, 2\}$ & $0$ & $1$ & & & & &  \\
$\{1, 3\}$ & $1$ & $1$ & $1$ & & & & \\
$\{1, 4\}$ & $2$ & $1$ & $1$ & $2$& & & \\
$\{1, 5\}$ & $3$ & $1$ & $1$ & $2$& $6$& &  \\
$\{1, 6\}$ & $4$ & $1$ & $1$ & $2$& $6$&$24$ &  \\
$\{1, 7\}$ & $5$ & $1$ & $1$ & $2$& $6$&$24$ & $120$ \\
\hline
$\{2, 3\}$ & $1$ & $4$ & $2$ & & & & \\
$\{2, 4\}$ & $2$ & $5$ & $5$ & $3$& & & \\
$\{2, 5\}$ & $3$ & $6$ & $6$ & $12$& $8$& &  \\
$\{2, 6\}$ & $4$ & $7$ & $7$ & $14$& $42$&$30$ &  \\
$\{2, 7\}$ & $5$ & $8$ & $8$ & $16$& $48$&$192$ & $144$ \\
\hline
$\{3, 4\}$ & $2$ & $18$ & $12$ & $12$& & & \\
$\{3, 5\}$ & $3$ & $27$ & $27$ & $21$& $24$& &  \\
$\{3, 6\}$ & $4$ & $38$ & $38$ & $76$& $64$&$80$ &  \\
$\{3, 7\}$ & $5$ & $51$ & $51$ & $102$& $306$&$270$ & $360$ \\
\hline
$\{4, 5\}$ & $3$ & $96$ & $72$ & $96$& $144$& &  \\
$\{4, 6\}$ & $4$ & $168$ & $168$ & $144$& $216$&$360$ &  \\
$\{4, 7\}$ & $5$ & $272$ & $272$ & $544$& $496$&$800$ & $1440$ \\
\hline
$\{5, 6\}$ & $4$ & $600$ & $480$ & $720$& $1440$&$2880$ &  \\
$\{5, 7\}$ & $5$ & $1200$ & $1200$ & $1080$& $1800$&$3960$ & $8640$ \\
\hline
$\{6, 7\}$ & $5$ & $4320$ & $3600$ & $5760$& $12960$&$34560$ & $86400$ \\
\end{longtable}

\noindent \begin{longtable}{|p{1.4cm}|p{0.5cm}|p{1.5cm}|p{1.5cm}|p{1.5cm}|p{1.5cm}|p{1.5cm}|}
\hline
 \multicolumn{7}{|c|}{$|I| = 3$} \\
 \hline 
 $I$ & $s$ & $C_0$ & $C_1$ & $C_2$ & $C_3$ & $C_4$\\
\hline
\endfirsthead
 \hline 
$I$ & $s$ & $C_0$ & $C_1$ & $C_2$ & $C_3$ & $C_4$ \\
\hline
\endhead
\hline
\endfoot
\hline
\endlastfoot
$\{1, 2, 3\}$ & $0$ & $1$ & & & &  \\
$\{1, 2, 4\}$ & $1$ & $1$ & $1$& & &  \\
$\{1, 2, 5\}$ & $2$ & $1$ & $1$&$2$ & &  \\
$\{1, 2, 6\}$ & $3$ & $1$ & $1$&$2$ &$6$ &  \\
$\{1, 2, 7\}$ & $4$ & $1$ & $1$&$2$ &$6$ & $24$ \\
\hline
$\{1, 3, 4\}$ & $1$ & $5$ & $2$& & &  \\
$\{1, 3, 5\}$ & $2$ & $6$ & $6$&$3$ & &  \\
$\{1, 3, 6\}$ & $3$ & $7$ & $7$&$14$ &$8$ &  \\
$\{1, 3, 7\}$ & $4$ & $8$ & $8$&$16$ &$48$ & $30$ \\
\hline
$\{1, 4, 5\}$ & $2$ & $22$ & $14$&$12$ & &  \\
$\{1,4, 6\}$ & $3$ & $32$ & $32$&$24$ &$24$ &  \\
$\{1, 4, 7\}$ & $4$ & $44$ & $44$&$88$ &$72$ & $80$ \\
\hline
$\{1,5, 6\}$ & $3$ & $114$ & $84$&$108$ &$144$ &  \\
$\{1, 5, 7\}$ & $4$ & $195$ & $195$&$165$ &$240$ & $360$ \\
\hline
$\{1, 6, 7\}$ & $4$ & $696$ & $552$&$816$ &$1584$ & $2880$ \\
\hline
$\{2, 3, 4\}$ & $1$ & $18$ & $6$& & &  \\
$\{2, 3, 5\}$ & $2$ & $22$ & $22$&$8$ & &  \\
$\{2, 3, 6\}$ & $3$ & $26$ & $26$&$52$ &$20$ &  \\
$\{2, 3, 7\}$ & $4$ & $30$ & $30$&$60$ &$180$ & $72$ \\
$\{2, 4, 5\}$ & $2$ & $128$ & $56$&$24$ & &  \\
$\{2, 4, 6\}$ & $3$ & $183$ & $183$&$99$ &$45$ &  \\
$\{2, 4, 7\}$ & $4$ & $248$ & $248$&$496$ &$304$ & $144$ \\
$\{2, 5, 6\}$ & $3$ & $800$ & $520$&$480$ &$240$ &  \\
$\{2, 5, 7\}$ & $4$ & $1352$ & $1352$&$1032$ &$1104$ & $576$ \\
$\{2, 6, 7\}$ & $4$ & $5616$& $4176$&$5472$ &$7776$ & $4320$ \\
\hline
$\{3, 4, 5\}$ & $2$ & $432$ & $216$&$144$ & &  \\
$\{3, 4, 6\}$ & $3$ & $624$ & $624$&$336$ &$240$ &  \\
$\{3, 4, 7\}$ & $4$ & $852$& $852$&$1704$ &$960$ & $720$ \\
$\{3, 5, 6\}$ & $3$ & $4200$ & $1920$&$1200$ &$960$ &  \\
$\{3, 5, 7\}$ & $4$ & $6975$& $6975$&$3915$ &$2565$ & $2160$ \\
$\{3, 6, 7\}$ & $4$ & $35640$& $23400$&$22320$ &$15840$ & $14400$ \\
\hline
$\{4, 5, 6\}$ & $3$ & $14400$ & $8640$&$8640$ &$8640$ &  \\
$\{4, 5, 7\}$ & $4$ & $24000$& $24000$&$15360$ &$16320$ & $17280$ \\
$\{4, 6, 7\}$ & $4$ & $184320$& $86400$&$63360$ &$74880$ & $86400$ \\
\hline
$\{5, 6, 7\}$ & $4$ & $648000$& $432000$&$518400$ &$777600$ & $1036800$ \\
\end{longtable}
\noindent \begin{longtable}{|p{2.2cm}|p{0.5cm}|p{1.8cm}|p{1.8cm}|p{1.8cm}|p{1.8cm}|}
\hline
 \multicolumn{6}{|c|}{$|I| = 4$} \\
 \hline 
$I$ & $s$ & $C_0$ & $C_1$ & $C_2$ & $C_3$  \\
\hline
\endfirsthead
 \hline 
$I$ & $s$ & $C_0$ & $C_1$ & $C_2$ & $C_3$ \\
\hline
\endhead
\hline
\endfoot
\hline
\endlastfoot
$\{1, 2, 3, 4\}$ & $0$ & $1$ & & &  \\
$\{1, 2, 3, 5\}$ & $1$ & $1$ & $1$& &  \\
$\{1, 2, 3, 6\}$ & $2$ & $1$ &$1$ & $2$&  \\
$\{1, 2, 3, 7\}$ & $3$ & $1$ &$1$ & $2$& $6$ \\
$\{1, 2, 3, 5\}$ & $1$ & $1$ & $1$& &  \\
$\{1, 2, 4, 5\}$ & $1$ & $6$ & $2$& &  \\
$\{1, 2, 4, 6\}$ & $2$ & $7$ &$7$ & $3$&  \\
$\{1, 2, 4, 7\}$ & $3$ & $8$ &$8$ & $16$& $8$ \\
$\{1, 2, 5, 6\}$ & $2$ & $26$ &$16$ & $12$&  \\
$\{1, 2, 5, 7\}$ & $3$ & $37$ &$37$ & $27$& $24$ \\
$\{1, 2, 6, 7\}$ & $3$ & $132$ &$96$ & $120$& $144$ \\
\hline
$\{1, 3, 4, 5\}$ & $1$ & $27$ & $6$& &  \\
$\{1, 3, 4, 6\}$ & $2$ & $32$ &$32$ & $8$&  \\
$\{1, 3, 4, 7\}$ & $3$ & $37$ &$37$ & $74$& $20$ \\
$\{1, 3, 5, 6\}$ & $2$ & $183$ &$78$ & $24$&  \\
$\{1, 3, 5, 7\}$ & $3$ & $255$ &$255$ & $135$& $45$ \\
$\{1, 3, 6, 7\}$ & $3$ & $1086$ &$702$ & $636$& $240$ \\
\hline
$\{1, 4, 5, 6\}$ & $2$ & $624$ &$288$ & $144$&  \\
$\{1, 4, 5, 7\}$ & $3$ & $880$ &$880$ & $440$& $240$ \\
$\{1, 4, 6, 7\}$ & $3$ & $5792$ &$2624$ & $1536$& $960$ \\
\hline
$\{1, 5, 6, 7\}$ & $3$ & $19800$ &$11520$ & $10800$& $8640$ \\
\hline
$\{2, 3, 4, 5\}$ & $1$ & $96$ & $24$& &  \\
$\{2, 3, 4, 6\}$ & $2$ & $114$ &$114$ & $30$&  \\
$\{2, 3, 4, 7\}$ & $3$ & $132$ &$132$ & $264$& $72$ \\
$\{2, 3, 5, 6\}$ & $2$ & $800$ &$280$ & $80$&  \\
$\{2, 3, 5, 7\}$ & $3$ & $1086$ &$1086$ & $486$& $144$ \\
$\{2, 3, 6, 7\}$ & $3$ & $4752$ &$2952$ & $2304$& $720$ \\
$\{2, 4, 5, 6\}$ & $2$ & $4200$ &$1080$ & $360$&  \\
$\{2, 4, 5, 7\}$ & $3$ & $5792$ &$5792$ & $1664$& $576$ \\
$\{2, 4, 6, 7\}$ & $3$ & $39168$ &$16848$ & $5904$& $2160$ \\
$\{2, 5, 6, 7\}$ & $3$ & $159840$ &$76320$ & $43200$& $17280$ \\
\hline
$\{3, 4, 5, 6\}$ & $2$ & $14400$ &$5760$ & $2880$&  \\
$\{3, 4, 5, 7\}$ & $3$ & $19800$ &$19800$ & $8280$& $4320$ \\
$\{3, 4, 6, 7\}$ & $3$ & $159840$ &$57600$ & $25920$& $14400$ \\
$\{3, 5, 6, 7\}$ & $3$ & $972000$ &$270000$ & $140400$& $86400$ \\
\hline
$\{4, 5, 6, 7\}$ & $3$ & $3456000$ &$1728000$ & $1382400$& $1036800$ \\
\end{longtable}
\noindent \begin{longtable}{|p{2.3cm}|p{0.5cm}|p{2.5cm}|p{2.5cm}|p{2.5cm}|}
\hline
 \multicolumn{5}{|c|}{$|I| = 5$} \\
 \hline 
$I$ & $s$ & $C_0$ & $C_1$ & $C_2$  \\
\hline
\endfirsthead
 \hline 
$I$ & $s$ & $C_0$ & $C_1$ & $C_2$  \\
\hline
\endhead
\hline
\endfoot
\hline
\endlastfoot
$\{1, 2, 3, 4, 5\}$ & $0$ & $1$ & &   \\
$\{1, 2, 3, 4, 6\}$ & $1$ & $1$ & $1$&   \\
$\{1, 2, 3, 4, 7\}$ & $2$ & $1$ & $1$& $2$  \\
$\{1, 2, 3, 5, 6\}$ & $1$ & $7$ & $2$&   \\
$\{1, 2, 3, 5, 7\}$ & $2$ & $8$ & $8$&  $3$ \\
$\{1, 2, 3, 6, 7\}$ & $2$ & $30$ & $18$&  $12$ \\
$\{1, 2, 4, 5, 6\}$ & $1$ & $38$ & $6$&   \\
$\{1, 2, 4, 5, 7\}$ & $2$ & $44$ & $44$&  $8$ \\
$\{1, 2, 4, 6, 7\}$ & $2$ & $248$ & $104$&  $24$ \\
$\{1, 2, 5, 6, 7\}$ & $2$ & $852$ & $372$&  $144$ \\
\hline
$\{1, 3, 4, 5, 6\}$ & $1$ & $168$ & $24$&   \\
$\{1, 3, 4, 5, 7\}$ & $2$ & $195$ & $195$&  $30$ \\
$\{1, 3, 4, 6, 7\}$ & $2$ & $1352$ & $464$&  $80$ \\
$\{1, 3, 5, 6, 7\}$ & $2$ & $6975$ & $1710$&  $360$ \\
\hline
$\{1, 4, 5, 6, 7\}$ & $2$ & $24000$ & $8640$&  $2880$ \\
\hline
$\{2, 3, 4, 5, 6\}$ & $1$ & $600$ & $120$&   \\
$\{2, 3, 4, 5, 7\}$ & $2$ & $696$ & $696$&  $144$ \\
$\{2, 3, 4, 6, 7\}$ & $2$ & $5616$ & $1656$&  $360$ \\
$\{2, 3, 5, 6, 7\}$ & $2$ & $35640$ & $6120$&  $1440$ \\
$\{2, 4, 5, 6, 7\}$ & $2$ & $184320$ & $31680$&  $8640$ \\
\hline
$\{3, 4, 5, 6, 7\}$ & $2$ & $648000$ & $216000$&  $86400$ \\
\end{longtable}
\noindent \begin{longtable}{|p{3.2cm}|p{0.5cm}|p{3.5cm}|p{3.5cm}|}
\hline
 \multicolumn{4}{|c|}{$|I| = 6$} \\
 \hline 
$I$ & $s$ & $C_0$ & $C_1$ \\
\hline
\endfirsthead
 \hline 
$I$ & $s$ & $C_0$ & $C_1$ \\
\hline
\endhead
\hline
\endfoot
\hline
\endlastfoot
$\{1, 2, 3, 4, 5, 6\}$ & $0$ & $1$ &   \\
$\{1, 2, 3, 4, 5, 7\}$ & $1$ & $1$ & $1$  \\
$\{1, 2, 3, 4, 6, 7\}$ & $1$ & $8$ & $2$  \\
$\{1, 2, 3, 5, 6, 7\}$ & $1$ & $51$ & $6$  \\
$\{1, 2, 4, 5, 6, 7\}$ & $1$ & $272$ & $24$  \\
$\{1, 3, 4, 5, 6, 7\}$ & $1$ & $1200$ & $120$  \\
$\{2, 3, 4, 5, 6, 7\}$ & $1$ & $4320$ & $720$  \\
\end{longtable}
\noindent \begin{longtable}{|p{4.6cm}|p{0.5cm}|p{6cm}|}
\hline
 \multicolumn{3}{|c|}{$|I| = 7$} \\
 \hline 
$I$ & $s$ & $C_0$ \\
\hline
\endfirsthead
 \hline 
$I$ & $s$ & $C_0$  \\
\hline
\endhead
\hline
\endfoot
\hline
\endlastfoot
$\{1, 2, 3, 4, 5, 6, 7\}$ & $0$ & $1$  \\
\end{longtable}

\printbibliography
\end{document}